\numberwithin{equation}{section}
\newtheorem{theorem}{Theorem}[section]
\newtheorem{lemma}[theorem]{Lemma}
\newtheorem{remark}[theorem]{Remark}
\newtheorem{proposition}[theorem]{Proposition}
\def\esssup{\mathop{\rm ess\, sup}}
\begin{document}


\def\esssup{\mathop{\rm ess\, sup}}

\title[Global Strong Solutions to the VPB with Soft Potential in Bounded Domain]
{Global Strong Solutions to the Vlasov-Poisson-Boltzmann System with Soft Potential in a Bounded Domain}

\author[Fucai Li and Yichun Wang]{Fucai Li, Yichun Wang\\
\small Department of Mathematics, Nanjing University\\
\small Nanjing, 210093, P.R. China}

\address{Department of Mathematics, Nanjing University, Nanjing
 210093, P. R. China}
\email{fli@nju.edu.cn, ycwang@smail.nju.edu.cn}

\date{}

\begin{abstract}
Boundary effects are crucial for dynamics of dilute charged gases governed by the Vlasov-Poisson-Boltzmann (VPB) system. In this paper, we study the existence and regularity of solutions to the VPB system with soft potential in a bounded convex domain with in-flow boundary condition. We establish the existence of strong solutions in the time interval $[0,T]$ for an arbitrary given $T>0$ when the initial distribution function is near an absolute Maxwellian. Our contribution is based on a new weighted energy estimate in some $W^{1,p}$ space and $L_x^3 L_v^{1+}$ space for soft potential. By using the classical $L^2$--$L^\infty$ method and bootstrap argument, we extend the local solutions from small time scale to large time scale.
\end{abstract}

\keywords{Vlasov-Poisson-Boltzmann system, in-flow boundary condition, $L^2$--$L^\infty$ method}

\subjclass[2010]{35D35, 35Q20, 35Q83, 82C22}
 \maketitle

\section{Introduction}
Vlasov-Boltzmann   equation is a classical model describing the dynamics and collision processes of dilute charged particles with a given field.
Subjected to a field $E$, the evolution of the distribution function  $F(t, x, v)$ of the particles   is governed by
\begin{align}\label{Vlasov-Boltzmann}
\partial_t F + v \cdot \nabla_x F + E \cdot \nabla_v F = Q(F, F).
\end{align}
For the elastic collisions, the Boltzmann collision operator $Q(\cdot, \cdot)$  in \eqref{Vlasov-Boltzmann} is given as the following non-symmetric form:
\begin{align*}
Q(F_1, F_2)=& \int_{\mathbb{R}^3\times \mathbb{S}^2} |u-v|^\gamma b_0(\theta)[F_1(u')F_2(v')-F_1(u)F_2(v)]\mathrm{d}u\mathrm{d}\omega\\
=& Q_{\mathrm{gain}}(F_1, F_2)-Q_{\mathrm{loss}}(F_1, F_2),
\end{align*}
where $\gamma$ equals to $1-\frac{4}{s}$ with inverse power $1<s<4$, and  $(u, v)$ and $(u', v')$ denote the velocities of the particles before and after the collision, which satisfy $u'=u-[(u-v)\cdot \omega]\omega$, $v'=v-[(v-u)\cdot \omega]\omega$ and $|u|^2+|v|^2 = |u'|^2+|v'|^2$.
Throughout this paper, we assume
\begin{align}\label{soft}
\gamma \in (-3, 0), \,\, 0 < b_0(\theta)\leq C \cos \theta,
\end{align}
where $\cos \theta = \omega \cdot \frac{u-v}{|u-v|}$. We remark that if (\ref{soft}) holds, then the collision kernel
in $Q(\cdot, \cdot)$  is called soft potential with Grad's angular cutoff.

It is well-known that a global Maxwellian $\mu$ satisfies $Q(\mu, \mu)=0$, where $\mu(v):=e^{-\frac{|v|^2}{2}}$.

It is important for physical applications to consider the  Vlasov-Boltzmann equation \eqref{Vlasov-Boltzmann} coupled with a  self-consistent electric field. In this case, we obtain the so-called  Vlasov-Poisson-Boltzmann (VPB) system, namely,
\begin{align}\label{vp-1}
&\partial_t F + v \cdot \nabla_x F - \nabla_x \phi \cdot \nabla_v F = Q(F, F),
\end{align}
where $-\nabla_x \phi=E$ in \eqref{Vlasov-Boltzmann} and  the self-consistent potential $\phi$ is determined by the Poisson equation
\begin{align}\label{vp-2}
-\Delta \phi (t, x)= &\int_{\mathbb{R}^3}F(t, x, v)\mathrm{d}v- \rho_0.
\end{align}
Here the background density $\rho_0$ is a constant.
Without loss of generality, by some rescaling, we assume that $\rho_0=\int_{\mathbb{R}^3} \mu(v) \mathrm{d}v$.

We consider the system \eqref{vp-1}-\eqref{vp-2} in a bounded domain $\Omega \subset \mathbb{R}^3$ with boundary $\partial \Omega$. We supply the Poisson equation \eqref{vp-2}
with  the zero Neumann boundary condition:
\begin{align}\label{vp-3}
\frac{\partial \phi}{\partial n}=  \, 0 \,\,\,\,\mathrm{on}\quad \partial\Omega
\end{align}
and the equation \eqref{vp-1} with  the so-called in-flow boundary condition:
\begin{align}\label{vp-4}
F(t, x, v)= G(t, x, v) \,\, \mathrm{for}\, \, (x, v)\in \gamma_-,
\end{align}
where $G$ is a given function, the incoming set is defined as
\begin{align*}
\gamma_-:= \{(x, v)\in \partial \Omega \times \mathbb{R}^3: n(x)\cdot v <0\},
\end{align*}
and $n(x)$ is the outward unit normal vector at a boundary point $x$.
We also define $\gamma_+$ and $\gamma_0$ as:
\begin{align*}
\mathrm{outgoing}\,\,\gamma_+:=& \{(x, v)\in \partial \Omega \times \mathbb{R}^3: n(x)\cdot v >0\},\\
\mathrm{grazing}\,\,\gamma_0:=& \{(x, v)\in \partial \Omega \times \mathbb{R}^3: n(x)\cdot v =0\}.
\end{align*}

We make a perturbation around $\mu$, i.e. $F(t, x, v)= \mu + \sqrt{\mu} f(t, x, v)$, to the problem \eqref{vp-1}-\eqref{vp-4}.
Then the  corresponding problem reads
\begin{align}
&\partial_t f+ v\cdot \nabla_x f -\nabla_x \phi_f \cdot \nabla_v f + \frac{v}{2}\cdot \nabla_x \phi_f f + Lf= \Gamma (f, f)- v\cdot \nabla_x \phi_f \sqrt{\mu},
 \label{perturb eqn}\\
&-\Delta \phi_f (t, x)= \int_{\mathbb{R}^3} f(t,x,v)\sqrt{\mu(v)}\mathrm{d}v\,\,\,\,\mathrm{in}\,\,\Omega, \,\,\,\frac{\partial \phi_f}{\partial n}=0\,\,\,\,\mathrm{on}\,\,\partial \Omega,\label{poisson}\\
& f(t,x,v)=g(t,x,v)\,\,\mathrm{for}\,\,(x,v)\in\gamma_-,\label{incoming}
\end{align}
where $g$ satisfies $G = \mu + \sqrt{\mu} g$,
and  the initial datum $f|_{t=0}=f(0, x, v):= f_0(x, v)=(F(0,x,v)-\mu)/\sqrt{\mu}$. Here we use the notation $\phi_f$  to emphasize the correspondence between $\phi$ and $f$.

The nonlinear operator $\Gamma(\cdot, \cdot)$ and linear operator $L$ in \eqref{perturb eqn}  are defined as
\begin{align*}
\Gamma(f_1, f_2)=\frac{1}{\sqrt{\mu}}Q (\sqrt{\mu}f_1, \sqrt{\mu}f_2),
\end{align*}
and
\begin{align*}
Lf=-\frac{1}{\sqrt{\mu}}(Q(\mu, \sqrt{\mu}f)+ Q(\sqrt{\mu}f, \mu)),
\end{align*}
respectively.
And
$L$ can be further split into $L= \nu-K$. The collision frequency $\nu(v)\equiv \int_{\mathbb{R}^3 \times \mathbb{S}^2}|u-v|^\gamma b_0(\theta)\mu(u)\mathrm{d}u \mathrm{d}\omega$\, for $-3 < \gamma< 0$. Moreover, there exists a constant $C_\gamma >0$ such that
\begin{align*}
\frac{1}{C_\gamma} (1+|v|^2)^{\gamma/2}\leq \nu(v)\leq C_\gamma (1+|v|^2)^{\gamma/2}.
\end{align*}
Equivalently, it holds that $\nu(v)\sim (1+|v|^2)^{\gamma/2}\sim \langle v\rangle^\gamma$.
$K$ is an integral kernel defined as
\begin{align}\label{K}
Kf=&\int_{\mathbb{R}^3 \times \mathbb{S}^2}|u-v|^\gamma b_0(\theta)\sqrt{\mu(u)}\{f(u')\sqrt{\mu(v')}+f(v')\sqrt{\mu(u')}\}\mathrm{d}u \mathrm{d}\omega \nonumber\\
&-\sqrt{\mu(v)}\int_{\mathbb{R}^3 \times \mathbb{S}^2}|u-v|^\gamma b_0(\theta)\sqrt{\mu(u)}f(u)\mathrm{d}u \mathrm{d}\omega:=K_2 f-K_1 f.
\end{align}
The nonlinear term $\Gamma(f_1,f_2)$ is equal to
\begin{align}\label{Gamma}
&\int_{\mathbb{R}^3 \times \mathbb{S}^2}|u-v|^\gamma b_0(\theta)\sqrt{\mu(u)}\{f_1(u')f_2(v')-f_1(u)f_2(v)\}\mathrm{d}u \mathrm{d}\omega := \Gamma_{\mathrm{gain}}(f_1,f_2)-\Gamma_{\mathrm{loss}}(f_1,f_2).
\end{align}

Due to the physical importance of VPB system, it has been extensively studied.
In the whole space $\mathbb{R}^3$ or the period box $\mathbb{T}^3$, there are several results for VPB system.
Guo \cite{Guo2001} constructed the global classical solutions in $\mathbb{R}^3$ to the VPB system near vacuum.
For hard-sphere scattering kernel, global solutions near the global Maxwellian $\mu$  were constructed by Guo \cite{Guo2002} in a periodic box $\mathbb{T}^3$.
For the whole space $\mathbb{R}^3$ case, global solutions near a Maxwellian were constructed in \cite{zhao2006, Hongjun}.
In \cite{SrainVMB}, a self-consistent magnetic effect was also included.
Through a series of works \cite{huijiang2013, huijiang2014, huijiang2017, zhao2012, zhao2013} by Zhao and his collaborators, they finally provided a satisfactory global well-posedness theory to the one-species VPB system near Maxwellians for the whole range ($-3<\gamma \leq 1$) of cut-off intermolecular interactions in the perturbative framework.

Meanwhile, there are some results on the  VPB system in bounded domains.
Mischler \cite{Mischler} proved global existence of renormalized solutions (introduced in  \cite{Lions} for the Boltzmann equation) to the VPB system in a smooth bounded domain of $\mathbb{R}^3$ with general boundary conditions: in-flow, bounce-back, specular and diffusion boundary conditions.
Cao, Kim and Lee \cite{Kim} obtained the global strong solutions to hard-sphere VPB system in convex domains for diffuse boundary condition, by using an $L^2$--$L^\infty$ argument developed by Guo \cite{Guo2010} for constructing classical solutions to Boltzmann equation.
Recently, in a convex domain with the Cercignani-Lampis boundary condition, \cite{jsp} established the local well-posedness of VPB system.

The $L^2$--$L^\infty$ framework has been extensively used to deal with different kinetic equations in bounded domains, such as \cite{Lee} for Boltzmann equation in a general smooth ($C^3$) convex domain, \cite{shuangqian} for Boltzmann equation with soft potential ,   { \cite{Guo2020} for Landau equation,}  and \cite{Ouyang} for the Vlasov-Poisson-Landau system.
For more information about $L^2$--$L^\infty$ method, one can refer to \cite{Hwang}, and see also  \cite{Juhi} on the applications
of the $L^2$--$L^\infty$ framework on the global Hilbert expansion for VPB system in the torus $\mathbb{T}^3$.

For the Boltzmann collision operator $Q(\cdot, \cdot)$, due to the singularities near the zero relative velocity and the lose of positive lower bound of collision frequency, the soft potential case usually behaves worse than the hard potential case. Strain and Guo \cite{Strain} provided some basic tools to deal with soft potentials in $\mathbb{T}^3$. Xiao, et al. \cite{huijiang2017} studied  the VPB system in whole space $\mathbb{R}^3$ with  soft potential for $-3<\gamma <0$.
 Recently,  Liu and  Yang  \cite{shuangqian} considered the   Boltzmann equation with soft potentials in a bounded   domain  with diffusive reflection and specular reflection boundary conditions and obtained  the global-in-time existence and time decay   of positive solutions near the equilibrium $\mu$ in the  $L^2$--$L^\infty$ framework.
  {We also mention that Duan, et al. \cite{duan2017} obtained the global existence and uniqueness of mild solutions to the Boltzmann equation with the cut-off hard and soft potentials  in the whole space or torus when the  initial datum  allows large amplitude oscillations in some sense.}

The purpose of this paper is to investigate the  VPB system in bounded domains for soft potentials, i.e. the problem \eqref{perturb eqn}-\eqref{incoming} under the assumption \eqref{soft}.  To state our result, we need some preliminaries.

First, we give some explanations on the bounded domain $\Omega$. As \cite{Kim} said, a $C^3$ domain means that for any $p \in \partial \Omega$, there exist sufficiently small $\delta_1, \delta_2 >0$, and an one-to-one and onto $C^3-$map
\begin{align*}
\eta_p:\{x_\parallel \in \mathbb{R}^2: |x_\parallel|< \delta_1\}\rightarrow & \,\partial \Omega \cap B(p, \delta_2),\\
x_\parallel = (x_{\parallel, 1}, x_{\parallel, 2})\mapsto & \,\eta_p(x_{\parallel, 1}, x_{\parallel, 2}).
\end{align*}
And a convex domain means that there exists a $C_\Omega >0$ such that for all $p\in \partial\Omega$ and $\eta_p$, and for all $x_\parallel$,
\begin{align}\label{convex}
\sum^2_{i,j=1}\zeta_i \zeta_j \partial_i \partial_j \eta_p (x_\parallel)\cdot n(x_\parallel)\leq -C_\Omega |\zeta|^2 \,\,\,\mathrm{for}\,\,\,\mathrm{all}\,\,\,\zeta \in \mathbb{R}^2.
\end{align}

Next, for given $(t, x, v)$, let $[x(s), v(s)]= [x(s; t,x,v), v(s;t, x, v)]$ be the characteristics for the Vlasov-Boltzmann equation (\ref{Vlasov-Boltzmann}):
\begin{align*}
\frac{\mathrm{d}x(s)}{\mathrm{d}s}=v(s),\,\,\,\frac{\mathrm{d}v(s)}{\mathrm{d}s}=E(s, x(s))
\end{align*}
with the initial condition: $[x(t;t,x,v), v(t;t,x,v)]=[x, v]$.

For $(t,x,v)$, we define the \emph{backward exit time} as
\begin{align*}
t_b(t, x, v):=\sup\{s \geq 0: x(\tau)\in \Omega \,\,\mathrm{for}\,\,\mathrm{all}\, \tau \in (t-s, t)\}.
\end{align*}
We also define $x_b(t,x,v):=x(t-t_b)$ and $v_b(t,x,v):= v(t-t_b)$.
Clearly, for any $x \in \Omega$, $t_b(t, x, v)$ is well-defined for all $v \in \mathbb{R}^3$. If $x \in \partial\Omega$, $t_b$ is well-defined for all $v \cdot n(x)>0$. Thanks to Lemma 2 in \cite{Kim}, if $\Omega$ is convex in the sense of \eqref{convex}, and $E \in C_x^1$ satisfies $n(x)\cdot E(t, x)=0$ for $x \in \partial\Omega$, and $t+ 1\geq t_b$, then it holds $n(x_b) \cdot v_b <0$.

As in \cite{Kim}, for a given function $f(t, x, v)$ satisfies \eqref{perturb eqn}, we define a kinetic weight as
\begin{align}\label{kinetic weight}
\alpha_{f, \varepsilon}(t,x,v):= \widetilde{\chi}\left(\frac{t-t_b+\varepsilon}{\varepsilon}\right)|n(x_b)\cdot v_b|+\left[1-\widetilde{\chi}\left(\frac{t-t_b+\varepsilon}{\varepsilon}\right)\right].
\end{align}
Here we have used a smooth function $\widetilde{\chi}: \mathbb{R}\rightarrow [0, 1]$ satisfying
\begin{align*}
&\widetilde{\chi}(\tau)=0, \, \tau \leq 0; \,\, \mathrm{and} \,\, \widetilde{\chi}(\tau)=1,\,\tau\geq 1;\\
& \frac{\mathrm{d}}{\mathrm{d}\tau}\widetilde{\chi}(\tau)\in [0,4] \,\,\mathrm{for}\,\,\mathrm{all}\,\tau \in \mathbb{R}.
\end{align*}
It is important to notice that
\begin{align*}
\alpha_{f, \varepsilon}(t,x,v)=|n(x)\cdot v|\,\,\mathrm{on}\,\gamma_-,
\end{align*}
which is the main motivation to bring in this definition.
Another crucial property of this kinetic weight is that, along the trajectory, we have
\begin{align*}
[\partial_t + v\cdot \nabla_x - \nabla_x \phi \cdot \nabla_v]\alpha_{f, \varepsilon}(t,x,v)\equiv 0.
\end{align*}
This invariance is due to the fact that $\frac{\mathrm{d}(t-t_b)}{\mathrm{d}t}\equiv 0$, $\frac{\mathrm{d}x_b}{\mathrm{d}t}\equiv 0$ and $\frac{\mathrm{d}v_b}{\mathrm{d}t}\equiv 0$. Hence, $\frac{\mathrm{d}\alpha_{f, \varepsilon}(t, x, v)}{\mathrm{d}t}\equiv 0$.

We denote for the constants $\vartheta>0$ and $\theta>0$ that
\begin{align*}
\widetilde{\vartheta}(t):= \vartheta \left(1+ \frac{1}{(1+t)^\theta}\right),
\end{align*}
and we introduce a new time-velocity weight for the soft potential, namely,
\begin{align*}
w_{\widetilde{\vartheta}}(v)= e^{\widetilde{\vartheta}|v|^2}.
\end{align*}

We need to define the boundary integration for $g(x,v)$, $x \in \partial \Omega$, that
\begin{align*}
\int_{\gamma_\pm}g \mathrm{d}\gamma=\int_{\gamma_\pm}g(x,v)|n(x)\cdot v|\mathrm{d}S_x\mathrm{d}v,
\end{align*}
where $\mathrm{d}S_x$ is the standard surface measure on $\partial \Omega$. We also denote for $h \in L^p(\gamma _\pm)$, $1\leq p < \infty$, that
\begin{align*}
\int_{\gamma_\pm}|h|^p \mathrm{d}\gamma :=|h|^p_{p, \pm}.
\end{align*}
If $h \in L^p(\gamma _\pm)$, we say $h \in L^p(\gamma)$, $1\leq p <\infty$. For $p=\infty$, we regard $L^\infty(\gamma)$ to be equal to $L^\infty (\partial\Omega \times \mathbb{R}^3)$.

Throughout this paper, $A\lesssim B$ means that there is a constant $C>0$  such that $A\leq C B$, while the sign $\lesssim_t$ corresponds to a $t-$dependent constant $C(t)>0$. In addition, we denote $E\sim F$ if it holds that $E\lesssim F \lesssim E$.

Now we are in a position to state our main result.
\begin{theorem}\label{Thm1}
Assume that a bounded, open $C^3$ domain $\Omega \subset \mathbb{R}^3$ is convex as defined in \eqref{convex}. Let $0 < \vartheta \ll 1$, $\theta \gamma +2 >0$, $\rho\equiv \frac{\theta \gamma+2}{2-\gamma}\in (0, 1)$ and $\widetilde{\vartheta}= \vartheta \left(1+ \frac{1}{(1+t)^\theta}\right)$. Assume the following compatibility conditions hold:
\begin{align*}
f_0(x, v)=g(0, x, v) \, \,\,\mathrm{on}\,\, \gamma_-,
\end{align*}
and
\begin{align}
\int_{n(x)\cdot v>0}\sqrt{\mu}f |n(x)\cdot v|\mathrm{d}v=\int_{n(x)\cdot v<0}\sqrt{\mu}g |n(x)\cdot v|\mathrm{d}v \,\,\,\mathrm{on}\,\, \partial\Omega.
\end{align}

Assume further that there exists a small constant $0 < \varepsilon_0 \ll 1$ such that for all $0 < \varepsilon \leq \varepsilon_0$, if an initial datum $F_0 = \mu + \sqrt{\mu}f_0 \geq 0$ satisfies
\begin{align*}
\|w_{2\vartheta}f_0\|_{L^\infty(\Omega \times \mathbb{R}^3)} +\|w_{\vartheta}\alpha^\beta_{f_0, \varepsilon}\nabla_{x, v}f_0\|_{L^p(\Omega \times \mathbb{R}^3)} < \varepsilon
\end{align*}
for $3<p<6-2\varpi$, $\frac{p-2}{p}<\beta< \frac{2-\varpi}{3-\varpi}$, $0 < \varpi \ll 1$, and
\begin{align*}
\|w_\vartheta \nabla_v f_0\|_{L^3(\Omega \times \mathbb{R}^3)}<\infty,
\end{align*}
and there exists a constant $\lambda_0>0$, such that the boundary datum $g$ satisfies
\begin{align*}
\sup_{0 \leq s\leq \infty} \|e^{\lambda_0 s^\rho}w_{\widetilde{\vartheta}}g(s)\|_{L^\infty(\partial\Omega \times \mathbb{R}^3)}+ \sup_{0 \leq s \leq \infty} \|w_\vartheta \nabla_{x,v}g(s)\|_{L^\infty(\partial\Omega \times \mathbb{R}^3)}<\varepsilon,
\end{align*}
then,  for each given finite time $T>0$, the problem \eqref{perturb eqn}--\eqref{incoming} has a unique solution $(f, \phi_f)$ in $[0, T]$, such that $F(t)= \mu + \sqrt{\mu}f(t)\geq 0$. Moreover, there exists $0<\lambda_m < \lambda_0$ such that
\begin{align*}
\sup_{0 \leq t \leq T}\left\{e^{\lambda_m t^\rho}\|w_{\widetilde{\vartheta}}f(t)\|_{L^\infty(\Omega \times \mathbb{R}^3)} \right\} \lesssim \|w_{\widetilde{\vartheta}}f(0)\|_{L^\infty(\Omega \times \mathbb{R}^3)} + \sup_{s\geq 0}\left\{e^{\lambda_0 s^\rho}\|w_{\widetilde{\vartheta}}g(s)\|_{L^\infty(\partial\Omega \times \mathbb{R}^3)} \right\},
\end{align*}
and for some $C>0$, $0<\vartheta_1<\vartheta$,
\begin{align*}
\|w_{\widetilde{\vartheta_1}}\alpha^\beta_{f, \varepsilon}\nabla_{x, v}f(t)\|_{L^p(\Omega \times \mathbb{R}^3)}\lesssim e^{Ct^\rho},\,\,\mathrm{for}\,\,t\in[0, T],
\end{align*}
and for $0<\delta = \delta(p, \beta)$, $0< \vartheta_2 < \vartheta_1$,
\begin{align*}
\|w_{\widetilde{\vartheta_2}}\nabla_v f(t)\|_{L^3(\Omega)L^{1+\delta}(\mathbb{R}^3)}\lesssim_t 1,\,\,\mathrm{for}\,\,t\in[0, T].
\end{align*}
Furthermore, if $(f_1, \phi_{f_1})$ and $(f_2, \phi_{f_2})$ are both solutions to the system \eqref{perturb eqn}--\eqref{incoming} with the same initial and boundary conditions, then
\begin{align*}
\|w_{\widetilde{\vartheta_2}}[f_1(t)-f_2(t)]\|_{L^{1+\delta}(\Omega \times \mathbb{R}^3)}\equiv 0, \,\,\mathrm{for}\,\,t\in [0, T].
\end{align*}

\end{theorem}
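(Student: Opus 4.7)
The plan is to construct the solution by an iteration scheme and propagate three nested a priori estimates — an $L^\infty$ bound with sub-exponential decay in time, a weighted $W^{1,p}$ bound on $\nabla_{x,v}f$ tamed by the kinetic weight $\alpha_{f,\varepsilon}^\beta$, and an $L^3_xL^{1+\delta}_v$ bound on $\nabla_v f$ — each feeding into the next. The iterates $(f^n,\phi^n)$ are built by freezing the field, the self-consistent drift and $\Gamma(f,f)$ at $f^{n-1}$, so that $f^n$ solves a linear Vlasov transport equation with prescribed data; solvability of each linear step is by the method of characteristics along the flow of $-\nabla_x\phi^{n-1}$, and the content of the argument is to obtain uniform-in-$n$ estimates in the norms of the theorem that the iteration preserves.

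For the $L^\infty$ bound I would follow the $L^2$--$L^\infty$ framework, adapted to soft potentials. Setting $h=w_{\widetilde\vartheta}f$, the equation picks up an extra dissipation coming from $\partial_t\widetilde\vartheta(t)\,|v|^2$, which compensates the degeneracy $\nu(v)\sim\langle v\rangle^\gamma\to 0$ at large $|v|$. The Duhamel representation along the Vlasov characteristics, iterated twice as in Guo's original trick, reduces the contribution of the integral operator $K$ to a small multiple of $\|h\|_\infty$ plus a weighted $L^2$ norm, while the elementary properties of $t_b$ and the hypothesis $n(x_b)\cdot v_b<0$ rule out trajectory re-entries. The $L^2$ estimate is then closed by the macroscopic energy method, using the coercivity of $L$, the smallness of $\phi_f$ from elliptic regularity for \eqref{poisson}, and a sub-exponential time weight $e^{\lambda t^\rho}$ whose exponent $\rho=(\theta\gamma+2)/(2-\gamma)$ is exactly what balances $\int_0^t\langle v\rangle^\gamma\,\mathrm{d}s$ against the Maxwellian margin gained from $w_{\widetilde\vartheta}$.

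For the weighted $W^{1,p}$ estimate I would differentiate \eqref{perturb eqn} in $x$ and $v$ and test against $|\nabla_{x,v}f|^{p-2}\nabla_{x,v}f\,w_{\widetilde{\vartheta_1}}^{\,p}\alpha_{f,\varepsilon}^{\beta p}$. The crucial invariance
\begin{align*}
[\partial_t+v\cdot\nabla_x-\nabla_x\phi\cdot\nabla_v]\alpha_{f,\varepsilon}\equiv 0
\end{align*}
recalled in the excerpt lets $\alpha^{\beta p}$ commute with the transport operator, so no uncontrolled contribution appears on $\gamma_+$; meanwhile $\alpha=|n(x)\cdot v|$ on $\gamma_-$ converts the boundary integral into the given norm of the data $g$. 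The range $\tfrac{p-2}{p}<\beta<\tfrac{2-\varpi}{3-\varpi}$ is precisely what is needed so that the singular factor $|n(x_b)\cdot v_b|^{-(p-2)/p}$ produced by $\partial_v t_b$ is integrable while $\Gamma_{\mathrm{loss}}$ can still be absorbed into the dissipation via the $L^\infty$ bound already secured. A Gronwall argument then yields the claimed $e^{Ct^\rho}$ growth, and the $L^3_xL^{1+\delta}_v$ control of $\nabla_v f$ follows analogously but with a mixed-norm test function and Sobolev embedding applied to $\nabla_x\phi_f$.

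The main obstacle is that for soft potential $\nu$ has no positive lower bound, so the hard-potential exponential decay and the standard spectral-gap closure both break down; worse, $\nabla_x\phi_f$ introduces a field-induced loss of derivative that interacts badly with the kinetic weight near $\gamma_0$. I would overcome both difficulties simultaneously through the time-decaying weight $\widetilde\vartheta(t)=\vartheta(1+(1+t)^{-\theta})$, whose derivative supplies an $|v|^2$-dissipation that beats the degeneracy of $\nu$ at the price of only a sub-exponential factor $e^{\lambda t^\rho}$; the constraints on $\vartheta,\theta,\rho,p,\beta,\varpi$ in the hypotheses are precisely those which make this trade positive while keeping the $\alpha$-singularity integrable and the Poisson coupling energy-compatible. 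Once these uniform bounds are available, the iterates contract in $L^{1+\delta}$ on a short interval, and since no estimate degenerates before time $T$ a bootstrap extends the solution over the whole of $[0,T]$. Non-negativity $F\geq 0$ is propagated at the level of the iterates from the exponential-form Duhamel representation of $F^n$ with nonnegative source, and the $L^{1+\delta}$ uniqueness statement follows from the same weighted energy identity applied to the difference of two solutions coupled through \eqref{poisson}.
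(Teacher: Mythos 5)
Your overall architecture --- an iteration scheme, the $L^2$--$L^\infty$ framework with the time-decaying weight $w_{\widetilde\vartheta}$ supplying the missing dissipation $\partial_t\widetilde\vartheta\,|v|^2$, weighted $W^{1,p}$ and $L^3_xL^{1+\delta}_v$ derivative estimates with the kinetic weight $\alpha^\beta_{f,\varepsilon}$, an $L^{1+\delta}$ contraction, and a $T$-dependent bootstrap --- is the same as the paper's. Two steps, however, would fail as written. First, you freeze \emph{all} of $\Gamma(f,f)$ at the previous iterate and then claim positivity of $F^{n}$ from an ``exponential-form Duhamel representation with nonnegative source.'' With a fully explicit scheme the source $Q(F^{n-1},F^{n-1})=Q_{\mathrm{gain}}-Q_{\mathrm{loss}}$ has no sign and there is no absorption coefficient to exponentiate. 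The paper uses the semi-implicit splitting $Q_{\mathrm{gain}}(F^{\ell},F^{\ell})-Q_{\mathrm{loss}}(F^{\ell},F^{\ell+1})$, so that $Q_{\mathrm{loss}}(F^{\ell},F^{\ell+1})=F^{\ell+1}\nu(F^{\ell})$ moves into the exponent and the remaining source $Q_{\mathrm{gain}}(F^{\ell},F^{\ell})\geq 0$; your positivity claim needs exactly this structure.

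Second, the soft-potential singularity in the derivative estimate is not addressed, and the absorption you describe does not work there. The dangerous terms are $w_{\widetilde\vartheta}\alpha^\beta_{f,\varepsilon}\bigl(\Gamma^{1-\chi}_{\mathrm{gain}}(\partial f,f)+\Gamma^{1-\chi}_{\mathrm{gain}}(f,\partial f)+\Gamma^{1-\chi}_{\mathrm{loss}}(\partial f,f)\bigr)$, supported on $|u-v|\leq 2\varepsilon$: if one tries to trade the singularity $|u-v|^\gamma$ against that of $\alpha^{-\beta q}_{f,\varepsilon}$ by H\"older, the paper shows the resulting exponent $\gamma-\tfrac{5p}{q}<-1$ makes the $u$-integral diverge for every admissible $3<p<6-2\varpi$. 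The way out is the comparison of kinetic weights $\alpha_{f,\varepsilon}(v)\lesssim\alpha_{f,\varepsilon}(w)+O(\varepsilon)$ for $w=u,u',v'$ (outside a set of measure $O(\varepsilon)$), which produces a remainder $O(\varepsilon)\|\nabla_{x,v}f\|_\infty$ that must be absorbed using the a priori finiteness of $\|\nabla_{x,v}f\|_\infty$, itself secured along the iteration by weak-$*$ lower semicontinuity. Without this step the $W^{1,p}$ estimate --- and hence the $L^3_xL^{1+\delta}_v$ bound, the $L^{1+\delta}$ stability, and the bootstrap built on them --- does not close. The rest of your sketch (double Duhamel iteration, macroscopic $L^2$ coercivity, interpolation for $\nabla\phi_f$, smallness $M_T$ depending on $T$) matches the paper.
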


Let us sketch our ideas and novelties in the proof of Theorem \ref{Thm1}.

Contrasted with the work \cite{Kim}, where the hard sphere potential $(\gamma=1)$ for Boltzmann collision operator $Q(F,F)$ was considered, we consider the soft potential $(-3<\gamma <0)$ case, which causes the singularity as relative velocity tends to zero.

To treat the singularity of $K=K_2-K_1$ [see \eqref{K}], we introduce a smooth cut-off function $0 \leq \chi \leq 1$ such that $\chi(s)= 1$ if $s \geq 2\varepsilon$; $\chi(s)=0$ if $s\leq \varepsilon$. Then we have
\begin{align}\label{K chi}
K_1= K_1^\chi+K^{1-\chi}_1,\,\,\,K_2= K_2^\chi+K^{1-\chi}_2,
\end{align}
where
\begin{align}\label{k2}
K_1^\chi f=& \int_{\mathbb{R}^3 \times \mathbb{S}^2}|u-v|^\gamma \chi(|u-v|) b_0(\theta)\sqrt{\mu(u)}\sqrt{\mu(v)}f(u)\mathrm{d}u \mathrm{d}\omega,\nonumber\\
K_2^\chi f=& \int_{\mathbb{R}^3 \times \mathbb{S}^2}|u-v|^\gamma \chi(|u-v|) b_0(\theta)\sqrt{\mu(u)}\{f(u')\sqrt{\mu(v')}+f(v')\sqrt{\mu(u')}\}\mathrm{d}u \mathrm{d}\omega \nonumber\\
:= &\int_{\mathbb{R}^3}\mathbf{k}_2^\chi(v, u)f(u)\mathrm{d}u.
\end{align}
In this paper, we regard the kernel $\mathbf{k}_2^\chi(v, u)$ corresponding to $\sqrt{\mu(\cdot)}=e^{-\frac{1}{4}|\cdot|^2}$ as ``standard''.

$\bullet$ We emphasize that the structures of the time-velocity weight will be more subtle in the soft potential case than in the hard sphere case while proving a priori estimates and global existence of solutions to the system \eqref{perturb eqn}--\eqref{incoming}.

There are two main reasons to introduce the velocity-time weight function $w_{\widetilde{\vartheta}}(v)=e^{\vartheta(1+1/(1+t)^\theta)|v|^2}$. We point out that the following heuristic calculations are crucial for Sections 3 and 5.

(1). It is clear that $w_{\vartheta}\leq w_{\widetilde{\vartheta}}\leq w_{2\vartheta}$. When $0< \vartheta \ll 1$, it holds $w_{\vartheta}(v)\leq e^{\frac{1}{4}|v|^2}$, i.e. $\sqrt{\mu(v)}\leq w_{\vartheta}^{-1}(v)$.
We have
\begin{align*}
&\Gamma_{\mathrm{gain}}^\chi (f, g)+ \Gamma_{\mathrm{gain}}^\chi (g, f)\\
=& \int_{\mathbb{R}^3 \times \mathbb{S}^2} |u-v|^\gamma \chi(|u-v|)b_0(\theta)\sqrt{\mu(u)}(f(u')g(v')+ f(v')g(u'))\mathrm{d}u \mathrm{d}\omega\\
\lesssim & \|w_{\widetilde{\vartheta}}f\|_\infty \int_{\mathbb{R}^3 \times \mathbb{S}^2} |u-v|^\gamma \chi(|u-v|)b_0(\theta)w_{\vartheta}^{-1}(u)( w_{\vartheta}^{-1}(u')g(v')+ w_{\vartheta}^{-1}(v')g(u'))\mathrm{d}u \mathrm{d}\omega\\
:= & \|w_{\widetilde{\vartheta}}f\|_\infty \int_{\mathbb{R}^3} \mathbf{k}_{2, \vartheta}^\chi (v, u)g(u)\mathrm{d}u.
\end{align*}
For the standard $\mathbf{k}_2^\chi(v,u)$, it is clear that
\begin{align*}
\int_{\mathbb{R}^3}\mathbf{k}_2^\chi(v,u)g(u)\mathrm{d}u=& \int_{\mathbb{R}^3 \times \mathbb{S}^2} |u-v|^\gamma \chi(|u-v|)b_0(\theta)w_{1/4}^{-1}(u)( w_{1/4}^{-1}(u')g(v')+ w_{1/4}^{-1}(v')g(u'))\mathrm{d}u \mathrm{d}\omega\\
:= & \int_{\mathbb{R}^3}\mathbf{k}_{2, 1/4}^\chi(v,u)g(u)\mathrm{d}u.
\end{align*}
Thus $\mathbf{k}_{2, 1/4}^\chi(v,u)=\mathbf{k}_{2}^\chi(v,u)\lesssim \frac{\exp\left( -\frac{s_0}{8}|u-v|^2- \frac{s_0}{8}\frac{(|v|^2-|u|^2)^2}{|v-u|^2}\right)}{|v-u|(1+|v|+|u|)^{1-\gamma}}$ by Lemma \ref{k2x}. Through the same procedure to estimate $\mathbf{k}_{2, \vartheta}^\chi (v, u)$, we have $\mathbf{k}_{2, \vartheta}^\chi (v, u)\lesssim \frac{\exp\left( -\rho|u-v|^2- \rho\frac{(|v|^2-|u|^2)^2}{|v-u|^2}\right)}{|v-u|(1+|v|+|u|)^{1-\gamma}}$, where $\rho:\vartheta=\frac{s_0}{8}:\frac{1}{4}$. Hence
\begin{align*}
\mathbf{k}_{2, \frac{4\vartheta}{s_0}}^\chi (v, u)\lesssim \frac{\exp\left( -2\vartheta|u-v|^2- 2\vartheta\frac{(|v|^2-|u|^2)^2}{|v-u|^2}\right)}{|v-u|(1+|v|+|u|)^{1-\gamma}}
\end{align*}
by $2\vartheta : \frac{4 \vartheta}{s_0}=\frac{s_0}{8}:\frac{1}{4}$. And so as $\widetilde{\vartheta}$ replaces $\vartheta$ in the previous inequality.

(2). Multiplying $w_{\widetilde{\vartheta}}(v)$ to both sides of (\ref{perturb eqn}), we have the weighted equation:
\begin{align*}
\{\partial_t + v \cdot \nabla_x - \nabla_x \phi \cdot \nabla_v + \widetilde{\nu}\}(w_{\widetilde{\vartheta}}f)= w_{\widetilde{\vartheta}}(Kf+ \Gamma(f, f)- v \cdot \nabla_x \phi \sqrt{\mu}).
\end{align*}
Here $\widetilde{\nu}=\nu+\frac{v}{2}\cdot \nabla \phi+ \frac{\nabla \phi \cdot \nabla_v w_{\widetilde{\vartheta}}}{w_{\widetilde{\vartheta}}}-\frac{\partial_t w_{\widetilde{\vartheta}}}{w_{\widetilde{\vartheta}}}$.
By using Young's inequality, we have
\begin{align*}
\widetilde{\nu}\gtrsim &\langle v\rangle^\gamma - \left(\frac{1}{2}+4\vartheta\right)\langle v\rangle|\nabla \phi|+ \vartheta \theta \langle v\rangle^2 \frac{1}{(1+t)^{\theta+1}}\\
\gtrsim & \langle v\rangle^\gamma (1-(1+t)^{(1+\theta)(1-\gamma)}|\nabla \phi|)+(\vartheta\theta-|\nabla \phi|)\langle v\rangle^2 \frac{1}{(1+t)^{\theta+1}}.
\end{align*}
Note that $(1+t)^{(1+\theta)(1-\gamma)}|\nabla \phi|\lesssim e^{\Lambda_1 t^\rho}\|\nabla \phi(t)\|_\infty\leq \delta_1 \ll 1$. If $\vartheta\theta > \delta_1$, then we have
\begin{align*}
\widetilde{\nu}\gtrsim \frac{1}{2}\langle v\rangle^\gamma + C \langle v\rangle^2 \frac{1}{(1+t)^{\theta+1}}\gtrsim \nu >0,
\end{align*}
where $C>0$ is a small constant. The positivity of $\widetilde{\nu}$ is crucial for Propositions \ref{prop 3}, \ref{prop 4} and \ref{prop 5}, which guarantees
\begin{align*}
\int_0^t \int_{\Omega \times \mathbb{R}^3}\widetilde{\nu}|w_{\widetilde{\vartheta}}f|^p \mathrm{d}x\mathrm{d}v\mathrm{d}s=\int_0^t \left\|\widetilde{\nu}^{\frac{1}{p}}w_{\widetilde{\vartheta}}f \right\|_p^p \geq 0,
\end{align*}
and
\begin{align*}
e^{\int_t^s \widetilde{\nu}(\tau)\mathrm{d}\tau}\leq & 1, \,\,s\leq t,\\
0 \leq \int_0^t e^{\frac{1}{2}\int_t^s \widetilde{\nu}(\tau)\mathrm{d}\tau}\widetilde{\nu}(s, x(s), v(s))\mathrm{d}s = & 2 \int_0^t \mathrm{d}\left(e^{\frac{1}{2}\int_t^s \widetilde{\nu}(\tau)\mathrm{d}\tau}\right)\leq 2.
\end{align*}
On the other hand, if we use $w_{\vartheta}$ instead of $w_{\widetilde{\vartheta}}$, the corresponding $\widetilde{\nu}= \nu + (1/2+ 2\vartheta)v \cdot \nabla \phi$ does not have a lower bound as $|v|$ is large. so we cannot obtain these propositions without $w_{\widetilde{\vartheta}}$.

Moreover, by using Young's inequality again, as $\theta\gamma+2>0$, we have
\begin{align*}
\widetilde{\nu}\gtrsim \frac{1}{2}\langle v\rangle^\gamma + C \langle v\rangle^2 \frac{1}{(1+t)^{\theta+1}}\gtrsim (1+t)^{\frac{(\theta+1)\gamma}{2-\gamma}}\equiv (1+t)^{\rho-1}, \,\, \rho= \frac{\theta \gamma +2}{2-\gamma}\in (0,1).
\end{align*}
Thus
\begin{align*}
e^{\int_t^s \widetilde{\nu}(\tau)\mathrm{d}\tau}\leq e^{\frac{C}{\rho}\int_t^s \mathrm{d}(1+\tau)^\rho}:=e^{\lambda (1+s)^\rho-\lambda (1+t)^\rho}\sim e^{\lambda s^\rho-\lambda t^\rho}.
\end{align*}
It overcomes the difficulty that the collision frequency $\nu$ does not have positive lower bound and provides the sub-exponential decay of the solution.

$\bullet$ The singularity appears in $(1-\chi(|u-v|))|u-v|^\gamma$, since $-3<\gamma<0$ in soft potential case. We shall deal with the tricky terms
\begin{align}\label{1-X}
w_{\widetilde{\vartheta}}\alpha^\beta_{f, \varepsilon}(\Gamma^{1-\chi}_{\mathrm{gain}}(\partial f, f)+ \Gamma^{1-\chi}_{\mathrm{gain}}(f, \partial f)+\Gamma^{1-\chi}_{\mathrm{loss}}(\partial f, f)).
\end{align}
Considering the definition of $\alpha^\beta_{f, \varepsilon}(t,x,v)$ [see \eqref{kinetic weight}], we point out that
$\alpha^\beta_{f, \varepsilon}(v)\lesssim \alpha^\beta_{f, \varepsilon}(w)+O(\varepsilon)$, for $w=u, u', v'$ as $\{t-t_b \geq 0\}\cup \{t-t_b \leq -\varepsilon\}$. While it also holds $\int_{x,v}\mathbf{1}_{\{-\varepsilon<t-t_b<0\}}\mathrm{d}x\mathrm{d}v=O(\varepsilon)$. For the previous both $O(\varepsilon)$ terms, with the condition $\|\nabla_{x,v}f\|_\infty < \infty$, we have the term $O(\varepsilon)\|\nabla_{x,v}f\|_\infty$ as a part of the bound of \eqref{1-X}. Taking $\varepsilon$ sufficiently small, then $O(\varepsilon)\|\nabla_{x,v}f\|_\infty$ is absorbed into the initial data. For details see the proofs of Proposition 3.1. In Section 4, we can verify the condition $\|\nabla_{x,v}f\|_\infty<\infty$ by the separability of $L^1$ and the weakly (star) lower semicontinuity of $L^\infty$ (Through the standard proofs of the latter, see for example \cite{Lieb}, the weak star convergence condition is enough).

However, if we want to obtain the bound of \eqref{1-X} with more direct ways, for example, H\"{o}lder's inequality, the singularity of soft potential behaves wild. For instance, to remove the singularity of $\alpha^{-\beta q}$, we have
\begin{align*}
&\int_{\mathbb{R}^3 \times \mathbb{S}^2}|u-v|^\gamma (1-\chi (|u-v|)) w_{\widetilde{\vartheta}}^{-1}(u')|w_{\widetilde{\vartheta}}\partial f(v')|\mathrm{d}u\mathrm{d}\omega\\
\lesssim &\left( \int_{\mathbb{R}^3 \times \mathbb{S}^2}|u-v|^{\gamma- \frac{3p}{q}} |(u-v)\cdot \omega|^{-\frac{2 p}{q}}(1-\chi (|u-v|))e^{Cp |u-v|^2}
w_{\widetilde{\vartheta}}^{-1}(u')|w_{\widetilde{\vartheta}}\alpha^\beta_{f, \varepsilon}\partial f(v')|^p\mathrm{d}u\mathrm{d}\omega \right)^{\frac{1}{p}}\\
& \times \left(\int_{\mathbb{R}^3 \times \mathbb{S}^2}|u-v|^{\gamma+3} (1-\chi)|(u-v)\cdot \omega|^2 \frac{e^{-Cq|v-u|^2}}{\alpha^{\beta q}_{f, \varepsilon}(v')}\mathrm{d}u\mathrm{d}\omega \right)^{\frac{1}{q}}
:= B \times A.
\end{align*}
Denote that $V\equiv u-v$, then $V_\parallel =[(u-v)\cdot \omega]\omega$, $|V_\parallel|=|(u-v)\cdot \omega|$, $v'=v+V_\parallel$ and  $\mathrm{d}v'=\mathrm{d}V_\parallel$. Thus we have
\begin{align*}
\mathrm{d}u\mathrm{d}\omega = \frac{2 \mathrm{d} V_{\parallel}\mathrm{d} V_{\perp}}{|V_\parallel|^2}= \frac{2 \mathrm{d} V_\perp}{|V_\parallel|^2}\mathrm{d} v'.
\end{align*}
Using Lemma \ref{alpha beta}, the second term $A^q$ equals to
\begin{align*}
\int |V|^{\gamma +3}\frac{e^{-Cq|v'-u'|^2}}{\alpha^{\beta q}_{f, \varepsilon}(v')} \mathrm{d}v'\int_{|V|\leq 2 \varepsilon}|V_\parallel|^2 \frac{2 \mathrm{d} V_\perp}{|V_\parallel|^2} \lesssim 1.
\end{align*}
For the first term $B$, it holds (see, for example \cite{Glassey}) that
\begin{align*}
(u-v)\cdot \omega = -(u'-v')\cdot \omega.
\end{align*}
Thus we have
\begin{align*}
\int_{\mathbb{R}^3}|B|^p \mathrm{d}v= & \int_{\mathbb{R}^3 \times \mathbb{R}^3\times \mathbb{S}^2}|u-v|^{\gamma- \frac{3p}{q}} |(u-v)\cdot \omega|^{-\frac{2 p}{q}}(1-\chi )e^{Cp |u-v|^2}
w_{\widetilde{\vartheta}}^{-1}(u')|w_{\widetilde{\vartheta}}\alpha^\beta_{f, \varepsilon}\partial f(v')|^p\mathrm{d}u \mathrm{d} v\mathrm{d}\omega \\
\lesssim & \int_{\mathbb{R}^3 \times \mathbb{R}^3\times \mathbb{S}^2}|u'-v'|^{\gamma- \frac{3p}{q}} |(u'-v')\cdot \omega|^{-\frac{2 p}{q}}
w_{\widetilde{\vartheta}}^{-1}(u')|w_{\widetilde{\vartheta}}\alpha^\beta_{f, \varepsilon}\partial f(v')|^p\mathrm{d}u' \mathrm{d} v'\mathrm{d}\omega\\
= & \int_{\mathbb{R}^3 \times \mathbb{S}^2}|u-v|^{\gamma- \frac{3p}{q}} |(u-v)\cdot \omega|^{-\frac{2 p}{q}}
w_{\widetilde{\vartheta}}^{-1}(u)\mathrm{d}u \mathrm{d}\omega \int_{\mathbb{R}^3}|w_{\widetilde{\vartheta}}\alpha^\beta_{f, \varepsilon}\partial f(v)|^p \mathrm{d}v.
\end{align*}
We also denote that $(u \cdot \omega)\omega = u_\parallel$, $u=u_\parallel+u_\perp$, $(v \cdot \omega)\omega = v_\parallel$, $v=v_\parallel+v_\perp$, then $|u \cdot \omega|= |u_\parallel|$, $|v \cdot \omega|= |v_\parallel|$, $|u|^2=|u_\parallel|^2+|u_\perp|^2$,
and
\begin{align*}
\mathrm{d}u\mathrm{d}\omega= \frac{2 \mathrm{d} u_\parallel \mathrm{d}u_\perp}{|u_\parallel|^2}= 2 \mathrm{d}|u_\parallel|\mathrm{d}u_\perp,
\end{align*}
and
\begin{align*}
|u-v|\geq |(u-v)\cdot \omega|\geq \big||v \cdot \omega|-|u \cdot \omega|\big|= \big||v_\parallel|-|u_\parallel|\big|.
\end{align*}
Hence, it holds
\begin{align*}
&\int_{\mathbb{R}^3 \times \mathbb{S}^2}|u-v|^{\gamma- \frac{3p}{q}} |(u-v)\cdot \omega|^{-\frac{2 p}{q}}
w_{\widetilde{\vartheta}}^{-1}(u)\mathrm{d}u \mathrm{d}\omega\\
\lesssim & \int \left||v_\parallel|-|u_\parallel|\right|^{\gamma-\frac{5 p}{q}}e^{-\widetilde{\vartheta}|u_\parallel|^2}\mathrm{d}|u_\parallel| \int e^{-\widetilde{\vartheta}|u_\perp|^2}\mathrm{d}u_\perp.
\end{align*}
But $\gamma-\frac{5p}{q}<-1$ $(\mathrm{as}\,\,\, 3<p<6-2\varpi, \frac{1}{p}+\frac{1}{q}=1)$, the above integration is not finite.
It means that if we try to eliminate the singularity of term $A$, then the singularity of term $B$ will become too strong to dominate, vice versa.

$\bullet$ We see from Section 4 that if $\|w_{\widetilde{\vartheta}}f_0\|_\infty+ \sup_{s \geq 0}\|e^{\lambda_0 s^\rho}w_{\widetilde{\vartheta}}g(s)\|_\infty\leq \delta^* M$, $\delta\ll 1$, then there exists $\widehat{T} = (1-\delta^*)/C$, where $C$ is independent on $\delta^*$, $M$ and $\rho$, such that the solution $w_{\widetilde{\vartheta}}f$ exists on $[0, \widehat{T}]$. Note that $\widehat{T}$ is independent on the value of $M$. Furthermore, $\widehat{T}$ is the largest time that $\|w_{\widetilde{\vartheta}}f(t)\|_\infty+ \sup_{s\geq 0}\|e^{\lambda_0 s^\rho}w_{\widetilde{\vartheta}}g(s)\|_\infty$ attains $M$. By using the classical $L^2-L^\infty$ method, we prove that $\widehat{T}$ is actually not the largest time, so we can extend the solution to $\widehat{T}+$ through a bootstrap argument. In fact, for any given finite $T>0$, we can choose $M_T$ sufficiently small and finally extend the solution to $[0, T]$.

$\bullet$ There are also many other small and not so essential differences, for example, we use the Gamma function for controlling the integration of $e^{-\tau^\rho}$ over $\tau \in [0,t]$ (see the proof of Proposition \ref{prop 4}). We will not list them all here.

This paper is organized as follows. In Section 2, we list some basic lemmas about the collision operators with soft potential and the kinetic weight $\alpha_{f,\varepsilon}$.
In Section 3, we establish some important \emph{a} \emph{priori} estimates for the solutions.
In Section 4, we build up the local existence and uniqueness of strong solutions to VPB system.
Section 5 is devoted to extending the solutions to any $T>0$ by using $L^2-L^\infty$ method and the bootstrap argument. Finally, we obtain the proof of Theorem \ref{Thm1}.

\section{Preliminary}

\begin{lemma}
For any multi-index $\beta$ and any $0<s_1<s_2<1$,
\begin{equation}\label{k2x}
|\partial_v^\beta \mathbf{k}_2^\chi (v,u)|\leq C \frac{\exp\left( -\frac{s_2}{8}|u-v|^2- \frac{s_1}{8}\frac{(|v|^2-|u|^2)^2}{|v-u|^2}\right)}{|v-u|(1+|v|+|u|)^{1-\gamma}}.
\end{equation}
Here $C>0$ will depend on $s_1$, $s_2$ and $\beta$.
\end{lemma}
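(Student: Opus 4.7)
The plan is to obtain the explicit integral representation of $\mathbf{k}_2^\chi(v,u)$ via Grad's change of variables and then estimate it and its $v$-derivatives directly. I split the defining integral for $K_2^\chi f$ into the piece involving $f(v')$ and the piece involving $f(u')$; they are handled symmetrically, so I describe only the former. Setting $V=u-v$, $V_\parallel=(V\cdot\omega)\omega$, $V_\perp=V-V_\parallel$, one has $v'=v+V_\parallel$ and $u'=v+V_\perp$. The standard change of variables $(u,\omega)\mapsto(v',V_\perp)$ has Jacobian $|v'-v|^{-2}$, and combined with the angular bound $b_0(\theta)\leq C\cos\theta=C|V_\parallel|/|V|$ expresses $\mathbf{k}_2^\chi(v,u)$, after relabelling $v'$ as $u$, as a surface integral over the two-plane $(u-v)^\perp$ with variable $V_\perp$. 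In the new variables $\omega=(u-v)/|u-v|$ and the integrand carries the prefactor $|u-v|^{-1}|V|^{\gamma-1}\chi(|V|)$ with $|V|^2=|u-v|^2+|V_\perp|^2$, together with the exponential $\exp(-\tfrac14(|u_{\mathrm{old}}|^2+|u'_{\mathrm{old}}|^2))$, where $u_{\mathrm{old}}=u+V_\perp$ and $u'_{\mathrm{old}}=v+V_\perp$ with $V_\perp\perp(u-v)$.

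The crux of the argument is the rewriting of this exponent. Using $V_\perp\perp(u-v)$ (so that the perpendicular parts of $u$ and $v$ coincide and equal $v_\perp:=v-(v\cdot\omega)\omega$), direct expansion followed by completion of the square in $V_\perp$, together with the elementary algebraic identity
\begin{equation*}
(v\cdot\omega)^2=\frac{(|u|^2-|v|^2-|u-v|^2)^2}{4|u-v|^2}\qquad\text{with}\quad\omega=\frac{u-v}{|u-v|},
\end{equation*}
produce the clean decomposition
\begin{equation*}
\tfrac14\bigl(|u_{\mathrm{old}}|^2+|u'_{\mathrm{old}}|^2\bigr)=\frac{|u-v|^2}{8}+\frac{(|u|^2-|v|^2)^2}{8|u-v|^2}+\tfrac12|V_\perp+v_\perp|^2.
\end{equation*}
This identity isolates exactly the two exponential factors required by the lemma and leaves a translated Gaussian in $V_\perp$. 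Integrating the translated Gaussian against $|V|^{\gamma-1}$ over the two-plane $(u-v)^\perp$ then produces the factor $(1+|v|+|u|)^{\gamma-1}$ via the classical polynomial-tail estimate of Grad/Caflisch type: the Gaussian concentrates $V_\perp$ near $-v_\perp$, so $|V|^2\sim |u-v|^2+|v_\perp|^2$ on the effective support, and a case split according to whether $v$ is nearly aligned with $\omega$ (in which case the extracted $e^{-(|u|^2-|v|^2)^2/(8|u-v|^2)}$ supplies the missing polynomial decay) recovers the desired power.

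For the derivatives $\partial_v^\beta$, I differentiate under the integral sign. Each derivative produces either a polynomial in $v$, $u$, $V_\perp$ (from the Gaussian exponents and from $v_\perp$) or an inverse power of $|u-v|$ (from $\chi(|u-v|)|u-v|^{\gamma-1}$ and from $(|u|^2-|v|^2)^2/|u-v|^2$). Polynomial factors are absorbed into the Gaussians by Young's inequality at the cost of slightly shrinking the Gaussian coefficients, which is precisely the role of the two parameters $0<s_1<s_2<1$ in the statement: the coefficient $s_1$ on the more sensitive $(|v|^2-|u|^2)^2/|v-u|^2$ term must be taken strictly smaller than $s_2$ to absorb the extra inverse powers of $|u-v|$ generated by its differentiation, while $s_2$ governs the cleaner $|u-v|^2$ term. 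All inverse powers of $|u-v|$ that arise remain harmless because $\chi(|u-v|)$ forces $|u-v|\geq\varepsilon>0$ on the support. The $f(u')$ piece of $K_2^\chi$ is treated by the symmetric substitution $(u,\omega)\mapsto(u',V_\perp)$ and produces an estimate of the same form. The main obstacle is the exponent identity in the second paragraph: every constant in the stated bound is traced back to that computation, and verifying the exact cancellation of the cross terms between $|u|^2-|v|^2$ and $|u-v|^2$ is where the real work rests.
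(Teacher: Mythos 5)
Your framework — Grad's change of variables to the Carleman representation, the completion of the square in $V_\perp$ to isolate the two exponential factors, and the case split controlling the residual Gaussian integral — is exactly the approach used in the cited reference (Lemma~1 of \cite{Strain}), which is all the paper itself does here, since it gives no proof. Your exponent identity is in fact correct: writing $a=u\cdot\omega$, $b=v\cdot\omega$ and using $u_\perp=v_\perp$, it reduces to $\tfrac14(a^2+b^2)=\tfrac18(a-b)^2+\tfrac18(a+b)^2$, and the cross terms cancel cleanly. The $\beta=0$ bound follows from your sketch.

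The derivative case, however, contains a concrete misstep. After the Carleman substitution $(u_{\mathrm{orig}},\omega)\mapsto(v',V_\perp)$ and the relabelling of $v'$ as $u$, the cutoff enters as $\chi(|V|)$ with $|V|^2=|u-v|^2+|V_\perp|^2$, \emph{not} as $\chi(|u-v|)$. When $|u-v|$ is small, $\chi$ only forces $|V_\perp|\gtrsim\varepsilon$; it does not bound $|u-v|$ away from zero. So the claim that ``all inverse powers of $|u-v|$ that arise remain harmless because $\chi(|u-v|)$ forces $|u-v|\geq\varepsilon$'' is false in the transformed variables, and as written your argument does not explain why $\partial_v^\beta$ applied to the manifest $|u-v|^{-1}$ Jacobian factor (and to $(|u|^2-|v|^2)^2/|u-v|^2$) does not deteriorate the singularity to $|u-v|^{-1-|\beta|}$. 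The lemma asserts the \emph{same} $|u-v|^{-1}$ singularity for every $\beta$, so this is precisely the point that requires a dedicated argument — either a cancellation in the differentiated Carleman integrand once the $v$-dependence of the orthogonal frame defining $(u-v)^\perp$ is tracked, or a different representation of the kernel. Your sketch silently assumes this away by misidentifying what $\chi$ cuts off; that is the gap that needs to be filled to make the proof complete.
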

One can refer to Lemma 1 in \cite{Strain} for the proofs. In this paper, we shall use the forms
\begin{align*}
| \mathbf{k}_2^\chi (v,u)|\lesssim &\frac{\exp\left( -\frac{s_0}{8}|u-v|^2- \frac{s_0}{8}\frac{(|v|^2-|u|^2)^2}{|v-u|^2}\right)}{|v-u|(1+|v|+|u|)^{1-\gamma}},
\end{align*}
and
\begin{align*}
|\nabla_v \mathbf{k}_2^\chi (v,u)|\lesssim &\frac{\exp\left( -\frac{s_0}{8}|u-v|^2- \frac{s_0}{8}\frac{(|v|^2-|u|^2)^2}{|v-u|^2}\right)}{|v-u|(1+|v|+|u|)^{1-\gamma}},
\end{align*}
for $s_0=\min\{s_1, s_2\}$.

\begin{lemma}\label{kwh}
Assume $-3 < \gamma <0$, $\vartheta>0$, $\widetilde{\vartheta}(t)=\vartheta(1+(1+t)^{-\theta})$ and $\theta\geq 0$. It holds that
\begin{align*}
w_{\widetilde{\vartheta}}K^{1-\chi}\left( \frac{|h|}{w_{\widetilde{\vartheta}}}\right) \leq C \mu^{\frac{1}{4}}(v)\varepsilon^{\gamma+3}\|h\|_\infty,
\end{align*}
and
\begin{align*}
w_{\widetilde{\vartheta}}\int_{\mathbb{R}^3}\mathbf{k}^\chi (v, u)\frac{e^{\overline{\epsilon}|v-u|^2}|h(u)|}{w_{\widetilde{\vartheta}}(u)}\mathrm{d}u \leq C \langle v\rangle^{\gamma-2}\|h\|_\infty,
\end{align*}
where $\overline{\epsilon}>0$ is sufficiently small and $\langle v\rangle=\sqrt{1+|v|^2}$. $\chi(s)=1$ if $s\geq 2\varepsilon$; $\chi(s)=0$ if $s \leq \varepsilon$.
\end{lemma}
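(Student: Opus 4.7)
The plan is to split $K^{1-\chi}=K_2^{1-\chi}-K_1^{1-\chi}$ and treat each piece separately for the first inequality. The cutoff $1-\chi(|u-v|)$ localizes the integration to $\{|u-v|\le 2\varepsilon\}$, so for small $\varepsilon$ all four velocities $|u|,|v|,|u'|,|v'|$ are comparable (within $O(\varepsilon)$), using the collisional energy identity $|u|^2+|v|^2=|u'|^2+|v'|^2$. After substituting $f=|h|/w_{\widetilde{\vartheta}}$, the prefactor arising in $K_1^{1-\chi}$, namely $w_{\widetilde{\vartheta}}(v)\sqrt{\mu(v)}\cdot\sqrt{\mu(u)}/w_{\widetilde{\vartheta}}(u)$, is bounded by $C\mu^{1/4}(v)$ provided $\vartheta$ is small enough that $\sqrt{\mu}\,w_{\widetilde{\vartheta}}\le\mu^{1/4}$; the analogous bound with $u'$ and $v'$ in place of $u$ handles the $K_2^{1-\chi}$ piece. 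Pulling $\|h\|_\infty$ and the $\mu^{1/4}(v)$ factor out, the remaining volume integral is $\int_{|u-v|\le 2\varepsilon}|u-v|^\gamma\,du \le C\int_0^{2\varepsilon}r^{\gamma+2}\,dr = C\varepsilon^{\gamma+3}$, which converges precisely because $\gamma>-3$.

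For the second inequality, I would feed in the pointwise bound on $\mathbf{k}^\chi$ coming from Lemma \ref{k2x}, namely a Gaussian in $|u-v|^2$ together with the sharper factor $\exp\!\bigl(-\tfrac{s_0}{8}(|v|^2-|u|^2)^2/|v-u|^2\bigr)$ divided by $|v-u|(1+|v|+|u|)^{1-\gamma}$. The weight ratio is $w_{\widetilde{\vartheta}}(v)/w_{\widetilde{\vartheta}}(u)=\exp\bigl(\widetilde{\vartheta}(|v|^2-|u|^2)\bigr)$, and Young's inequality gives
\[
\widetilde{\vartheta}(|v|^2-|u|^2) \;\le\; \delta\,\frac{(|v|^2-|u|^2)^2}{|v-u|^2} \;+\; \frac{\widetilde{\vartheta}^2}{4\delta}|v-u|^2.
\]
Choosing $\delta$ a small fraction of $s_0$ and taking $\overline{\epsilon}$ and $\vartheta$ sufficiently small, both the weight ratio and the factor $e^{\overline{\epsilon}|v-u|^2}$ are absorbed into the original Gaussians, yielding a kernel of the same shape with smaller but still positive constants $c<s_0/8$.

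It then remains to establish
\[
\int_{\mathbb{R}^3}\frac{\exp\!\bigl(-c|u-v|^2-c(|v|^2-|u|^2)^2/|v-u|^2\bigr)}{|v-u|(1+|v|+|u|)^{1-\gamma}}\,du \;\lesssim\; \langle v\rangle^{\gamma-2}.
\]
For $|v|\lesssim 1$ this is immediate: $1/|v-u|$ is locally integrable in $\mathbb{R}^3$, the denominator $(1+|v|+|u|)^{1-\gamma}$ is bounded below, and $e^{-c|u-v|^2}$ provides integrability at infinity. For large $|v|$ I would change variables $u=v+z\,v/|v|+w$ with $w\perp v$, so that $|u-v|^2=z^2+|w|^2$ and $|v|^2-|u|^2=-2|v|z-z^2-|w|^2$. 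Restricting first to the bulk region $|u|\le 2|v|$, the factor $(1+|v|+|u|)^{1-\gamma}$ contributes $\sim|v|^{\gamma-1}$, while the second Gaussian behaves, for $|z|\ll|v|$, like $\exp\!\bigl(-c|v|^2 z^2/(z^2+|w|^2)\bigr)$; integrating $z$ against $(z^2+|w|^2)^{-1/2}$ produces an additional $|v|^{-1}$. The tail $|u|\ge 2|v|$ is exponentially small from $e^{-c|u-v|^2}$. Combining the two regions gives the claimed $\langle v\rangle^{\gamma-2}$.

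The main obstacle is precisely this last step: the $(1+|v|+|u|)^{1-\gamma}$ prefactor alone only yields $\langle v\rangle^{\gamma-1}$, and the extra $\langle v\rangle^{-1}$ must be extracted from the interplay between the $1/|v-u|$ singularity and the degeneration of $(|v|^2-|u|^2)^2/|v-u|^2$ near the sphere $|u|=|v|$. This requires the anisotropic change of variables above rather than a naive H\"older split, which is the sharp part of the argument.
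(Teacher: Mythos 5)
Your argument is correct and is essentially the proof the paper defers to (Lemma 2.2 of Liu--Yang, which rests on the Strain--Guo kernel estimate): integrating $|u-v|^\gamma$ over $\{|u-v|\le 2\varepsilon\}$ to produce $\varepsilon^{\gamma+3}$ for the $1-\chi$ part, absorbing the weight ratio $w_{\widetilde{\vartheta}}(v)/w_{\widetilde{\vartheta}}(u)$ and $e^{\overline{\epsilon}|v-u|^2}$ into the two Gaussians via Young's inequality, and the anisotropic change of variables near the sphere $|u|=|v|$ that extracts the extra factor $\langle v\rangle^{-1}$ beyond the $(1+|v|+|u|)^{\gamma-1}$ decay. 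The only cosmetic omission is the $\mathbf{k}_1^\chi$ contribution to the second bound, which is trivial because of the factors $\sqrt{\mu(u)}\sqrt{\mu(v)}$.
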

The proof is similar to Lemma 2.2 in \cite{shuangqian}, we omit the details.

\begin{lemma}\label{gamma}
It holds that
\begin{align*}
\|\nu^{-1}w_{\widetilde{\vartheta}}\Gamma(f_1, f_2)\|_\infty \leq C \|w_{\widetilde{\vartheta}}f_1\|_\infty \|w_{\widetilde{\vartheta}}f_2\|_\infty.
\end{align*}
\end{lemma}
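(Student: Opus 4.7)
The plan is to split $\Gamma = \Gamma_{\mathrm{gain}} - \Gamma_{\mathrm{loss}}$ according to \eqref{Gamma} and bound each piece pointwise in $v$ by $C\,\nu(v)\,w_{\widetilde{\vartheta}}^{-1}(v)\,\|w_{\widetilde{\vartheta}}f_1\|_\infty\|w_{\widetilde{\vartheta}}f_2\|_\infty$. After multiplication by $\nu^{-1}w_{\widetilde{\vartheta}}$ and taking the essential supremum in $v$, this will yield the lemma. The estimate is pointwise in $(t,x)$, so no spatial or temporal structure plays a role.

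First I would handle the loss term, which is the easier one. Using $|f_1(u)|\leq \|w_{\widetilde{\vartheta}}f_1\|_\infty\,w_{\widetilde{\vartheta}}^{-1}(u)$ and $|f_2(v)|\leq \|w_{\widetilde{\vartheta}}f_2\|_\infty\,w_{\widetilde{\vartheta}}^{-1}(v)$, the two $L^\infty$-norms and $w_{\widetilde{\vartheta}}^{-1}(v)$ factor out and the remaining integral is $\int |u-v|^\gamma b_0(\theta)\sqrt{\mu(u)}\,w_{\widetilde{\vartheta}}^{-1}(u)\,du\,d\omega$. Since $0<\vartheta\ll 1$ implies $\widetilde{\vartheta}<1/4$, the product $\sqrt{\mu(u)}w_{\widetilde{\vartheta}}^{-1}(u)=e^{-(1/4+\widetilde{\vartheta})|u|^2}$ retains Gaussian decay, and $\int_{\mathbb{S}^2} b_0(\theta)\,d\omega$ is finite by the angular cutoff in \eqref{soft}. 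A standard soft-potential estimate $\int_{\mathbb{R}^3}|u-v|^\gamma e^{-\delta |u|^2}\,du\lesssim \langle v\rangle^\gamma\sim \nu(v)$, valid for $\gamma\in(-3,0)$ and $\delta>0$, then closes the loss bound.

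For the gain term the subtlety is that the weights $w_{\widetilde{\vartheta}}^{-1}$ land on the post-collision velocities. The key algebraic step is the elastic energy identity $|u'|^2+|v'|^2=|u|^2+|v|^2$, which gives
\[
\sqrt{\mu(u)}\,w_{\widetilde{\vartheta}}^{-1}(u')\,w_{\widetilde{\vartheta}}^{-1}(v')=e^{-|u|^2/4}\,e^{-\widetilde{\vartheta}(|u|^2+|v|^2)}=e^{-(1/4+\widetilde{\vartheta})|u|^2}\,w_{\widetilde{\vartheta}}^{-1}(v).
\]
Bounding $|f_1(u')f_2(v')|\leq \|w_{\widetilde{\vartheta}}f_1\|_\infty\|w_{\widetilde{\vartheta}}f_2\|_\infty\,w_{\widetilde{\vartheta}}^{-1}(u')w_{\widetilde{\vartheta}}^{-1}(v')$ and extracting $w_{\widetilde{\vartheta}}^{-1}(v)$ reduces the gain estimate to exactly the same integral as in the loss case, which is again $\lesssim \nu(v)$.

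Combining the two bounds gives $w_{\widetilde{\vartheta}}(v)|\Gamma(f_1,f_2)(v)|\lesssim \nu(v)\|w_{\widetilde{\vartheta}}f_1\|_\infty\|w_{\widetilde{\vartheta}}f_2\|_\infty$, and dividing by $\nu(v)$ and taking $\sup_v$ finishes the proof. The only genuine technical input is the soft-potential integral $\int_{\mathbb{R}^3}|u-v|^\gamma e^{-\delta|u|^2}\,du\lesssim \langle v\rangle^\gamma$ for $-3<\gamma<0$, which I would verify by splitting into $\{|u-v|\leq 1\}$ (where the singularity is integrable since $\gamma>-3$) and $\{|u-v|\geq 1\}$ (where the Gaussian dominates the polynomial factor, with the decay in $\langle v\rangle$ coming from a further split according to whether $|u|\leq |v|/2$ or $|u|\geq |v|/2$). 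I expect no real obstacle beyond this classical estimate; the only quantitative condition is the smallness $\widetilde{\vartheta}<1/4$, which is already ensured by the standing assumption $0<\vartheta\ll 1$.
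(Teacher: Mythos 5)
Your proof is correct and follows essentially the same route as the paper: bound the loss term by pulling out the weighted $L^\infty$ norms and using the Gaussian-damped soft-potential integral $\int|u-v|^\gamma e^{-\delta|u|^2}\,du\lesssim\langle v\rangle^\gamma\sim\nu(v)$, and handle the gain term via the energy identity $|u'|^2+|v'|^2=|u|^2+|v|^2$ to transfer the weights from $(u',v')$ back to $(u,v)$. The only cosmetic remark is that the smallness $\widetilde{\vartheta}<1/4$ is not actually needed here, since the exponents come out as $-(1/4+\widetilde{\vartheta})|u|^2$ rather than $-(1/4-\widetilde{\vartheta})|u|^2$.
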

The proof is similar to that of Lemma 5 in \cite{Guo2010} for hard potential case. For sake of being self-contained, we briefly sketch it here.
\begin{proof}
Consider the expression of $\Gamma(f_1,f_2)$ in \eqref{Gamma}. For the second term $\Gamma_{\mathrm{loss}}(f_1, f_2)$, we have
\begin{align*}
w_{\widetilde{\vartheta}}(v)\Gamma_{\mathrm{loss}}(f_1, f_2)(v)\lesssim \int |u-v|^\gamma \sqrt{\mu(u)}\mathrm{d}u \big\|w_{\widetilde{\vartheta}}f_1\big\|_\infty \big\|w_{\widetilde{\vartheta}}f_2\big\|_\infty\lesssim \langle v\rangle^\gamma \big\|w_{\widetilde{\vartheta}}f_1\big\|_\infty \big\|w_{\widetilde{\vartheta}}f_2\big\|_\infty.
\end{align*}
For the first term $\Gamma_{\mathrm{gain}}$ in \eqref{Gamma}, by $|u|^2+|v|^2=|u'|^2+|v'|^2$ and $w_{\widetilde{\vartheta}}(\cdot)=e^{\widetilde{\vartheta}|\cdot|^2}$, we have $w_{\widetilde{\vartheta}}(v)\leq w_{\widetilde{\vartheta}}(u')w_{\widetilde{\vartheta}}(v')$ and
\begin{align*}
w_{\widetilde{\vartheta}}(v)\Gamma_{\mathrm{gain}}(f_1, f_2)(v)\leq \int |u-v|^\gamma \sqrt{\mu(u)}w_{\widetilde{\vartheta}}f_1(u')w_{\widetilde{\vartheta}}f_2(v')\mathrm{d}u \lesssim \langle v\rangle^\gamma \big\|w_{\widetilde{\vartheta}}f_1\big\|_\infty \big\|w_{\widetilde{\vartheta}}f_2\big\|_\infty.
\end{align*}
This completes the proof since $\nu(v)\sim \langle v\rangle^\gamma$.
\end{proof}
\begin{lemma}[\!\!\cite{Kim}, Lemma 10]\label{lemma10}
Assume that $E(t, x)\in C_x^1$ is given, and
\begin{align*}
\sup_{t\geq 0}e^{\Lambda_2 t}\|\nabla_x E(t)\|_\infty \leq \delta_2 \ll 1,
\end{align*}
then, for $\Lambda_2 +\delta_2+\varepsilon \leq 1$, there exists a $C>0$ such that
\begin{align*}
|\nabla_v x(s; t, x, v)|\leq C e^{C \delta_2 (\Lambda_2)^{-2}}|t-s|,
\end{align*}
for all $s$ satisfying $\max\{t-t_b(t, x, v),-\varepsilon\}\leq s \leq t$.
\end{lemma}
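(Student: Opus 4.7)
\textbf{Proof plan for Lemma \ref{lemma10}.} The plan is to differentiate the characteristic ODE system with respect to the velocity parameter $v$, convert the resulting variational system into integral form, and close the estimate via a Gronwall-type argument that exploits the exponential decay of $\|\nabla_x E(t)\|_\infty$.

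First, I would set $X(s) := \nabla_v x(s; t, x, v)$ and $V(s) := \nabla_v v(s; t, x, v)$, viewed as $3\times 3$ Jacobian matrices. Differentiating the characteristic system $\dot x(s) = v(s)$, $\dot v(s) = E(s, x(s))$ in $v$ yields the second-order linear variational equation
\begin{align*}
\ddot X(s) = \nabla_x E(s, x(s)) \, X(s),
\end{align*}
with terminal data $X(t) = 0$, $\dot X(t) = V(t) = I$. Integrating twice backward from $s=t$ and switching the order of integration produces the integral identity
\begin{align*}
X(s) = -(t-s) I + \int_s^t (\sigma - s) \, \nabla_x E(\sigma, x(\sigma)) \, X(\sigma)\, d\sigma,
\end{align*}
so that $|X(s)| \leq (t-s) + \int_s^t (\sigma - s) |\nabla_x E(\sigma, x(\sigma))| \, |X(\sigma)| \, d\sigma$.

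Next, I would divide by $(t-s)$ and apply Gronwall's inequality to $Y(s) := |X(s)|/(t-s)$ (for $s<t$), using the crude bound $(\sigma-s)/(t-s) \leq 1$, to obtain
\begin{align*}
Y(s) \leq \exp\left(\int_s^t (t-\sigma)\, |\nabla_x E(\sigma, x(\sigma))|\, d\sigma\right).
\end{align*}
To control the integral in the exponent, I would split the $\sigma$-integral into $[\max\{s,0\}, t]$ and (if $s<0$) $[s, 0]$. On the first piece, using $\|\nabla_x E(\sigma)\|_\infty \leq \delta_2 e^{-\Lambda_2 \sigma}$ together with integration by parts and the elementary identity $\int_0^\infty \sigma e^{-\Lambda_2 \sigma}\, d\sigma = \Lambda_2^{-2}$, the integral is bounded by $C \delta_2 \Lambda_2^{-2}$ up to terms that are controlled because of the smallness constraint $\Lambda_2 + \delta_2 + \varepsilon \leq 1$. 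On the second piece, the length of the interval is at most $\varepsilon$, and continuity of $\nabla_x E$ together with the assumption at $t=0$ gives an $O(\delta_2 \varepsilon)$ contribution, absorbed again by $\Lambda_2 + \delta_2 + \varepsilon \leq 1$.

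Putting these bounds together gives $Y(s) \leq C e^{C\delta_2 \Lambda_2^{-2}}$, which is exactly the claimed estimate $|X(s)| \leq C e^{C\delta_2 \Lambda_2^{-2}} |t-s|$. The main obstacle is the careful handling of the endpoint behavior of the characteristic in the possibly negative time range $s \in [-\varepsilon, 0]$, where the exponential weight $e^{-\Lambda_2 \sigma}$ ceases to give decay; this is precisely what forces the constraint $\Lambda_2 + \delta_2 + \varepsilon \leq 1$ and dictates why the result is stated under the additional smallness hypothesis $t+1 \geq t_b$ imposed earlier in the paper, so that the backward trajectory $x(s)$ stays in $\Omega$ and the variational system remains well-defined throughout the integration interval.
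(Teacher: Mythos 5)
The paper does not prove this lemma; it simply imports it from \cite{Kim} (Lemma 10 there), so your proposal can only be judged on its own merits. Your overall strategy — differentiate the characteristics in $v$, write the variational system in integral form, and run a Gronwall argument — is the right one, and your integral identity
$X(s)=-(t-s)I+\int_s^t(\sigma-s)\nabla_xE(\sigma,x(\sigma))X(\sigma)\,\mathrm{d}\sigma$
is correct. However, there is a genuine quantitative error in how you close the Gronwall step. After dividing by $(t-s)$ the kernel is $\frac{(\sigma-s)(t-\sigma)}{t-s}\,|\nabla_xE(\sigma)|$, and you discard the factor $\frac{\sigma-s}{t-s}\le 1$, leaving the weight $(t-\sigma)$. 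You then claim
\begin{align*}
\int_s^t(t-\sigma)\,\|\nabla_xE(\sigma)\|_\infty\,\mathrm{d}\sigma\le C\delta_2\Lambda_2^{-2}
\end{align*}
via the identity $\int_0^\infty\sigma e^{-\Lambda_2\sigma}\mathrm{d}\sigma=\Lambda_2^{-2}$. That identity applies to the weight $\sigma$ (distance from the origin), not to $t-\sigma$: for $s=0$ and large $t$ one has $\int_0^t(t-\sigma)e^{-\Lambda_2\sigma}\mathrm{d}\sigma\sim t\,\Lambda_2^{-1}$, because $(t-\sigma)$ is largest exactly where $e^{-\Lambda_2\sigma}$ is largest. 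So your exponent grows linearly in $t$ and the claimed $t$-uniform constant $Ce^{C\delta_2\Lambda_2^{-2}}$ does not follow.

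The fix is to keep the other factor instead: since $t-\sigma\le t-s$, one has $\frac{(\sigma-s)(t-\sigma)}{t-s}\le\sigma-s\le\sigma-s_0$ with $s_0:=\max\{t-t_b,-\varepsilon\}$, and then
\begin{align*}
\int_{s_0}^t(\sigma-s_0)\,\delta_2e^{-\Lambda_2\sigma}\,\mathrm{d}\sigma\le\delta_2e^{-\Lambda_2 s_0}\Lambda_2^{-2}\le\delta_2e^{\Lambda_2\varepsilon}\Lambda_2^{-2}\lesssim\delta_2\Lambda_2^{-2},
\end{align*}
where the last step uses $\Lambda_2+\delta_2+\varepsilon\le1$; this is precisely where that hypothesis and the factor $\Lambda_2^{-2}$ enter. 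With a kernel dominated by an $s$-independent function $k(\sigma)=(\sigma-s_0)|\nabla_xE(\sigma)|$, the standard Gronwall lemma then gives $Y(s)\le\exp\bigl(\int_{s_0}^tk\bigr)\le Ce^{C\delta_2\Lambda_2^{-2}}$ as claimed. (Two smaller points: with a kernel depending on both $s$ and $\sigma$ you should either dominate it by an $s$-independent one, as above, or iterate the Volterra inequality — you do neither explicitly; and your closing remark about $t+1\ge t_b$ refers to a different lemma of \cite{Kim} on non-grazing exit velocities and is not a hypothesis here.)
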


\begin{lemma}[\!\!\cite{Kim}, Proposition 2]\label{alpha beta}
Assuming that $E(t, x)\in C_x^1$ is given, and
\begin{align*}
& n(x)\cdot E(t, x)=0 \,\,\mathrm{for}\,\,x\in \partial \Omega, \,t\geq 0,\\
&\sup_{t\geq 0}e^{\Lambda_1 t}\|\nabla_x E(t)\|_\infty \leq \delta_1 \ll 1, \\
&
\sup_{t\geq 0}e^{\Lambda_2 t}\|\nabla_x E(t)\|_\infty \leq \delta_2 \ll 1.
\end{align*}
Then for all $0<\sigma<1$ and $N>1$ and for all $s\geq 0$, $x \in \overline{\Omega}$, it holds
\begin{align*}
\int_{|u|\leq N}\frac{\mathrm{d}u}{\alpha^\sigma_{f, \varepsilon}(s, x, u)}\lesssim 1,
\end{align*}
and for any $0<\kappa \leq 2$, it holds
\begin{align*}
\int_{|u|\geq N}\frac{e^{-C|v-u|^2}}{|v-u|^{2-\kappa}}\frac{1}{\alpha^\sigma_{f, \varepsilon}(s, x, u)}\mathrm{d}u \lesssim 1.
\end{align*}
\end{lemma}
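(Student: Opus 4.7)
My plan is to decompose the integration by the behavior of the cutoff $\widetilde{\chi}$ in the definition of $\alpha_{f,\varepsilon}$ and to reduce the boundary-reaching contribution to a surface integral on $\partial\Omega$ via a Jacobian computation. Write $u$-space as $I\cup B$ with $I=\{u:s-t_b(s,x,u)<-\varepsilon\}$ and $B=\{u:s-t_b(s,x,u)\geq-\varepsilon\}$. On $I$ the cutoff vanishes, so $\alpha_{f,\varepsilon}\equiv 1$ and both contributions are controlled trivially: $|\{|u|\leq N\}\cap I|\leq CN^3$ for the first inequality, and $\int\frac{e^{-C|v-u|^2}}{|v-u|^{2-\kappa}}\mathrm{d}u\lesssim 1$ uniformly in $v$ for the second, since $\kappa>0$ makes $|v-u|^{\kappa-2}$ locally integrable in $\mathbb{R}^3$ while the Gaussian handles the tail.

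On $B$, by Lemma 2 of \cite{Kim} combined with the smallness of $\nabla_x E$, the backward exit is non-grazing and $\alpha_{f,\varepsilon}(s,x,u)\gtrsim|n(x_b)\cdot v_b|>0$. I would then reparametrize $u$ by the boundary data $(t_b,x_{b,\parallel})$, where $x_{b,\parallel}$ are local coordinates on $\partial\Omega$. Using Lemma \ref{lemma10} to control the linearization of the flow, a direct computation yields the Jacobian
\begin{equation*}
\mathrm{d}u=\frac{|n(x_b)\cdot v_b|}{t_b^3}\,\bigl(1+O(\delta_1+\delta_2)\bigr)\,\mathrm{d}t_b\,\mathrm{d}S_{x_b}.
\end{equation*}
One factor of $|n(x_b)\cdot v_b|$ cancels against $\alpha^{-\sigma}$, and the constraint $|u|\leq N$ forces $t_b\gtrsim|x-x_b|/N$. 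Convexity of $\Omega$ via \eqref{convex} gives the geometric estimate $|n(x_b)\cdot(x-x_b)|\lesssim d(x,\partial\Omega)+|x-x_b|^2$, so that the resulting surface integral converges uniformly in $x\in\overline{\Omega}$ for any $\sigma\in(0,1)$. For the second inequality the same change of variables is applied on $B\cap\{|u|\geq N\}$; the factor $|v-u|^{\kappa-2}$ is split against $e^{-C|v-u|^2}$ via H\"older, the Gaussian tames the region $|u|\geq N$ (and also any $|v|$-dependence), and the same geometric boundary estimate closes the bound for $\kappa\in(0,2]$.

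The hard part will be rigorously justifying this Jacobian when $E\not\equiv 0$: the backward characteristic is not a straight line, so one must show that $u\mapsto(t_b,x_{b,\parallel})$ remains a local diffeomorphism with the stated form and that the $|n(x_b)\cdot v_b|/t_b^3$ scaling survives the perturbation. This is exactly where the hypotheses $\sup_t e^{\Lambda_j t}\|\nabla_x E(t)\|_\infty\leq\delta_j\ll 1$ enter, via Lemma \ref{lemma10}: they keep the flow within a near-identity perturbation of free transport, preserving transversality at the boundary and a non-degenerate Jacobian, and making the surface integral on $\partial\Omega$ comparable to its free-transport analogue. A secondary technical point is verifying the convex-geometry bound $\int_{\partial\Omega}|n(x_b)\cdot(x-x_b)|^{1-\sigma}|x-x_b|^{\sigma-3}\,\mathrm{d}S_{x_b}\lesssim 1$ uniformly in $x\in\overline{\Omega}$, which uses \eqref{convex} in an essential way through the second-order vanishing of the normal component of $x-x_b$ at the nearest boundary point.
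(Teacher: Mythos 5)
The paper itself gives no proof of this lemma: it is imported verbatim from \cite{Kim} (Proposition 2 there), and the argument in that reference is quite different from yours. There, one applies the velocity lemma along the perturbed characteristics (this is where $n\cdot E=0$ on $\partial\Omega$ and the smallness of $\nabla_x E$ enter), and convexity \eqref{convex} fixes the sign of the Hessian term, yielding the \emph{pointwise} lower bound $\alpha_{f,\varepsilon}(s,x,u)\gtrsim |u\cdot n(x)|$ up to controlled exponential factors. Both integrals then collapse to explicit one-dimensional singularities in the normal component $u_n$: $\int_{|u_n|\le N}|u_n|^{-\sigma}\mathrm{d}u_n<\infty$ for $\sigma<1$, and for the second bound one integrates $u_n$ and $u_\parallel$ separately, using $\int_{\mathbb{R}}e^{-C|v_n-u_n|^2}|u_n|^{-\sigma}\mathrm{d}u_n\lesssim1$ together with $\int_{\mathbb{R}^2}e^{-C|v_\parallel-u_\parallel|^2}|v_\parallel-u_\parallel|^{\kappa-2}\mathrm{d}u_\parallel\lesssim1$ (valid since $|v-u|\ge|v_\parallel-u_\parallel|$ and $\kappa>0$). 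Your route for the \emph{first} inequality, pushing $u$ forward to boundary coordinates $(t_b,x_{b,\parallel})$, is genuinely different and can be made to work: the Jacobian you state is exact for free transport, Lemma \ref{lemma10} does make the perturbed map near-identity, and the surface integral $\int_{\partial\Omega}|n(x_b)\cdot(x-x_b)|^{1-\sigma}|x-x_b|^{\sigma-3}\mathrm{d}S_{x_b}$ is indeed uniformly bounded (note the bound $|n(x_b)\cdot(x-x_b)|\lesssim d(x,\partial\Omega)+|x-x_b|^2$ needs only $C^2$ regularity; convexity is used instead to guarantee a non-grazing exit). Two caveats: on the transition zone $-\varepsilon<s-t_b<0$ the inequality $\alpha_{f,\varepsilon}\gtrsim|n(x_b)\cdot v_b|$ fails where $\widetilde{\chi}$ is small, so you must split further according to $\widetilde{\chi}\gtrless 1/2$ and use $\alpha_{f,\varepsilon}\ge 1-\widetilde{\chi}$ on one piece; and the whole construction is much heavier than the one-line reduction to $|u_n|^{-\sigma}$.

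The genuine gap is in the second inequality. Decoupling $|v-u|^{\kappa-2}$ from $\alpha_{f,\varepsilon}^{-\sigma}$ by H\"older with exponents $(q,q')$ requires $(2-\kappa)q'<3$ for the $|v-u|$ factor and $\sigma q<1$ for the $\alpha$ factor, since the grazing set is codimension one in $u$ and $\int e^{-c|v-u|^2}\alpha_{f,\varepsilon}^{-\sigma q}\mathrm{d}u$ diverges once $\sigma q\ge1$. Combined with $1/q+1/q'=1$ this forces $\sigma+(2-\kappa)/3<1$, i.e.\ $\sigma<(1+\kappa)/3$, which excludes most of the claimed range $0<\sigma<1$, $0<\kappa\le2$ (e.g.\ $\sigma=3/4$, $\kappa=1/2$). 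The mechanism the lemma actually exploits, and which any H\"older decoupling destroys, is that the two singular sets are transversal and of complementary dimension: $\{u=v\}$ is a point, while $\{\alpha_{f,\varepsilon}\approx 0\}$ is essentially a plane $\{u_n=\mathrm{const}\}$, so the product of the singularities is integrable even though neither power can be isolated. You must keep the two factors together and integrate normal and tangential components of $u$ separately (or, in your boundary coordinates, track how $|v-(x-x_b)/t_b|^{\kappa-2}$ interacts with the factor $t_b^{-(3+\sigma)}$, which your sketch leaves unaddressed). This is the same obstruction the authors themselves highlight in the introduction when explaining why a naive H\"older treatment of the $\Gamma^{1-\chi}$ terms fails.
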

Due to the above strict conditions, we shall only apply Lemma \ref{alpha beta} on $[0, T]$ with $T$ a finite time.

\begin{lemma}[\!\!\cite{Kim}, Lemma 3]\label{lemma 3}
For fixed t, a map
\begin{align*}
(x, v)\in \Omega \times \mathbb{R}^3 \mapsto (t-t_b(t, x, v), x_b(t, x, v), v_b(t, x, v))\in \mathbb{R}\times \gamma_-
\end{align*}
is one-to-one and
\begin{align*}
&\left|\det\left(\frac{\partial(t-t_b(t, x, v), x_b(t, x, v), v_b(t, x, v))}{\partial(x, v)}\right)\right| = \frac{1}{|n(x_b(t, x, v))\cdot v_b(t, x, v)|}.
\end{align*}

\end{lemma}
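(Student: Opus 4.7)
The plan is to prove the two assertions separately: first the injectivity of the map, then the Jacobian identity.

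\textbf{Injectivity.} First I would invoke uniqueness of the characteristic ODE. If $(x_1, v_1)$ and $(x_2, v_2)$ in $\Omega \times \mathbb{R}^3$ both map to the same triple $(\tau, y, u) \in \mathbb{R} \times \gamma_-$ (writing $\tau = t - t_b$), then the unique solution of $\dot{x}(s) = v(s)$, $\dot{v}(s) = E(s, x(s))$ with data $(x(\tau), v(\tau)) = (y, u)$, evaluated at $s = t$, must equal both $(x_1, v_1)$ and $(x_2, v_2)$, so $(x_1, v_1) = (x_2, v_2)$. Here $E = -\nabla_x \phi_f \in C^1_x$ makes the characteristic field Lipschitz, so the semigroup $\Phi_{s \to t}$ is a well-defined diffeomorphism in the interior.

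\textbf{Jacobian via the inverse map.} Next, for the Jacobian formula, I would compute the differential of the inverse map
\[
\Psi : (\tau, y, u) \mapsto \bigl(x(t;\tau,y,u),\, v(t;\tau,y,u)\bigr),
\]
from the six-dimensional manifold $\{\tau \le t,\, y \in \partial\Omega,\, n(y)\cdot u < 0\}$ into $\Omega \times \mathbb{R}^3$. By the chain rule, $D\Psi$ factors through the full Hamiltonian flow $D\Phi_{\tau \to t}$ on $\mathbb{R}^3 \times \mathbb{R}^3$, and Liouville's theorem yields $|\det D\Phi_{\tau \to t}| = 1$ because the phase-space vector field $(v, E(s,x))$ is divergence-free ($v$ is independent of $v$ as a function and $E$ is independent of $v$).

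\textbf{Computing the residual determinant.} Fixing an orthonormal frame $(e_1(y), e_2(y))$ of $T_y\partial\Omega$, I would write $D\Psi$ in the basis $(\partial_\tau, \partial_{y^{\parallel}_1}, \partial_{y^{\parallel}_2}, \partial_{u_1}, \partial_{u_2}, \partial_{u_3})$ as $D\Phi_{\tau \to t} \cdot M$, where the columns of the $6\times 6$ matrix $M$ are the initial-time images of the six variations. Differentiating the semigroup identity $\Phi_{\tau+h\to t} \circ \Phi_{\tau\to \tau+h} = \Phi_{\tau\to t}$ at $h=0$ gives $\partial_\tau \Psi = -D\Phi_{\tau\to t}\cdot(u, E(\tau,y))$, so the $\tau$-column of $M$ is $-(u, E(\tau,y))$; the tangential $y$-columns are $(e_i, 0)$, and the $u$-columns are $(0, \hat e_i)$. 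Since the upper-right $3\times 3$ block of $M$ is zero and the lower-right $3\times 3$ block is the identity, block expansion gives
\[
|\det M| = \bigl|\det(-u,\, e_1,\, e_2)\bigr| = |u \cdot (e_1 \times e_2)| = |n(y)\cdot u|.
\]
Therefore $|\det D\Psi| = |n(x_b)\cdot v_b|$, and inverting yields the stated identity.

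\textbf{Main obstacle.} The delicate point will be that the formula is only meaningful where $n(x_b)\cdot v_b \ne 0$; this non-degeneracy is supplied by the convexity hypothesis \eqref{convex} together with the Neumann condition $n\cdot E = 0$ on $\partial\Omega$, which, via Lemma~2 of \cite{Kim} recalled in the text, guarantees $n(x_b)\cdot v_b < 0$ whenever the backward exit time is finite. With this in hand $\Psi$ is a local diffeomorphism onto its image, and the injectivity established in the first step upgrades the local statement to the global one-to-one assertion.
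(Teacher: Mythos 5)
The paper does not prove this lemma; it is quoted verbatim from \cite{Kim} (Lemma~3 there), so there is no in-paper argument to compare against. Your proof is the standard one and is correct: injectivity from forward uniqueness of the characteristic ODE, $|\det D\Phi_{\tau\to t}|=1$ from Liouville's theorem since the phase-space field $(v,E(s,x))$ is divergence-free, and the residual factor $|n(y)\cdot u|$ from the block-triangular matrix $M$ whose upper-left block is $(-u,\,e_1,\,e_2)$, using $e_1\times e_2=\pm n(y)$. One point worth making explicit: before differentiating the forward map you need to know that $t_b$, $x_b$, $v_b$ are themselves $C^1$ in $(x,v)$; this follows from the implicit function theorem applied to the exit condition, and it is exactly the non-degeneracy $n(x_b)\cdot v_b\neq 0$ (guaranteed by convexity \eqref{convex}, the Neumann condition $n\cdot E=0$, and Lemma~2 of \cite{Kim}) that makes that application legitimate — so the hypothesis you defer to the end is in fact needed at the start of the Jacobian computation, not only to make the formula ``meaningful.'' A second, minor bookkeeping remark: the forward map is not onto all of $\mathbb{R}\times\gamma_-$ but only onto the set of $(\tau,y,u)$ with $\tau\le t$ for which the forward trajectory remains in $\Omega$ on $(\tau,t]$; the lemma only claims injectivity, so this does not affect the statement, but your inversion of $\Psi$ should be understood on that image.
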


\begin{lemma}[\!\!\cite{Kim}, Lemmas 11 and 12]
For any $0<\delta<1$, if $(f, \phi_f)$ solves the problem \eqref{perturb eqn}--\eqref{incoming}, then for all $t \geq 0$,
\begin{align*}
\|\phi_f (t)\|_{C^{1, 1-\delta}}(\overline{\Omega})\leq C_{\Omega}\|w_{\widetilde{\vartheta}}f(t)\|_\infty.
\end{align*}
Assume that \eqref{p beta}(about $p$) and \eqref{hessian phi} holds, then for all $t \geq 0$, there exists a $C_1\geq 0$ such that
\begin{align*}
\|\phi_f(t)\|_{C^{2, 1-\frac{3}{p}}(\overline{\Omega})}\leq (C_1)^{1/p}\{\|f(t)\|_p+\|\alpha^\beta_{f, \varepsilon}\nabla_x f(t)\|_p\}.
\end{align*}
\end{lemma}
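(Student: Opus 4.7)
The proof is a standard elliptic regularity argument for the Neumann Poisson problem \eqref{poisson}, with the main novelty being how the kinetic weights $w_{\widetilde{\vartheta}}$ and $\alpha_{f,\varepsilon}^\beta$ are converted into integrability of the charge density $\rho_f(t,x):=\int_{\mathbb{R}^3} f(t,x,v)\sqrt{\mu(v)}\mathrm{d}v$ and its gradient. My plan is to bound $\rho_f$ in $L^\infty$ for the first estimate, and to bound $\nabla_x \rho_f$ in $L^p$ for the second, then invoke $W^{2,p}$/$W^{3,p}$ estimates for the Neumann problem on the $C^3$ convex domain $\Omega$ together with Morrey/Sobolev embeddings $W^{2,p}\hookrightarrow C^{1,1-3/p}$ and $W^{3,p}\hookrightarrow C^{2,1-3/p}$, which require $p>3$.

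For the first estimate I would argue pointwise
\begin{align*}
|\rho_f(t,x)|\le \|w_{\widetilde{\vartheta}} f(t)\|_\infty \int_{\mathbb{R}^3}w_{\widetilde{\vartheta}}^{-1}(v)\sqrt{\mu(v)}\mathrm{d}v \lesssim \|w_{\widetilde{\vartheta}} f(t)\|_\infty,
\end{align*}
since $\widetilde{\vartheta}\le 2\vartheta\ll 1$ makes the $v$-integral finite. Choosing $p=3/\delta$ (and using the compatibility $\int_\Omega \rho_f \mathrm{d}x=0$, which follows from mass conservation and the boundary/initial compatibility assumed in Theorem \ref{Thm1}), the Neumann $W^{2,p}$ estimate gives $\|\phi_f\|_{W^{2,p}}\lesssim \|\rho_f\|_{L^p}\lesssim |\Omega|^{1/p}\|w_{\widetilde{\vartheta}}f\|_\infty$, and Morrey embedding yields $\|\phi_f\|_{C^{1,1-\delta}}\le C_\Omega\|w_{\widetilde{\vartheta}}f(t)\|_\infty$, as claimed.

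For the second estimate I would bring $\nabla_x$ inside the velocity integral, writing $\nabla_x\rho_f=\int_{\mathbb{R}^3}\nabla_x f\sqrt{\mu}\mathrm{d}v$. By H\"older's inequality in $v$, with $1/p+1/q=1$,
\begin{align*}
|\nabla_x\rho_f(t,x)|\le \Bigl(\int_{\mathbb{R}^3}|\alpha_{f,\varepsilon}^\beta\nabla_x f|^p \mathrm{d}v\Bigr)^{1/p} \Bigl(\int_{\mathbb{R}^3}\alpha_{f,\varepsilon}^{-\beta q}(\sqrt{\mu})^q \mathrm{d}v\Bigr)^{1/q}.
\end{align*}
The second factor is bounded uniformly in $(t,x)$ by splitting $\{|v|\le N\}$ and $\{|v|\ge N\}$ and applying Lemma \ref{alpha beta} with $\sigma=\beta q$; this is where the constraint $\beta<(2-\varpi)/(3-\varpi)<1/q$ coming from \eqref{p beta} is used to keep $\beta q<1$. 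The Gaussian $\mu^{q/2}$ provides the $e^{-C|v|^2}$ decay required in the second half of Lemma \ref{alpha beta}. Integrating the $p$-th power over $\Omega$ gives $\|\nabla_x\rho_f\|_{L^p(\Omega)}\lesssim \|\alpha_{f,\varepsilon}^\beta\nabla_x f\|_{L^p(\Omega\times\mathbb{R}^3)}$, and a similar (easier) H\"older argument with the trivial weight $\alpha^0$ yields $\|\rho_f\|_{L^p(\Omega)}\lesssim \|f\|_{L^p(\Omega\times\mathbb{R}^3)}$. Hence $\rho_f\in W^{1,p}(\Omega)$ with norm bounded by $\|f\|_p+\|\alpha^\beta\nabla_x f\|_p$. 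The Neumann $W^{3,p}$ estimate on the $C^3$ domain then gives $\|\phi_f\|_{W^{3,p}}\lesssim \|\rho_f\|_{W^{1,p}}$, and Morrey's inequality produces $\|\phi_f\|_{C^{2,1-3/p}}\le (C_1)^{1/p}\{\|f(t)\|_p+\|\alpha^\beta_{f,\varepsilon}\nabla_x f(t)\|_p\}$.

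The main technical obstacle is the second factor in the H\"older splitting: the $\alpha_{f,\varepsilon}^{-\beta q}$ singularity is integrable in $v$ only under the delicate range of $\beta$ and $p$ built into hypothesis \eqref{p beta}, and Lemma \ref{alpha beta} is itself conditional on the smallness and decay of $\nabla_x E=-\nabla_x^2\phi_f$ (which is presumably supplied by the assumption \eqref{hessian phi}); one must check that these small-Hessian hypotheses are indeed satisfied by the solution being estimated, which is a compatibility issue typical of the $L^2$--$L^\infty$ bootstrap. All other steps are standard elliptic/embedding results and involve no genuine difficulty beyond tracking constants.
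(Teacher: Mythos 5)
Your proposal is correct and is essentially the argument behind the cited Lemmas 11--12 of \cite{Kim}, which this paper quotes without proof: bound the macroscopic density $\rho_f=\int f\sqrt{\mu}\,\mathrm{d}v$ (resp.\ its spatial gradient) pointwise in $L^\infty$ (resp.\ in $L^p$ via H\"older in $v$ against $\alpha_{f,\varepsilon}^{-\beta q}\mu^{q/2}$, using Lemma \ref{alpha beta} and the constraint $\beta q<1$ coming from \eqref{p beta}, with \eqref{hessian phi} supplying the field hypotheses of that lemma), then apply Neumann elliptic regularity and Morrey embedding — the same mechanism the paper itself deploys in the proof of Proposition \ref{prop 3}. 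The only cosmetic simplification is to go from $\rho_f\in W^{1,p}(\Omega)\hookrightarrow C^{0,1-\frac{3}{p}}(\overline{\Omega})$ directly to $\phi_f\in C^{2,1-\frac{3}{p}}(\overline{\Omega})$ by Schauder estimates, which avoids invoking third-order Sobolev regularity for the Neumann problem.
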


\section{Some a priori Estimates}
\begin{proposition}\label{prop 3}
Let us choose $0<\vartheta \ll 1$ and
\begin{equation}\label{p beta}
\frac{p-2}{p}<\beta< \frac{2-\varpi}{3-\varpi},\, \mathrm{for} \,\,\, 3<p<6-2\varpi, \,\, \forall \,0<\varpi\ll 1.
\end{equation}
Assume that $f$ solves the problem \eqref{perturb eqn}--\eqref{incoming}, and
\begin{align}
& w_{\widetilde{\vartheta}}f, w_{\widetilde{\vartheta}} \alpha
_{f,\varepsilon}^\beta \nabla_{x,v}f \in L^p((0,T)\times \Omega\times \mathbb{R}^3)\cap L^p((0,T)\times \gamma_+),\label{3.-1}\\
&\nabla_{x,v}f\in L^p((0, T); L^\infty (\Omega \times \mathbb{R}^3)),\label{3.3}\\
& w_{\widetilde{\vartheta}}g \in L^p((0,T) \times \gamma_-), w_{4\vartheta}\nabla_{t,x,v}g \in L^p((0,T); L^\infty (\gamma_-)), \label{3.4}\\
& \sup_{0\leq t \leq T} \|w_{\frac{4\vartheta}{s_0}}f(t)\|_\infty \ll 1, \label{3.-4}\\
& \sup_{0\leq t \leq T} e^{\Lambda_1 t^\rho}\|\nabla_x \phi (t)\|_\infty < \delta_1, \label{3.-5}\\
&\sup_{0\leq t \leq T}  e^{\Lambda_2 t}\|\nabla_x^2 \phi (t)\|_\infty < \delta_2,\label{hessian phi}
\end{align}
with
\begin{equation*}
0<\frac{\delta_1}{(\Lambda_1)^\frac{1}{\rho}} \ll 1, \,\,0< \frac{\delta_2}{(\Lambda_2)^2}\ll 1,
\end{equation*}
where $0<\rho<1,  \widetilde{\vartheta}=\vartheta \left( 1+\frac{1}{(1+t)^\theta}\right)$, satisfying $\vartheta\theta>\delta_1$.  The above $s_0=\min \{s_1, s_2\}$ is from the estimate of $\mathbf{k}_2^\chi$ in \eqref{k2x}.

Then there exists a $C_p > 0$ such that
\begin{align}
&\|w_{\widetilde{\vartheta}}f(t)\|_p^p + \|w_{\widetilde{\vartheta}}\alpha_{f,\varepsilon}^\beta \partial f(t)\|_p^p \nonumber\\
&\leq  C_p e^{C_p (1+ \sup_{0\leq s \leq t} \|\nabla^2 \phi (s)\|_\infty)t} \{\|w_{\widetilde{\vartheta}}f(0)\|_p^p + \|w_{\widetilde{\vartheta}}\alpha_{f,\varepsilon}^\beta \partial f(0)\|_p^p + \|w_{\widetilde{\vartheta}}g \|^p_{L^p((0,T)\times \gamma_-)}+\int_0^T \|w_{4\vartheta}\partial g\|_\infty^p\},
\end{align}
where $\partial = \nabla_{x, v}$, $\|\cdot \|_p = \|\cdot \|_{L^p(\Omega \times \mathbb{R}^3)}$ and $\|\cdot \|_\infty = \|\cdot \|_{L^\infty(\Omega \times \mathbb{R}^3)}$.
\end{proposition}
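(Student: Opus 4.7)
The plan is to run two $L^p$ energy estimates in parallel---one for the undifferentiated weighted solution, one for the weighted first derivative---and combine them via a Gr\"onwall argument. For the zero-order piece, I would multiply the weighted equation from the introduction,
\[
\{\partial_t + v\cdot\nabla_x - \nabla_x\phi\cdot\nabla_v + \widetilde{\nu}\}(w_{\widetilde{\vartheta}}f) = w_{\widetilde{\vartheta}}\bigl(Kf + \Gamma(f,f) - v\cdot\nabla_x\phi\sqrt{\mu}\bigr),
\]
by $|w_{\widetilde{\vartheta}}f|^{p-2}w_{\widetilde{\vartheta}}f$ and integrate on $[0,t]\times\Omega\times\mathbb{R}^3$. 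Since $(v,-\nabla_x\phi)$ is divergence-free in $(x,v)$, the transport part contributes only boundary surface integrals, giving $\tfrac{1}{p}\|w_{\widetilde{\vartheta}}f\|_{p,+}^p-\tfrac{1}{p}\|w_{\widetilde{\vartheta}}g\|_{p,-}^p$ after imposing \eqref{incoming}. The positivity $\widetilde{\nu}\gtrsim\nu\gtrsim\langle v\rangle^{\gamma}$ from the introduction (which uses $\vartheta\theta>\delta_1$) makes the friction term nonnegative. The collision kernel is split as $K=K^{\chi}+K^{1-\chi}$ and handled via Lemma~\ref{kwh}, the nonlinearity $\Gamma$ via Lemma~\ref{gamma} together with the smallness \eqref{3.-4}, and the source $v\cdot\nabla_x\phi\sqrt{\mu}$ by H\"older combined with \eqref{3.-5}.

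For the first-order estimate, I apply $\partial\in\{\nabla_x,\nabla_v\}$ to \eqref{perturb eqn} and obtain a transport equation for $\partial f$. The commutators $[\partial,v\cdot\nabla_x]$ and $[\partial,-\nabla_x\phi\cdot\nabla_v]$ generate source terms of the form $\nabla_x f$ (when $\partial=\nabla_v$) and $(\nabla_x^2\phi)\,\nabla_v f$, which is precisely the origin of the factor $\sup_{0\le s\le t}\|\nabla^2\phi(s)\|_\infty$ appearing in the final bound. I then multiply the differentiated equation by $\alpha^{p\beta}_{f,\varepsilon}w_{\widetilde{\vartheta}}^{p}|\partial f|^{p-2}\partial f$. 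The invariance $(\partial_t+v\cdot\nabla_x-\nabla_x\phi\cdot\nabla_v)\alpha_{f,\varepsilon}\equiv 0$ is essential: it allows $\alpha^{p\beta}_{f,\varepsilon}$ to pass through the transport operator without producing new terms, preserving the $L^p$ energy identity. The incoming-boundary contribution is controlled via the identity $\alpha_{f,\varepsilon}=|n(x)\cdot v|$ on $\gamma_-$ and the Jacobian change of variables from Lemma~\ref{lemma 3}, and the outcome is dominated by $\|w_{\widetilde{\vartheta}}g\|_{L^p((0,t)\times\gamma_-)}^p+\int_0^t\|w_{4\vartheta}\partial g\|_\infty^p\,\mathrm{d}s$ via \eqref{3.4}. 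The operator $K$ and the regular piece $\Gamma^{\chi}$ are again treated with Lemmas~\ref{kwh} and~\ref{gamma} combined with Lemma~\ref{alpha beta} to control the negative powers of $\alpha_{f,\varepsilon}$ that arise when shifting the weight onto other velocity arguments.

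The principal obstacle is the singular collision contribution
\[
w_{\widetilde{\vartheta}}\alpha^{\beta}_{f,\varepsilon}\bigl[\Gamma^{1-\chi}_{\mathrm{gain}}(\partial f,f)+\Gamma^{1-\chi}_{\mathrm{gain}}(f,\partial f)+\Gamma^{1-\chi}_{\mathrm{loss}}(\partial f,f)\bigr],
\]
where the authors already explain in the introduction why a direct H\"older splitting fails: removing the $\alpha^{-\beta q}$ singularity forces $\gamma-\tfrac{5p}{q}<-1$ on the relative-velocity kernel (and conversely), so neither side of the split is integrable for $-3<\gamma<0$ and the stated range of $(p,\beta)$. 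My plan follows the introduction's geometric alternative: use
\[
\alpha^{\beta}_{f,\varepsilon}(t,x,v)\lesssim \alpha^{\beta}_{f,\varepsilon}(t,x,w)+O(\varepsilon), \qquad w\in\{u,u',v'\},
\]
valid off the thin strip $\{-\varepsilon<t-t_b<0\}$, together with the fact that this strip has $(x,v)$-measure $O(\varepsilon)$. On the good set I transfer $\alpha^{\beta}_{f,\varepsilon}$ from $v$ onto the velocity inside $\partial f$, so that the resulting quantity is absorbed into $\|w_{\widetilde{\vartheta}}\alpha^{\beta}_{f,\varepsilon}\partial f\|_p$; on the thin strip the hypothesis \eqref{3.3} turns the remainder into $O(\varepsilon)\int_0^t\|\partial f(s)\|_\infty^p\,\mathrm{d}s$, which is harmless once $\varepsilon$ is chosen small.

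Summing the zero-order and first-order estimates produces
\[
y(t)\le y(0)+C\|w_{\widetilde{\vartheta}}g\|_{L^p((0,t)\times\gamma_-)}^p+C\int_0^t\|w_{4\vartheta}\partial g(s)\|_\infty^p\,\mathrm{d}s+C\int_0^t\bigl(1+\|\nabla^2\phi(s)\|_\infty\bigr)y(s)\,\mathrm{d}s,
\]
with $y(t):=\|w_{\widetilde{\vartheta}}f(t)\|_p^p+\|w_{\widetilde{\vartheta}}\alpha^{\beta}_{f,\varepsilon}\partial f(t)\|_p^p$. Gr\"onwall's inequality applied to this integral inequality yields the stated exponential bound, completing the argument. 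The most delicate step is the $\Gamma^{1-\chi}$ estimate described above; once that is in place the remaining ingredients are standard manipulations built on the preliminary lemmas.
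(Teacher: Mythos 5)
Your proposal follows essentially the same route as the paper's proof: the same two-level $L^p$ energy scheme with the sign of $\widetilde{\nu}$, the same use of the invariance of $\alpha_{f,\varepsilon}$ and of Lemma~\ref{lemma 3} for the incoming boundary term, and the same resolution of the singular $\Gamma^{1-\chi}$ contribution via $\alpha^{\beta}_{f,\varepsilon}(v)\lesssim\alpha^{\beta}_{f,\varepsilon}(w)+O(\varepsilon)$ plus the $O(\varepsilon)$ measure of the strip $\{-\varepsilon<t-t_b<0\}$, absorbed through hypothesis \eqref{3.3}, followed by Gr\"onwall. The only detail you leave implicit is that $\nabla_v$ also falls on the collision kernel and on $\nu$, producing the extra terms $\Gamma_v(f,f)$, $K_vf$ and $\nabla_v\nu\cdot f$ in $\mathcal{G}$, but these are handled by the same kernel estimates and do not change the structure of the argument.
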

\begin{remark}

(1). With the following new method, we successfully overcome the difficulties caused by the soft potential collision operator. As an a priori estimate, Proposition $\ref{prop 3}$ is valid for $T=\infty$. However, \eqref{hessian phi} may not be valid globally in t. Actually, the infinite-time version of \eqref{hessian phi} is hopeless to be verified without Proposition $\ref{prop 3}$, but the validity of the latter also needs \eqref{hessian phi}. Fortunately, we notice that the finite-time version of Proposition $\ref{prop 3}$ is enough to obtain the local existence.

(2). When $T$ is finite, the conditions \eqref{3.3} and \eqref{3.4} can be simplified by
\begin{align*}
\nabla_{x,v}f \in L^\infty((0,T)\times \Omega \times \mathbb{R}^3),\,\,w_{4\vartheta}\nabla_{t,x,v}g \in L^\infty ((0,T)\times \gamma_-).
\end{align*}
As soon as $T$ is finite, $(1+s)^{\rho-1}$ has a positive lower bound $(1+T)^{\rho-1}$. Applying Gronwall's inequality, we can obtain a by-product, namely,
\begin{align*}
&\|w_{\widetilde{\vartheta}}f(t)\|_p^p + \|w_{\widetilde{\vartheta}}\alpha_{f,\varepsilon}^\beta \partial f(t)\|_p^p \\
&\leq  C_p e^{C_{p,T} (1+ \sup_{0\leq s \leq t} \|\nabla^2 \phi (s)\|_\infty)t^\rho} \Big\{\|w_{\widetilde{\vartheta}}f(0)\|_p^p + \|w_{\widetilde{\vartheta}}\alpha_{f,\varepsilon}^\beta \partial f(0)\|_p^p + \|w_{\widetilde{\vartheta}}g \|^p_{L^p((0,T)\times \gamma_-)}+\sup_{0 \leq s\leq T} \|w_{4\vartheta}\partial g\|_\infty^p\Big\}.
\end{align*}
\end{remark}

\begin{proof}[Proof of Proposition  \ref{prop 3}]
From \eqref{perturb eqn}, we can easily obtain the weighted equation:
\begin{align}\label{weighted eqn}
\{\partial_t + v \cdot \nabla_x - \nabla_x \phi \cdot \nabla_v + \widetilde{\nu}\}(w_{\widetilde{\vartheta}}f)= w_{\widetilde{\vartheta}}(Kf+ \Gamma(f, f)- v \cdot \nabla_x \phi \sqrt{\mu}),
\end{align}
where $\widetilde{\nu}=\nu_{\phi,\widetilde{\vartheta}}=\nu+\frac{v}{2}\cdot \nabla \phi+ \frac{\nabla \phi \cdot \nabla_v w_{\widetilde{\vartheta}}}{w_{\widetilde{\vartheta}}}-\frac{\partial_t w_{\widetilde{\vartheta}}}{w_{\widetilde{\vartheta}}}$.

Based on \eqref{weighted eqn}, we shall respectively estimate $\|w_{\widetilde{\vartheta}}f(t)\|_p^p$ and $ \|w_{\widetilde{\vartheta}}\alpha_{f,\varepsilon}^\beta \partial f(t)\|_p^p$ by the following two steps.

\emph{Step 1}.
Considering the fact that $p |f|^{p-2}f \partial_t f=\partial_t |f|^p$, we multiply $p|w_{\widetilde{\vartheta}}f|^{p-2}w_{\widetilde{\vartheta}}f$ to both sides of \eqref{weighted eqn}, take the integration over $(0,t)\times \Omega \times \mathbb{R}^3$ and integrate by parts, to obtain that
\begin{align}\label{weighted p eqn}
&\|w_{\widetilde{\vartheta}}f (t)\|_p^p + \int_0^t |w_{\widetilde{\vartheta}}f|_{p, +}^p +
\iint_{(0,t)\times \Omega \times \mathbb{R}^3} \widetilde{\nu}|w_{\widetilde{\vartheta}}f|^p \nonumber\\
\lesssim & \|w_{\widetilde{\vartheta}}f (0)\|_p^p+ \int_0^t |w_{\widetilde{\vartheta}}f|_{p, -}^p + \int_0^t \int_{\Omega \times \mathbb{R}^3} |w_{\widetilde{\vartheta}}f|^{p-1}\cdot w_{\widetilde{\vartheta}}(|Kf|+ |\Gamma (f,f)|+ |v \cdot \nabla_x \phi \sqrt{\mu}|).
\end{align}
Here we have used the fact: $\iint_{(0,t)\times \Omega \times \mathbb{R}^3} |w_{\widetilde{\vartheta}}f|^{p-2}w_{\widetilde{\vartheta}}f \cdot \widetilde{\nu} w_{\widetilde{\vartheta}}f=\iint_{(0,t)\times \Omega \times \mathbb{R}^3} \widetilde{\nu}|w_{\widetilde{\vartheta}}f|^p$.
\begin{remark}
There is a crucial advantage of taking the time-velocity weight $w_{\widetilde{\vartheta}}$. It is shown in Section 1 that $\widetilde{\nu}=\nu_{\phi,\widetilde{\vartheta}} \geq \frac{1}{2}\langle v\rangle^\gamma \geq 0$ as $\vartheta \theta > \delta_1$, and hence the term $\iint_{(0,t)\times \Omega \times \mathbb{R}^3} \widetilde{\nu}|w_{\widetilde{\vartheta}}f|^p$ equals to $\int_0^t \left\|\widetilde{\nu}^{\frac{1}{p}}w_{\widetilde{\vartheta}}f \right\|_p^p \geq 0$. Therefore, \eqref{weighted p eqn} can be simplified by
\begin{align}\label{weighted p 0 eqn}
&\|w_{\widetilde{\vartheta}}f (t)\|_p^p + \int_0^t |w_{\widetilde{\vartheta}}f|_{p, +}^p \nonumber\\
\lesssim & \|w_{\widetilde{\vartheta}}f (0)\|_p^p+ \int_0^t |w_{\widetilde{\vartheta}}f|_{p, -}^p + \int_0^t \int_{\Omega \times \mathbb{R}^3} |w_{\widetilde{\vartheta}}f|^{p-1}\cdot w_{\widetilde{\vartheta}}(|Kf|+ |\Gamma (f,f)|+ |v \cdot \nabla_x \phi \sqrt{\mu}|),
\end{align}
thereby avoiding controlling the tricky term $\int_0^t \left\|\widetilde{\nu}^{\frac{1}{p}}w_{\widetilde{\vartheta}}f \right\|_p^p$.

However, if we choose the weight $w_\vartheta(v)=e^{\vartheta|v|^2}$ instead of $w_{\widetilde{\vartheta}}$ as in \eqref{weighted eqn}, the corresponding $\nu_{\phi,\vartheta}=\nu + (1/2+2\vartheta )v \cdot \nabla \phi$ becomes negative as velocity $v$ is large. It means that the term $\int_0^t \int_{x,v} \nu_{\phi,\vartheta}|w_\vartheta f|^p $ may not have nonnegative lower bound and hard to control.
\end{remark}
According to the above discussion, to estimate $\|w_{\widetilde{\vartheta}}f (t)\|_p^p$, we only have to control the right-hand side of \eqref{weighted p 0 eqn} term by term.

(1). The contribution of $Kf$.
\begin{align*}
w_{\widetilde{\vartheta}}K_1 f &= \int_{\mathbb{R}^3 \times \mathbb{S}^2}\sqrt{\mu(v)}|u-v|^\gamma b_0(\theta)\sqrt{\mu(u)}\frac{w_{\widetilde{\vartheta}}(v)}{w_{\widetilde{\vartheta}}(u)}
(w_{\widetilde{\vartheta}}f(u))\mathrm{d}u \mathrm{d}\omega\\
: & = \int_{\mathbb{R}^3}h(v, u)(w_{\widetilde{\vartheta}}f(u))\mathrm{d}u.
\end{align*}
Note that $\int_{\mathbb{R}^3}h(v, u)\mathrm{d}u \lesssim \langle v\rangle^\gamma \lesssim 1$ and  $\int_{\mathbb{R}^3}h(v, u)\mathrm{d}v \lesssim \langle u\rangle^\gamma \lesssim 1$, with H\"{o}lder's inequality, we have
\begin{align*}
\|w_{\widetilde{\vartheta}}K_1 f\|_{L^p_v}\leq & \| \|h^{1/q}\|_{L^p_u} \|h^{1/p}w_{\widetilde{\vartheta}}f(u)\|_{L^p_u}\|_{L^p_v}
\lesssim  \|w_{\widetilde{\vartheta}} f(u)\|_{L^p_u}.
\end{align*}
It yields
\begin{align*}
\int_{\mathbb{R}^3}|w_{\widetilde{\vartheta}} f|^{p-1}|w_{\widetilde{\vartheta}}K_1 f|
\leq \|w_{\widetilde{\vartheta}} f\|_p^{p-1}\|w_{\widetilde{\vartheta}} f\|_p = \|w_{\widetilde{\vartheta}} f\|_p^p.
\end{align*}
Hence the contribution of $K_1 f$ is $\int_0^t \|w_{\widetilde{\vartheta}}f\|_p^p$.

Because of (\ref{k2x}), we know that
\begin{align*}
w_{\widetilde{\vartheta}}K_2^{\chi} f= & \int_{\mathbb{R}^3}\mathbf{k}_2^\chi(v, u)\frac{w_{\widetilde{\vartheta}}(v)}{w_{\widetilde{\vartheta}}(u)}
(w_{\widetilde{\vartheta}}f(u))\mathrm{d}u \\
\leq & \int_{\mathbb{R}^3} \frac{\exp\left( -\frac{s_0}{8}|u-v|^2- \frac{s_0}{8}\frac{(|v|^2-|u|^2)^2}{|v-u|^2}\right)}{|v-u|(1+|v|+|u|)^{1-\gamma}}
\frac{w_{\widetilde{\vartheta}}(v)}{w_{\widetilde{\vartheta}}(u)}
(w_{\widetilde{\vartheta}}f(u))\mathrm{d}u \\
:= & \int_{\mathbb{R}^3}h(v, u)(w_{\widetilde{\vartheta}}f(u))\mathrm{d}u,
\end{align*}
where $s_0 = \min\{s_1, s_2\}$. Let $u= v- \eta$, we calculate the exponent:
\begin{align*}
&-\frac{s_0}{8}|u-v|^2- \frac{s_0}{8}\frac{(|v|^2-|u|^2)^2}{|v-u|^2}-(\pm \widetilde{\vartheta})(|u|^2-|v|^2)\\
= & -(\frac{s_0}{4} \pm \widetilde{\vartheta})|\eta|^2 + (\frac{s_0}{2}\pm 2 \widetilde{\vartheta})v\cdot \eta - \frac{s_0}{2}\frac{|v\cdot \eta|^2}{|\eta|^2}.
\end{align*}
Note that $\widetilde{\vartheta} \leq 2 \vartheta \ll 1$, it holds $\widetilde{\vartheta}< \frac{s_0}{4}$, so $\Delta \equiv - \frac{s_0^2}{4}+ 4 \widetilde{\vartheta}^2 <0$. Therefore, through a similar procedure as the one of Lemma 3 in \cite{Guo2010}, we obtain that
\begin{align*}
&\int_{\mathbb{R}^3}h(v, u)\mathrm{d}u \lesssim \langle v\rangle^{\gamma-2} \lesssim 1,\\
&\int_{\mathbb{R}^3}h(v, u)\mathrm{d}v \lesssim \langle u\rangle^{\gamma-2} \lesssim 1.
\end{align*}
Hence the contribution of $K_2^\chi f$ is $\int_0^t \|w_{\widetilde{\vartheta}}f\|_p^p$.

For the term $w_{\widetilde{\vartheta}}K_2^{1-\chi}f$, for any $g\in L^q_v$, $\frac{1}{p}+ \frac{1}{q}=1$, we have
\begin{align*}
\langle w_{\widetilde{\vartheta}}K_2^{1-\chi}f, g\rangle_{L^p, L^q}=&\int_{\mathbb{R}^3 \times \mathbb{R}^3 \times \mathbb{S}^2} |u-v|^\gamma b_0(\theta)(1-\chi(|u-v|))\sqrt{\mu(u)}\sqrt{\mu(u')}w_{\widetilde{\vartheta}}(v)f(v')g(v)
\mathrm{d}\omega\mathrm{d}u\mathrm{d}v\\
& + \int_{\mathbb{R}^3 \times \mathbb{R}^3 \times \mathbb{S}^2} |u-v|^\gamma b_0(\theta)(1-\chi(|u-v|))\sqrt{\mu(u)}\sqrt{\mu(v')}w_{\widetilde{\vartheta}}(v)f(u')g(v)
\mathrm{d}\omega\mathrm{d}u\mathrm{d}v\\
:=& I_1+I_2.
\end{align*}
Note that $v' = v - ((v-u)\cdot \omega)\omega$, so we have the crucial observation:
\begin{align*}
\big||v|-|v'|\big|\leq |v-u| &\leq 2 \varepsilon,\\
\big||u|-|u'|\big|\leq |v-u| &\leq 2 \varepsilon,\\
\big||u'|-|v|\big|\leq \big||u'|-|u|\big|+\big||u|-|v|\big| & \leq 4 \varepsilon.
\end{align*}
Hence for sufficiently small $\varepsilon$, it holds $w_{\widetilde{\vartheta}}(v)\lesssim w_{\widetilde{\vartheta}}(v')$ and $w_{\widetilde{\vartheta}}(v)\lesssim w_{\widetilde{\vartheta}}(u')$. Then we have
\begin{align*}
I_1 \lesssim &\int_{\mathbb{R}^3 \times \mathbb{R}^3 \times \mathbb{S}^2} |u'-v'|^\gamma b_0(\theta)(1-\chi(|u'-v'|))w_{\widetilde{\vartheta}}(v')f(v')g(v)
\mathrm{d}\omega\mathrm{d}u\mathrm{d}v\\
\lesssim & \left(\int |u'-v'|^\gamma (1-\chi)|w_{\widetilde{\vartheta}}f(v')|^p \mathrm{d}u'\mathrm{d}v'\right)^{\frac{1}{p}} \left(\int |u-v|^\gamma (1-\chi)|g(v)|^q \mathrm{d}u \mathrm{d} v\right)^{\frac{1}{q}}\\
\lesssim & \left(\int |u-v|^\gamma (1-\chi)\mathrm{d}u \int|w_{\widetilde{\vartheta}}f(v)|^p \mathrm{d}v\right)^{\frac{1}{p}} \left(\int |u-v|^\gamma (1-\chi)\mathrm{d}u \int |g(v)|^q  \mathrm{d} v\right)^{\frac{1}{q}}\\
\lesssim & \left(\int |u-v|^\gamma (1-\chi)\mathrm{d}u \right)\|w_{\widetilde{\vartheta}}f\|_p \|g\|_q\\
\lesssim & \varepsilon^{\gamma +3}\|w_{\widetilde{\vartheta}}f\|_p \|g\|_q.
\end{align*}
The estimate of $I_2$ can be treated in a same way. As a result, $\|w_{\widetilde{\vartheta}}K_2^{1-\chi}f\|_p\lesssim \|w_{\widetilde{\vartheta}}f\|_p$.

In summary, the contribution of $Kf$ is $\int_0^t \|w_{\widetilde{\vartheta}}f\|_p^p$.

(2). The contribution of $\Gamma (f, f)$.
On one hand, we have
\begin{align*}
&\Gamma_{\mathrm{gain}}^\chi (f, g)+ \Gamma_{\mathrm{gain}}^\chi (g, f)\\
=& \int_{\mathbb{R}^3 \times \mathbb{S}^2} |u-v|^\gamma \chi(|u-v|)b_0(\theta)\sqrt{\mu(u)}(f(u')g(v')+ f(v')g(u'))\mathrm{d}u \mathrm{d}\omega\\
\lesssim & \|w_{4\vartheta / s_0}f\|_\infty \int_{\mathbb{R}^3 \times \mathbb{S}^2} |u-v|^\gamma \chi(|u-v|)b_0(\theta)w_{4\vartheta / s_0}^{-1}(u)( w_{4\vartheta / s_0}^{-1}(u')g(v')+ w_{4\vartheta / s_0}^{-1}(v')g(u'))\mathrm{d}u \mathrm{d}\omega\\
:= & \|w_{4\vartheta / s_0}f\|_\infty \int_{\mathbb{R}^3} \mathbf{k}_{2, 4\vartheta /s_0}^\chi (v, u)g(u)\mathrm{d}u.
\end{align*}
Through a procedure of calculating the standard $\mathbf{k}_2^\chi$ for $\sqrt{\mu(v)}= e^{-\frac{1}{4}|v|^2}$ (recall that we regard the kernel corresponding to $e^{-\frac{1}{4}|v|^2}$ as ``standard''), for the case $w_{4\vartheta / s_0}^{-1}(v)= e^{- \frac{4\vartheta}{s_0}|v|^2}$, we have
\begin{align*}
\mathbf{k}_{2, 4\vartheta /s_0}^\chi (v, u) \lesssim \frac{\exp\left( -2\vartheta |u-v|^2- 2\vartheta \frac{(|v|^2-|u|^2)^2}{|v-u|^2}\right)}{|v-u|(1+|v|+|u|)^{1-\gamma}}.
\end{align*}
Moreover,
\begin{align*}
&w_{\widetilde{\vartheta}}\left(\Gamma_{\mathrm{gain}}^\chi (f, g)+ \Gamma_{\mathrm{gain}}^\chi (g, f)\right)\\
\lesssim & \|w_{4\vartheta / s_0}f\|_\infty \int_{\mathbb{R}^3} \mathbf{k}_{2, 4\vartheta /s_0}^\chi (v, u)\frac{w_{\widetilde{\vartheta}}(v)}{w_{\widetilde{\vartheta}}(u)}
|w_{\widetilde{\vartheta}}g(u)|\mathrm{d}u.
\end{align*}
Calculate the exponent with $u=v-\eta$:
\begin{align*}
&-2\vartheta |u-v|^2- 2\vartheta \frac{(|v|^2-|u|^2)^2}{|v-u|^2}-(\pm \widetilde{\vartheta})(|u|^2- |v|^2)\\
=& -(4\vartheta \pm \widetilde{\vartheta})|\eta|^2 + (8 \vartheta \pm 2 \widetilde{\vartheta})v\cdot \eta - 8\vartheta \frac{|v \cdot \eta|^2}{|\eta|^2}.
\end{align*}
Note that $\widetilde{\vartheta}\leq 2 \vartheta < 4 \vartheta$, it also holds $\Delta \equiv -64 \vartheta^2 + 4 \widetilde{\vartheta}^2 <0$. Thus, we obtain that
\begin{align*}
&\int_{\mathbb{R}^3} \mathbf{k}_{2, 4\vartheta /s_0}^\chi (v, u)\frac{w_{\widetilde{\vartheta}}(v)}{w_{\widetilde{\vartheta}}(u)}
\mathrm{d}u \lesssim \langle v\rangle^{\gamma-2}\lesssim 1,\\
&\int_{\mathbb{R}^3} \mathbf{k}_{2, 4\vartheta /s_0}^\chi (v, u)\frac{w_{\widetilde{\vartheta}}(v)}{w_{\widetilde{\vartheta}}(u)}
\mathrm{d}v \lesssim \langle u\rangle^{\gamma-2}\lesssim 1.
\end{align*}
Therefore, the contribution of $\Gamma_{\mathrm{gain}}^\chi(f, f)$ is $\sup_{0 \leq s \leq t}\|w_{4\vartheta / s_0}f(s)\|_\infty \int_0^t \|w_{\widetilde{\vartheta}}f\|_p^p$.

On the other hand, note that
\begin{align*}
& w_{\widetilde{\vartheta}}\left(\Gamma_{\mathrm{gain}}^{1-\chi} (f, g)+ \Gamma_{\mathrm{gain}}^{1-\chi} (g, f)\right)\\
\lesssim & \|w_{\widetilde{\vartheta}}f\|_\infty \int_{\mathbb{R}^3\times \mathbb{S}^2} |u-v|^\gamma (1-\chi)b_0(\theta)\sqrt{\mu(u)}w_{\widetilde{\vartheta}}(v)
(w_{\widetilde{\vartheta}}^{-1}(u')g(v')+ w_{\widetilde{\vartheta}}^{-1}(v')g(u')) \mathrm{d}u\mathrm{d}\omega.
\end{align*}
Following the same path as $w_{\widetilde{\vartheta}}K_2^{1- \chi}f$, we have
\begin{align*}
\|w_{\widetilde{\vartheta}}\Gamma_{\mathrm{gain}}^{1-\chi}(f, f)\|_p \lesssim \varepsilon^{\gamma +3}\|w_{\widetilde{\vartheta}}f\|_\infty \|w_{\widetilde{\vartheta}}f\|_p.
\end{align*}

As for the loss term $\Gamma_{\mathrm{loss}}(f, f)$, it is clear that
\begin{align*}
w_{\widetilde{\vartheta}}\Gamma_{\mathrm{loss}}(f, f)\lesssim & |w_{\widetilde{\vartheta}}f(v)|\int_{\mathbb{R}^3}|u-v|^\gamma \sqrt{\mu(u)}f(u)\mathrm{d}u\\
\lesssim & \|w_{\widetilde{\vartheta}}f\|_\infty |w_{\widetilde{\vartheta}}f(v)|.
\end{align*}
In summary, the contribution of $\Gamma (f, f)$ is $\sup_{0 \leq s \leq t}\|w_{4\vartheta / s_0}f(s)\|_\infty \int_0^t \|w_{\widetilde{\vartheta}}f\|_p^p$.

(3). The contribution of $v\cdot \nabla \phi \sqrt{\mu}$. It is easy to know that
\begin{align*}
\|v\cdot \nabla \phi \sqrt{\mu} w_{\widetilde{\vartheta}}\|_{L^p_{x, v}}
\lesssim  \|\nabla \phi\|_{L^p_x} \lesssim \|\phi\|_{W^{2, p}(\Omega)}\lesssim \left\|\int_{\mathbb{R}^3} f \sqrt{\mu}\mathrm{d}v\right\|_{L^p_x} \lesssim \|w_{\widetilde{\vartheta}}f\|_p.
\end{align*}
Hence the contribution of $v\cdot \nabla \phi \sqrt{\mu}$ is $\int_0^t \|w_{\widetilde{\vartheta}}f\|_p^p$.

In conclusion, we have
\begin{align*}
\|w_{\widetilde{\vartheta}}f(t)\|_p^p \lesssim \|w_{\widetilde{\vartheta}}f(0)\|_p^p+ \|w_{\widetilde{\vartheta}} g\|_{L^p((0, T)\times \gamma_-)}^p + \left(1+\sup_{0 \leq s \leq t}\|w_{4\vartheta / s_0}f(s)\|_\infty \right) \int_0^t \|w_{\widetilde{\vartheta}}f\|_p^p.
\end{align*}

\emph{Step 2}.
Considering \eqref{K} and \eqref{Gamma}, and changing the variables $u-v\rightarrow u$, we have
\begin{align*}
\Gamma(f_1,f_2)=&\int_{\mathbb{R}^3\times \mathbb{S}^2}|u-v|^\gamma b_0(\theta)\sqrt{\mu(u)}[f_1(u')f_2(v')-f_1(u)f_2(v)]\mathrm{d}\omega\mathrm{d}u\\
=& \int_{\mathbb{R}^3\times \mathbb{S}^2}|u|^\gamma b_0(\theta)\sqrt{\mu(v+u)}[f_1(v+u_\perp)f_2(v+u_\parallel)-f_1(v+u)f_2(v)]\mathrm{d}\omega\mathrm{d}u,
\end{align*}
where $u_\parallel=(u\cdot \omega)\omega$ and $u_\perp=u-u_\parallel$. By direct computations,
\begin{align*}
\nabla_v \Gamma(f_1,f_2)=\Gamma(\nabla_v f_1,f_2)+\Gamma(f_1,\nabla_v f_2)+\Gamma_v(f_1,f_2),
\end{align*}
where we define
\begin{align*}
\Gamma_v(f_1,f_2)(v):=&\int_{\mathbb{R}^3\times \mathbb{S}^2}|u|^\gamma b_0(\theta)\nabla_v \sqrt{\mu(v+u)}[f_1(v+u_\perp)f_2(v+u_\parallel)-f_1(v+u)f_2(v)]\mathrm{d}\omega\mathrm{d}u\\
=&\int_{\mathbb{R}^3\times \mathbb{S}^2}|u-v|^\gamma b_0(\theta)\nabla_u\sqrt{\mu(u)}[f_1(u')f_2(v')-f_1(u)f_2(v)]\mathrm{d}\omega\mathrm{d}u\\
:=& \Gamma_{v, \mathrm{gain}}(f_1,f_2)(v)-\Gamma_{v, \mathrm{loss}}(f_1,f_2)(v).
\end{align*}
We also denote that
\begin{align*}
\Gamma_{v, \mathrm{gain}}^\chi (f_1,f_2)(v) :=& \int_{\mathbb{R}^3\times \mathbb{S}^2}|u-v|^\gamma \chi(|u-v|) b_0(\theta)\nabla_u\sqrt{\mu(u)}f_1(u')f_2(v')\mathrm{d}\omega\mathrm{d}u, \\
\Gamma_{v, \mathrm{loss}}^\chi (f_1,f_2)(v) :=& \int_{\mathbb{R}^3\times \mathbb{S}^2}|u-v|^\gamma \chi(|u-v|) b_0(\theta)\nabla_u\sqrt{\mu(u)}f_1(u)f_2(v)\mathrm{d}\omega\mathrm{d}u,\\
\nabla_x \Gamma(f_1,f_2)=&\Gamma(\nabla_x f_1,f_2)+\Gamma(f_1,\nabla_x f_2),
\end{align*}
and $\Gamma_{v, \mathrm{loss}}^{1-\chi}:=\Gamma_{v, \mathrm{loss}}-\Gamma_{v, \mathrm{loss}}^{\chi}$, $\Gamma_{v, \mathrm{gain}}^{1-\chi}:=\Gamma_{v, \mathrm{gain}}-\Gamma_{v, \mathrm{gain}}^{\chi}$.
Noticing that the definition of $K_i^\chi f$, in \eqref{K chi}, we have
\begin{align*}
K_1 f=&\sqrt{\mu(v)}\int_{\mathbb{R}^3 \times \mathbb{S}^2}|u-v|^\gamma b_0(\theta)\sqrt{\mu(u)}f(u)\mathrm{d}u \mathrm{d}\omega:=K_1^\chi f+K_1^{1-\chi} f,\\
K_2 f=&\int_{\mathbb{R}^3 \times \mathbb{S}^2}|u-v|^\gamma b_0(\theta)\sqrt{\mu(u)}\{f(u')\sqrt{\mu(v')}+f(v')\sqrt{\mu(u')}\}\mathrm{d}u \mathrm{d}\omega\\
:=& \int_{\mathbb{R}^3}\mathbf{k}_2^\chi (v,u)f(u)\mathrm{d}u+K_2^{1-\chi}f.
\end{align*}
By the same reason as above, we define
\begin{align*}
\nabla_v K_1 f(v):=& K_{v,1}^\chi f+K_{v,1}^{1-\chi} f+K_1^\chi \nabla_v f+K_1^{1-\chi} \nabla_v f,\\
\nabla_v K_2 f(v):=& \int_{\mathbb{R}^3}\nabla_v\mathbf{k}_2^\chi (v,u)f(u)\mathrm{d}u+K_{v,2}^{1-\chi} f+\int_{\mathbb{R}^3}\mathbf{k}_2^\chi (v,u)\nabla_v f(u)\mathrm{d}u+K_2^{1-\chi} \nabla_v f,\\
\nabla_x K_1 f(v):=& K_1^\chi \nabla_x f+K_1^{1-\chi} \nabla_x f,\\
\nabla_x K_2 f(v):=& \int_{\mathbb{R}^3}\mathbf{k}_2^\chi (v,u)\nabla_x f(u)\mathrm{d}u+K_2^{1-\chi} \nabla_x f.
\end{align*}

Based on \eqref{weighted eqn}, analogous to the derivation of \eqref{weighted p 0 eqn}, for $|w_{\widetilde{\vartheta}}\alpha^\beta_{f, \varepsilon}\partial f|^p$, $\partial=\nabla_{x,v}$, we have
\begin{align*}
&\|w_{\widetilde{\vartheta}}\alpha^\beta_{f, \varepsilon}\partial f(t)\|_p^p + \int_0^t \|\widetilde{\nu}^{1/p}w_{\widetilde{\vartheta}}\alpha^\beta_{f, \varepsilon}\partial f\|_p^p+ \int_0^t |w_{\widetilde{\vartheta}}\alpha^\beta_{f, \varepsilon}\partial f|_{p, +}^p \\
\lesssim &\|w_{\widetilde{\vartheta}}\alpha^\beta_{f, \varepsilon}\partial f(0)\|_p^p
+ \int_0^t |w_{\widetilde{\vartheta}}\alpha^\beta_{f, \varepsilon}\partial f|_{p, -}^p
+ \int_0^t \int_{\Omega \times \mathbb{R}^3}w_{\widetilde{\vartheta}}^p \alpha^{\beta p}_{f, \varepsilon}|\partial f|^{p-1} |\mathcal{G}|,
\end{align*}
where
\begin{align}\label{G}
|\mathcal{G}| \lesssim & |\nabla_x f|+ |\nabla^2 \phi||\nabla_v f|+ |\Gamma (\partial f, f)|+ |\Gamma (f, \partial f)|+ |K \partial f|+ |\Gamma_v (f, f)|\nonumber\\
& + |K_v f|+ |\nabla_v \nu||f|+ |\partial(v \cdot \nabla_x \phi)f|+ |\partial(v \cdot \nabla_x \phi \sqrt{\mu})|.
\end{align}
Once again, due to $\widetilde{\nu}\geq 0$ and $\int_0^t \|\widetilde{\nu}^{1/p}w_{\widetilde{\vartheta}}\alpha^\beta_{f, \varepsilon}\partial f\|_p^p \geq 0$, it immediately holds
\begin{align}\label{weighted alpha}
&\|w_{\widetilde{\vartheta}}\alpha^\beta_{f, \varepsilon}\partial f(t)\|_p^p +  \int_0^t |w_{\widetilde{\vartheta}}\alpha^\beta_{f, \varepsilon}\partial f|_{p, +}^p \nonumber\\
\lesssim &\|w_{\widetilde{\vartheta}}\alpha^\beta_{f, \varepsilon}\partial f(0)\|_p^p
+ \int_0^t |w_{\widetilde{\vartheta}}\alpha^\beta_{f, \varepsilon}\partial f|_{p, -}^p
+ \int_0^t \int_{\Omega \times \mathbb{R}^3}w_{\widetilde{\vartheta}}^p \alpha^{\beta p}_{f, \varepsilon}|\partial f|^{p-1} |\mathcal{G}|.
\end{align}
To estimate $\|w_{\widetilde{\vartheta}}\alpha^\beta_{f, \varepsilon}\partial f(t)\|_p^p$, we deal with the right-hand side of \eqref{weighted alpha} term by term, with \eqref{G}.

(1). It is clear that the contribution of $|\nabla_x f|+|\nabla^2 \phi||\nabla_v f|$ is
$(1+ \sup_{0 \leq s\leq t}\|\nabla^2 \phi\|_\infty)\int_0^t \|w_{\widetilde{\vartheta}}\alpha^\beta_{f, \varepsilon}\partial f\|_p^p$.

(2). The contribution of $|\nabla_v \nu(v)||f|$.

Note that $\nu (v)\sim \langle v\rangle^\gamma$, it holds that $|\nabla_v \nu(v)|\cdot \alpha^\beta_{f, \varepsilon}\lesssim \langle v\rangle^{\gamma-1}\cdot \langle v\rangle^\beta \lesssim 1.$ By using Young's inequality, we have
\begin{align*}
&\int_{\Omega \times \mathbb{R}^3}w_{\widetilde{\vartheta}}^{p-1} \alpha^{\beta (p-1)}_{f, \varepsilon}|\partial f|^{p-1} |\nabla_v \nu(v)|\cdot \alpha^\beta_{f, \varepsilon}w_{\widetilde{\vartheta}}|f|
\lesssim \int_{\Omega \times \mathbb{R}^3} |w_{\widetilde{\vartheta}}\alpha^\beta_{f, \varepsilon}\partial f|^p + |w_{\widetilde{\vartheta}} f|^p
\end{align*}
So the contribution of $|\nabla_v \nu(v)||f|$ is $\int_0^t \|w_{\widetilde{\vartheta}}\alpha^\beta_{f, \varepsilon}\partial f\|_p^p + \int_0^t \|w_{\widetilde{\vartheta}} f\|_p^p$.

(3). $|\partial(v \cdot \nabla_x \phi)f|+ |\partial(v \cdot \nabla_x \phi \sqrt{\mu})|$.
On one hand, it holds that
\begin{align*}
\alpha^\beta_{f, \varepsilon} w_{\widetilde{\vartheta}} |\partial(v \cdot \nabla_x \phi)f|
\lesssim & \frac{\langle v\rangle^\beta}{w_{\widetilde{\vartheta}/4}(v)} w_{\widetilde{\vartheta}/4}^{-1}(v) (|\nabla \phi|+ |\nabla^2 \phi|)\|w_{2\widetilde{\vartheta}}f\|_\infty\\
\lesssim & \|w_{2\widetilde{\vartheta}}f\|_\infty w_{\widetilde{\vartheta}/4}^{-1}(v) (|\nabla \phi|+ |\nabla^2 \phi|).
\end{align*}
With Young's inequality, it holds
\begin{align*}
&|w_{\widetilde{\vartheta}}\alpha^\beta_{f, \varepsilon}\partial f|^{p-1} \times \alpha^\beta_{f, \varepsilon} w_{\widetilde{\vartheta}}|\partial(v \cdot \nabla_x \phi)f|\\
\lesssim & \|w_{2 \widetilde{\vartheta}}f\|_\infty \times (|w_{\widetilde{\vartheta}}\alpha^\beta_{f, \varepsilon}\partial f|^p + w_{\widetilde{\vartheta}/4}^{-p}(v) (|\nabla \phi|^p+ |\nabla^2 \phi|^p)).
\end{align*}
Note that
\begin{align*}
\int w_{\widetilde{\vartheta}/4}^{-p}(v)\mathrm{d}v (|\nabla \phi|^p+ |\nabla^2 \phi|^p) \mathrm{d}x \lesssim \|\phi\|^p_{W^{2, p}(\Omega)}\lesssim \|w_{\widetilde{\vartheta}}f\|_p^p.
\end{align*}
On the other hand, $\partial (v \sqrt{\mu} \cdot \nabla \phi)\lesssim \mu^{\frac{1}{4}}(v)(|\nabla \phi|+|\nabla^2 \phi|)$. So the detail of estimating $\partial (v \sqrt{\mu} \cdot \nabla \phi)$ is similar to $|\partial(v \cdot \nabla_x \phi)f|$.

The contribution of $|\partial(v \cdot \nabla_x \phi)f|+ |\partial(v \cdot \nabla_x \phi \sqrt{\mu})|$ is
\begin{align*}
\left(1+ \sup_{0 \leq s \leq t}\|w_{2 \widetilde{\vartheta}}f\|_\infty \right)\left(\int_0^t \|w_{\widetilde{\vartheta}}\alpha^\beta_{f, \varepsilon}\partial f\|_p^p + \int_0^t \|w_{\widetilde{\vartheta}} f\|_p^p\right).
\end{align*}

(4). $|\Gamma_v(f, f)|+|K_v f|$.
First, it holds
\begin{align*}
\alpha^\beta_{f, \varepsilon}w_{\widetilde{\vartheta}}\Gamma_{v, \mathrm{loss}}(f, f)
\lesssim & \langle v\rangle^\beta w_{\widetilde{\vartheta}}\int_{\mathbb{R}^3\times \mathbb{S}^2}|u-v|^\gamma b_0(\theta)\nabla_v \sqrt{\mu(u)}f(u)f(v)\mathrm{d}u \mathrm{d}\omega\\
\lesssim & \|w_{2 \widetilde{\vartheta}}f\|_\infty \int_{\mathbb{R}^3\times \mathbb{S}^2}|u-v|^\gamma b_0(\theta)\nabla_v \sqrt{\mu(u)}\langle v\rangle^\beta w_{\widetilde{\vartheta}}^{-1}(v)w_{\widetilde{\vartheta}}^{-1}(u)
(w_{\widetilde{\vartheta}}f(u))
\mathrm{d}u \mathrm{d}\omega\\
:= & \|w_{2 \widetilde{\vartheta}}f\|_\infty \int_{\mathbb{R}^3}h(v, u)
(w_{\widetilde{\vartheta}}f(u))
\mathrm{d}u.
\end{align*}
Note that $\int_{\mathbb{R}^3}h(v, u)\mathrm{d}u \lesssim 1$ and $\int_{\mathbb{R}^3}h(v, u)\mathrm{d}v \lesssim 1$. By using Young's inequality, the contribution of $\Gamma_{v, \mathrm{loss}}(f, f)$ is
\begin{align*}
\sup_{0 \leq s \leq t}\|w_{2 \widetilde{\vartheta}}f\|_\infty \left(\int_0^t \|w_{\widetilde{\vartheta}}\alpha^\beta_{f, \varepsilon}\partial f\|_p^p + \int_0^t \|w_{\widetilde{\vartheta}} f\|_p^p\right).
\end{align*}
For the gain term, we have
\begin{align*}
&\alpha^\beta_{f, \varepsilon}w_{\widetilde{\vartheta}}\Gamma^\chi_{v, \mathrm{gain}}(f, f)\\
\lesssim & \|w_{4 \vartheta /s_0}f\|_\infty \int_{\mathbb{R}^3}\mathbf{k}_{2, 4\vartheta /s_0}^\chi (v, u)\frac{w_{\widetilde{\vartheta}}(v)}{w_{\widetilde{\vartheta}}(u)}\langle v\rangle^\beta
|w_{\widetilde{\vartheta}}f(u)|\mathrm{d}u\\
\lesssim & \|w_{4 \vartheta /s_0}f\|_\infty \int_{\mathbb{R}^3} \frac{\exp\left( -2\vartheta |u-v|^2- 2\vartheta \frac{(|v|^2-|u|^2)^2}{|v-u|^2}\right)}{|v-u|}
\frac{w_{\widetilde{\vartheta}}(v)}{w_{\widetilde{\vartheta}}(u)}
|w_{\widetilde{\vartheta}}f(u)|\mathrm{d}u.
\end{align*}
Due to
\begin{align*}
&\int_{\mathbb{R}^3} \frac{\exp\left( -2\vartheta |u-v|^2- 2\vartheta \frac{(|v|^2-|u|^2)^2}{|v-u|^2}\right)}{|v-u|}
\frac{w_{\widetilde{\vartheta}}(v)}{w_{\widetilde{\vartheta}}(u)}
\mathrm{d}u \lesssim \langle v\rangle^{-1}\lesssim 1,\\
&\int_{\mathbb{R}^3} \frac{\exp\left( -2\vartheta |u-v|^2- 2\vartheta \frac{(|v|^2-|u|^2)^2}{|v-u|^2}\right)}{|v-u|}
\frac{w_{\widetilde{\vartheta}}(v)}{w_{\widetilde{\vartheta}}(u)}
\mathrm{d}v \lesssim \langle u\rangle^{-1}\lesssim 1,
\end{align*}
we know that the contribution of $\Gamma^\chi_{v, \mathrm{gain}}(f, f)$ is
\begin{align*}
\sup_{0 \leq s \leq t}\|w_{4 \vartheta /s_0}f\|_\infty \left(\int_0^t \|w_{\widetilde{\vartheta}}\alpha^\beta_{f, \varepsilon}\partial f\|_p^p + \int_0^t \|w_{\widetilde{\vartheta}} f\|_p^p\right).
\end{align*}
Meanwhile,
\begin{align*}
&\alpha^\beta_{f, \varepsilon}w_{\widetilde{\vartheta}}\Gamma^{1-\chi}_{v, \mathrm{gain}}(f, f)\\
\lesssim & \|w_{\widetilde{\vartheta}}f\|_\infty \int_{\mathbb{R}^3 \times \mathbb{S}^2} |u-v|^\gamma b_0(\theta)(1-\chi)\nabla_v \sqrt{\mu(u)}w_{\widetilde{\vartheta}}(v)\langle v\rangle^\beta (w_{\widetilde{\vartheta}}^{-1}(v')f(u')+w_{\widetilde{\vartheta}}^{-1}(u')f(v') )
\mathrm{d}u \mathrm{d}\omega.
\end{align*}
Through a similar way to estimate $K_2^{1-\chi} f$ in \emph{Step 1}, we obtain
\begin{align*}
\|\alpha^\beta_{f, \varepsilon}w_{\widetilde{\vartheta}}\Gamma^{1-\chi}_{v, \mathrm{gain}}(f, f)\|_p \lesssim \|w_{\widetilde{\vartheta}}f\|_\infty \|w_{\widetilde{\vartheta}}f\|_p.
\end{align*}
Therefore, in summary, the contribution of $\Gamma_v (f, f)$ is
\begin{align*}
\sup_{0 \leq s \leq t}\|w_{4 \vartheta /s_0}f\|_\infty \left(\int_0^t \|w_{\widetilde{\vartheta}}\alpha^\beta_{f, \varepsilon}\partial f\|_p^p + \int_0^t \|w_{\widetilde{\vartheta}} f\|_p^p\right).
\end{align*}

Next, concerned about the contribution of $K_v f=K_{v,1}f-K_{v,2}f$, it holds for $K_{v,1}f$ that
\begin{align*}
\alpha^\beta_{f, \varepsilon}w_{\widetilde{\vartheta}}K_{v, 1}f \lesssim & \int_{\mathbb{R}^3}
\nabla_v \mathbf{k}_1(v, u)\frac{w_{\widetilde{\vartheta}}(v)}{w_{\widetilde{\vartheta}}(u)}\langle v\rangle^\beta
(w_{\widetilde{\vartheta}}f(u))\mathrm{d}u\\
\lesssim & \int_{\mathbb{R}^3}
|u-v|^\gamma \mu^{1/4}(u)\mu^{1/4}(v)\frac{w_{\widetilde{\vartheta}}(v)}{w_{\widetilde{\vartheta}}(u)}\langle v\rangle^\beta
(w_{\widetilde{\vartheta}}f(u))\mathrm{d}u.
\end{align*}
Noting that
\begin{align*}
&\int_{\mathbb{R}^3}
|u-v|^\gamma \mu^{1/4}(u)\mu^{1/4}(v)\frac{w_{\widetilde{\vartheta}}(v)}{w_{\widetilde{\vartheta}}(u)}\langle v\rangle^\beta \mathrm{d}u \lesssim 1,\\
&\int_{\mathbb{R}^3}
|u-v|^\gamma \mu^{1/4}(u)\mu^{1/4}(v)\frac{w_{\widetilde{\vartheta}}(v)}{w_{\widetilde{\vartheta}}(u)}\langle v\rangle^\beta \mathrm{d}v \lesssim 1,
\end{align*}
so the contribution of $K_{v, 1}f$ is $\int_0^t \|w_{\widetilde{\vartheta}}\alpha^\beta_{f, \varepsilon}\partial f\|_p^p + \int_0^t \|w_{\widetilde{\vartheta}} f\|_p^p$.

For the term $K_{v,2}f=K_{v,2}^\chi f+K_{v,2}^{1-\chi}f$, by Lemma 1 in \cite{Strain}, it holds the crucial estimate that
\begin{align*}
|\partial_v \mathbf{k}_2^\chi (v, u)| \lesssim \frac{\exp\left( -\frac{s_2}{8}|u-v|^2- \frac{s_1}{8}\frac{(|v|^2-|u|^2)^2}{|v-u|^2}\right)}{|v-u|(1+|v|+|u|)^{1-\gamma}}.
\end{align*}
Hence by the former discussion, we know the contribution of $K_{v, 2}^\chi f$ is  $\int_0^t \|w_{\widetilde{\vartheta}}\alpha^\beta_{f, \varepsilon}\partial f\|_p^p + \int_0^t \|w_{\widetilde{\vartheta}} f\|_p^p$.

For the term $K_{v, 2}^{1-\chi} f$, we have
\begin{align*}
K_{v, 2}^{1-\chi} f
=& \int_{\mathbb{R}^3 \times \mathbb{S}^2}|u-v|^\gamma (1-\chi)b_0(\theta)[(\nabla_v \sqrt{\mu(u)}\sqrt{\mu(u')}+ \sqrt{\mu(u)}\nabla_v \sqrt{\mu(u')})f(v')\\
&+ (\nabla_v \sqrt{\mu(u)}\sqrt{\mu(v')}+ \sqrt{\mu(u)}\nabla_v \sqrt{\mu(v')})f(u')]\mathrm{d}u \mathrm{d}\omega\\
\lesssim &  \int_{\mathbb{R}^3 \times \mathbb{S}^2}|u-v|^\gamma (1-\chi)b_0(\theta)[\mu^{1/4}(u)\mu^{1/4}(u')f(v')+ \mu^{1/4}(u)\mu^{1/4}(v')f(u')]\mathrm{d}u \mathrm{d}\omega.
\end{align*}
Through the same way to estimate $K_2^{1-\chi} f$ in \emph{Step 1} again, we know the contribution of $K_{v, 2}^{1- \chi}f$ is $\int_0^t \|w_{\widetilde{\vartheta}}\alpha^\beta_{f, \varepsilon}\partial f\|_p^p + \int_0^t \|w_{\widetilde{\vartheta}} f\|_p^p$.

In summary, the contribution of $|\Gamma_v(f, f)|+|K_v f|$ is
\begin{align*}
\left(1+\sup_{0 \leq s \leq t}\|w_{4 \vartheta /s_0}f\|_\infty\right) \left(\int_0^t \|w_{\widetilde{\vartheta}}\alpha^\beta_{f, \varepsilon}\partial f\|_p^p + \int_0^t \|w_{\widetilde{\vartheta}} f\|_p^p\right).
\end{align*}

(5). $\Gamma(\partial f, f)+ \Gamma (f, \partial f)$.
First, it holds for $\Gamma_{\mathrm{gain}}^\chi (\partial f, f)+\Gamma_{\mathrm{gain}}^\chi (f, \partial f)$ that
\begin{align*}
&\alpha^\beta_{f, \varepsilon}w_{\widetilde{\vartheta}}(\Gamma_{\mathrm{gain}}^\chi (\partial f, f)+\Gamma_{\mathrm{gain}}^\chi (f, \partial f) )\\
\lesssim & \|w_{4 \vartheta /s_0}f\|_\infty \int_{\mathbb{R}^3}\mathbf{k}_{2, 4\vartheta /s_0}^\chi (v, u)\frac{w_{\widetilde{\vartheta}}(v)}{w_{\widetilde{\vartheta}}(u)}\langle v\rangle^\beta
|w_{\widetilde{\vartheta}}\partial f(u)|\mathrm{d}u\\
\lesssim & \|w_{4 \vartheta /s_0}f\|_\infty \int_{\mathbb{R}^3} \frac{\exp\left( -2\vartheta |u-v|^2- 2\vartheta \frac{(|v|^2-|u|^2)^2}{|v-u|^2}\right)}{|v-u|}
\frac{w_{\widetilde{\vartheta}}(v)}{w_{\widetilde{\vartheta}}(u)}
|w_{\widetilde{\vartheta}}\partial f(u)|\mathrm{d}u.
\end{align*}
Denote $\mathbf{k}_{2\vartheta}(v, u):=\frac{\exp\left( -2\vartheta |u-v|^2- 2\vartheta \frac{(|v|^2-|u|^2)^2}{|v-u|^2}\right)}{|v-u|} $ and $\mathbf{k}_w (v, u):= \mathbf{k}_{2\vartheta}(v, u)\frac{w_{\widetilde{\vartheta}}(v)}{w_{\widetilde{\vartheta}}(u)} $. We have already known that $\int_{\mathbb{R}^3} \mathbf{k}_w (v, u) \mathrm{d}u \lesssim 1$ and $\int_{\mathbb{R}^3} \mathbf{k}_w (v, u) \mathrm{d}u \lesssim 1$. Moreover, see for example the appendix in \cite{Kim}, for $2(2\vartheta -\vartheta_1) > \widetilde{\vartheta}$, i.e. $\vartheta_1 < 2\vartheta - \frac{\widetilde{\vartheta}}{2}$, we have $\mathbf{k}_w (v, u)\lesssim \mathbf{k}_{\vartheta_1} (v, u)$. Note that $\mathbf{k}_{\vartheta_1} (v, u)$ is symmetric on $v$ and $u$, but $\mathbf{k}_w (v, u)$ is obviously not.

For some $N>1$, we divide $\mathbb{R}^3_u$ into two parts:  $\{|u|\geq N\}$ and $ \{|u|\leq N\}$.

$(\mathrm{i})$. For the part $\{|u|\geq N\}$, it holds for $\frac{1}{p}+ \frac{1}{q}=1$ that
\begin{align*}
&\int_{\{|u|\geq N\}} \mathbf{k}_w^{\frac{1}{q}+ \frac{1}{p}}(v, u)|w_{\widetilde{\vartheta}}\partial f(u)|\mathrm{d}u\\
\lesssim & \left(\int_{\{|u|\geq N\}}\mathbf{k}_{\vartheta_1}(v, u) \frac{1}{\alpha^{\beta q}_{f, \varepsilon}}\mathrm{d}u \right)^{\frac{1}{q}}\left(\int_{\{|u|\geq N\}}\mathbf{k}_w (v, u)|w_{\widetilde{\vartheta}}\alpha^\beta_{f, \varepsilon}\partial f|^p \mathrm{d}u \right)^{\frac{1}{p}}.
\end{align*}
Because $\beta q = \beta \frac{p}{p-1}< \frac{2-\varpi}{3-\varpi} \frac{p}{p-1}< 1$, by Lemma \ref{alpha beta}, it holds
\begin{align*}
\int_{\{|u|\geq N\}}\mathbf{k}_{\vartheta_1}(v, u) \frac{1}{\alpha^{\beta q}_{f, \varepsilon}}\mathrm{d}u \lesssim \int_{\{|u|\geq N\}} \frac{\exp(- \vartheta_1 |v-u|^2)}{|v-u|} \frac{1}{\alpha^{\beta q}_{f, \varepsilon}}\mathrm{d}u \lesssim 1.
\end{align*}
Hence,
\begin{align*}
&\int_{\Omega \times \mathbb{R}^3}\left|\int_{\{|u|\geq N\}} \mathbf{k}_w(v, u)|w_{\widetilde{\vartheta}}\partial f(u)|\mathrm{d}u\right|^p \mathrm{d}x \mathrm{d}v\\
\lesssim & \int_{\Omega \times \mathbb{R}^3}\int_{\{|u|\geq N\}} \mathbf{k}_w (v, u)|w_{\widetilde{\vartheta}}\alpha^\beta_{f, \varepsilon}\partial f(u)|^p \mathrm{d}u \mathrm{d}x \mathrm{d}v \\
\lesssim & \|w_{\widetilde{\vartheta}}\alpha^\beta_{f, \varepsilon}\partial f\|_{L^p_{x, u}}^p.
\end{align*}

$(\mathrm{ii})$. For the part $\{|u|\leq N\}$, we have
\begin{align*}
&\int_{\{|u|\leq N\}} \mathbf{k}_w(v, u)|w_{\widetilde{\vartheta}}\partial f(u)|\mathrm{d}u\\
\lesssim & \|w_{\widetilde{\vartheta}}\alpha^\beta_{f,\varepsilon}\partial f\|_p
\left(\int_{\mathbb{R}^3} \frac{\mathbf{k}_{\vartheta_1}^q (v, u)\mathbf{1}_{\{|u|\leq N\}}}{\alpha^{\beta q}_{f, \varepsilon}(u)}\mathrm{d}u\right)^{\frac{1}{q}}
\end{align*}
In light of \cite{Kim}, we notice that
\begin{align*}
1+ \frac{1}{\frac{p}{q}}= \frac{1}{\frac{3-\varpi}{q}}+ \frac{1}{\frac{\sigma}{q}},\,\,\, \sigma= \frac{3-\varpi}{2-\varpi},
\end{align*}
and
\begin{align*}
\int_{\mathbb{R}^3} \frac{\mathbf{k}_{\vartheta_1}^q (v, u)\mathbf{1}_{\{|u|\leq N\}}}{\alpha^{\beta q}_{f, \varepsilon}(u)}\mathrm{d}u \leq \left|\frac{e^{-\vartheta_1 q |\cdot|^2}}{|\cdot|^q}\ast \frac{\mathbf{1}_{\{|\cdot|\leq N\}}}{\alpha^{\beta q}_{f, \varepsilon}(\cdot)}\right|,
\end{align*}
then we use Young's inequality of the convolution version, that is
\begin{align*}
\left\|\frac{e^{-\vartheta_1 q |\cdot|^2}}{|\cdot|^q}\ast \frac{\mathbf{1}_{\{|\cdot|\leq N\}}}{\alpha^{\beta q}_{f, \varepsilon}(\cdot)}\right\|_{\frac{p}{q}} \lesssim \left\|\frac{e^{-\vartheta_1 q |\cdot|^2}}{|\cdot|^q}\right\|_{\frac{3-\varpi}{q}} \times \left\|\frac{\mathbf{1}_{\{|\cdot|\leq N\}}}{\alpha^{\beta q}_{f, \varepsilon}(\cdot)}\right\|_{\frac{\sigma}{q}}\lesssim \left\|\frac{\mathbf{1}_{\{|\cdot|\leq N\}}}{\alpha^{\beta q}_{f, \varepsilon}(\cdot)}\right\|_{\frac{\sigma}{q}}.
\end{align*}
Note that $\beta \sigma = \beta \frac{3-\varpi}{2-\varpi}<1$ and use Lemma \ref{alpha beta} again, we know that the right-hand side above is finite. Hence,
\begin{align*}
\int_{\mathbb{R}^3_u} \mathbf{k}_w(v, u)|w_{\widetilde{\vartheta}}\partial f(u)|\mathrm{d}u \lesssim  \|w_{\widetilde{\vartheta}}\alpha^\beta_{f, \varepsilon}\partial f\|_p^p.
\end{align*}
Therefore, the contribution of $\Gamma_{\mathrm{gain}}^\chi (\partial f, f)+\Gamma_{\mathrm{gain}}^\chi (f, \partial f)$ is $\sup_{0 \leq s \leq t}\|w_{4 \vartheta /s_0}f\|_\infty \int_0^t \|w_{\widetilde{\vartheta}}\alpha^\beta_{f, \varepsilon}\partial f\|_p^p$.

Next, for the term $\Gamma_{\mathrm{gain}}^{1-\chi}(\partial f, f)+ \Gamma_{\mathrm{gain}}^{1-\chi}(f, \partial f)$,
if $|u-v|\leq 2\varepsilon$, we have already known that $|v'-u|=|u'-v|\leq 4 \varepsilon$. Recalling the definition of $\alpha_{f, \varepsilon}$ (\ref{kinetic weight}), we divide $t-t_b(t, x, v):= \varsigma(x,v)$ into three cases:

(i). $t-t_b(t, x, v) \leq -\varepsilon$. It holds in this case that $\alpha_{f, \varepsilon}(t, x, v) \equiv 1$. By the continuity of $\varsigma(x, \cdot)$, for $\varepsilon$ sufficiently small, we have $t-t_b(t, x, u)\leq -\frac{\varepsilon}{2}$, so as $u'$ and $v'$. Thus, $\alpha_{f, \varepsilon}(t, x, u)\geq 1-\widetilde{\chi}(\frac{1}{2})\gtrsim 1$, so as $u'$ and $v'$. Hence $\alpha_{f, \varepsilon}(v)\lesssim \alpha_{f, \varepsilon}(u)$, so as $u'$ and $v'$.

(ii). $t-t_b(t, x, v)\geq 0$. It holds in this case that $\alpha_{f, \varepsilon}(t,x,v) = |n(x_b)\cdot v_b|$. We consider
\begin{align*}
|n(x_b)\cdot v_b|-|n(x_b')\cdot u_b|\leq|(n(x_b)-n(x_b'))\cdot v_b|+|n(x_b')\cdot (v_b-u_b)|\leq |\nabla n(\xi)||v_b||x_b-x_b'|+ |v_b-u_b|.
\end{align*}
Here $\xi$ lies between $x_b$ and $x_b'$, $x_b'= x'(t-t_b'(t,x,u))$ and $u_b=u(t-t_b'(t,x,u))$, where $x'(s;t,x,u)$ and  $u(s;t,x,u)$ are the characteristics with $x'(t)=x$ and $u(t)=u$, respectively. Note that
\begin{align*}
v(s)=&v-\int_s^t E(\tau, x(\tau))\mathrm{d}\tau,\\
x(s)=&x-(t-s)v + \int_s^t \int^t_{s_1}E(\tau, x(\tau))\mathrm{d}\tau\mathrm{d}s_1.
\end{align*}
It holds that
\begin{align*}
\frac{\partial v(s)}{\partial v}=\mathrm{Id}_{3\times 3}-\int_s^t \nabla E(\tau, x(\tau))\nabla_v x(\tau)\mathrm{d}\tau,
\end{align*}
by Lemma \ref{lemma10}, we have
\begin{align*}
\int_s^t \nabla E(\tau, x(\tau))\nabla_v x(\tau)\mathrm{d}\tau \lesssim \int_s^t\delta_2 e^{-\Lambda_2 \tau}(t-\tau)\mathrm{d}\tau \lesssim 1.
\end{align*}
Hence $\left|\frac{\partial v(s)}{\partial v}\right|\lesssim 1$. By a similar deduction, it holds that $\left|\frac{\partial x(s)}{\partial v}\right| \lesssim |t-s|^3$. Thus,
\begin{align*}
\left|\frac{\partial v_b}{\partial v}\right| \lesssim 1,\quad \left|\frac{\partial x_b}{\partial v}\right| \lesssim |t_b|^3 \lesssim |t|^3+ \varepsilon.
\end{align*}
Hence we have
\begin{align*}
e^{-C|v|^2}(|n(x_b)\cdot v_b|-|n(x_b')\cdot u_b|)\lesssim (1+t^3)|u-v|\lesssim \varepsilon,\,\,\mathrm{for}\,\,t\leq T \,\,\mathrm{finite}.
\end{align*}
Therefore, in this case, it holds that
\begin{align*}
e^{-C|v|^2}\alpha_{f,\varepsilon}(t,x,v)\leq \alpha_{f,\varepsilon}(t,x,u)+O(\varepsilon),
\end{align*}
so as $u'$ and $v'$.

(iii). $-\varepsilon < t-t_b(t,x,v) < 0$. By the uniform continuity of $h(\cdot, \cdot)=t-t_b(t,x,v)$, we have
\begin{align*}
\int_{\Omega \times \mathbb{R}^3}\mathbf{1}_{-\varepsilon < t-t_b < 0}(x,v)\mathrm{d}x\mathrm{d}v= O(\varepsilon).
\end{align*}

We deal with the $O(\varepsilon)$ from the above two cases.
Note that $\alpha_{f,\varepsilon}(t,x,v)\lesssim 1+ |v_b|\lesssim \langle v\rangle$, and $\sqrt{\mu(u)}\leq C \sqrt{\mu(v)}$ for sufficiently small $\varepsilon>0$, then we have
\begin{align*}
\alpha^\beta_{f,\varepsilon}(v)w_{\widetilde{\vartheta}}(v)\sqrt{\mu(u)}\lesssim \mu^{\frac{1}{4}}(u).
\end{align*}
Therefore, the contributions of both of $O(\varepsilon)$ are controlled by
\begin{align*}
O(\varepsilon)\|w_{\widetilde{\vartheta}}f\|_\infty \|\nabla_{x,v}f\|_\infty.
\end{align*}
From the discussion above, we can control the term
\begin{align*}
w_{\widetilde{\vartheta}}\alpha^\beta_{f, \varepsilon}(\Gamma^{1-\chi}_{\mathrm{gain}}(\partial f, f)+ \Gamma^{1-\chi}_{\mathrm{gain}}(f, \partial f)+\Gamma^{1-\chi}_{\mathrm{loss}}(\partial f, f))
\end{align*}
by
\begin{align*}
 &\|w_{\widetilde{\vartheta}}f\|_\infty \int_{\mathbb{R}^3} |u-v|^\gamma (1-\chi)
\left(|w_{\widetilde{\vartheta}}\alpha^\beta_{f, \varepsilon}\partial f|(u')
+ |w_{\widetilde{\vartheta}}\alpha^\beta_{f, \varepsilon}\partial f|(v')
+ |w_{\widetilde{\vartheta}}\alpha^\beta_{f, \varepsilon}\partial f|(u)\right)\mathrm{d}u\\
&+ O(\varepsilon)\|w_{\widetilde{\vartheta}}f\|_\infty \|\nabla_{x,v}f\|_\infty.
\end{align*}
Note that
\begin{align*}
A:= & \int_{\mathbb{R}^3} |u-v|^\gamma (1-\chi)
|w_{\widetilde{\vartheta}}\alpha^\beta_{f, \varepsilon}\partial f|(u')\mathrm{d}u \\
\leq & \left(\int_{\mathbb{R}^3} |u-v|^\gamma (1-\chi)
|w_{\widetilde{\vartheta}}\alpha^\beta_{f, \varepsilon}\partial f|^p (u') \mathrm{d}u\right)^{\frac{1}{p}}\left(\int_{\mathbb{R}^3} |u-v|^\gamma (1-\chi)\mathrm{d}u\right)^{\frac{1}{q}}.
\end{align*}
Thus, by $\mathrm{d}u\mathrm{d}v=\mathrm{d}u'\mathrm{d}v'$, we have
\begin{align*}
\int_{\mathbb{R}^3}|A|^p \mathrm{d}v \lesssim &\varepsilon^{\frac{(\gamma+3)p}{q}}
\int_{\mathbb{R}^3} |u-v|^\gamma (1-\chi)\mathrm{d}v
\int_{\mathbb{R}^3}|w_{\widetilde{\vartheta}}\alpha^\beta_{f, \varepsilon}\partial f|^p (u) \mathrm{d}u\\
\lesssim & \varepsilon^{(\gamma+3)p}\|w_{\widetilde{\vartheta}}\alpha^\beta_{f, \varepsilon}\partial f\|^p_{L^p_u}.
\end{align*}
The other two terms share the same argument, thus the contribution of $\Gamma^{1-\chi}_{\mathrm{gain}}(\partial f, f)+ \Gamma^{1-\chi}_{\mathrm{gain}}(f, \partial f)+\Gamma^{1-\chi}_{\mathrm{loss}}(\partial f, f)$ is $\sup_{0 \leq s\leq t}\|w_{\widetilde{\vartheta}}f\|_\infty \int_0^t (\|w_{\widetilde{\vartheta}}\alpha^\beta_{f, \varepsilon}\partial f\|^p_p+O(\varepsilon^p)\|\nabla_{x,v}f\|^p_\infty)$.

Now we deal with $\Gamma_{\mathrm{gain}}^{1-\chi}(\partial f, f)+ \Gamma_{\mathrm{gain}}^{1-\chi}(f, \partial f)$ with $|u-v|\geq 2 \varepsilon$, which yields $|u-v|^\gamma \lesssim \varepsilon^\gamma$. We also notice that $\alpha_{f, \varepsilon}\lesssim \langle v\rangle$, thus it holds
\begin{align*}
w_{\widetilde{\vartheta}}\alpha^\beta_{f, \varepsilon}\Gamma_{\mathrm{loss}}^\chi(\partial f, f)\leq & \|w_{\frac{4 \vartheta}{s_0}}f\|_\infty e^{-C |v|^2} \int |u-v|^\gamma \chi(|u-v|) \mu^{\frac{1}{4}}(u)w_{\widetilde{\vartheta}}^{-1}(v)|w_{\widetilde{\vartheta}}\partial f(u)|\mathrm{d}u\\
\lesssim & \|w_{\frac{4 \vartheta}{s_0}}f\|_\infty \varepsilon^\gamma e^{-C |v|^2}\int \mu^{\frac{1}{4}}(u)w_{\widetilde{\vartheta}}^{-1}(v) e^{\varrho |v-u|^2}e^{-\varrho |v-u|^2}|w_{\widetilde{\vartheta}}\alpha^\beta_{f, \varepsilon}\partial f(u)|\alpha^{-\beta}_{f, \varepsilon}(u)\mathrm{d}u.
\end{align*}
For $\varrho>0$ sufficiently small, it holds $\mu^{\frac{1}{4}}(u)w_{\widetilde{\vartheta}}^{-1}(v) e^{\varrho |v-u|^2} \lesssim 1$. Note again that $\beta q <1$, we have
\begin{align*}
w_{\widetilde{\vartheta}}\alpha^\beta_{f, \varepsilon}\Gamma_{\mathrm{loss}}^\chi(\partial f, f)\lesssim & e^{-C |v|^2} \|w_{\widetilde{\vartheta}}\alpha^\beta_{f, \varepsilon} \partial f\|_{L^p_u} \left( \int \frac{e^{-\varrho q |v-u|^2}}{\alpha^{\beta q}_{f, \varepsilon}(u)}\mathrm{d}u\right)^{\frac{1}{q}}\\
\lesssim & e^{-C |v|^2} \|w_{\widetilde{\vartheta}}\alpha^\beta_{f, \varepsilon} \partial f\|_{L^p_u}.
\end{align*}
Hence we have $\int |w_{\widetilde{\vartheta}}\alpha^\beta_{f, \varepsilon}\Gamma_{\mathrm{loss}}^\chi(\partial f, f)|^p \mathrm{d}v \lesssim \|w_{\widetilde{\vartheta}}\alpha^\beta_{f, \varepsilon} \partial f\|^p_{L^p_u}$.

For the last term $\Gamma_{\mathrm{loss}}(f, \partial f)$, it is clear that
\begin{align*}
w_{\widetilde{\vartheta}}\alpha^\beta_{f, \varepsilon}\Gamma_{\mathrm{loss}}(f, \partial f)\leq & |w_{\widetilde{\vartheta}}\alpha^\beta_{f, \varepsilon}\partial f(v)|\int |u-v|^\gamma b_0(\theta)\sqrt{\mu(u)}w_{\widetilde{\vartheta}}^{-1}(u)(w_{\widetilde{\vartheta}}f(u))
\mathrm{d}u \mathrm{d}\omega\\
\lesssim & |w_{\widetilde{\vartheta}}\alpha^\beta_{f, \varepsilon}\partial f(v)| \|w_{\widetilde{\vartheta}}f\|_\infty.
\end{align*}

In summary, the contribution of $\Gamma (\partial f, f)+ \Gamma(f, \partial f)$ is
\begin{align*}
\left(1+\sup_{0 \leq s \leq t}\|w_{4 \vartheta /s_0}f\|_\infty\right) \int_0^t \|w_{\widetilde{\vartheta}}\alpha^\beta_{f, \varepsilon}\partial f\|_p^p+ O(\varepsilon)\sup_{0\leq s\leq t}\|w_{\widetilde{\vartheta}}f\|_\infty \int_0^t \|\nabla_{x,v}f\|^p_\infty.
\end{align*}

(6). $K \partial f$.
The estimate of the term $K \partial f$ shares the same way as $\Gamma (\partial f, f)+ \Gamma(f, \partial f)$. Omitting the details for brevity, we obtain that the contribution of $K \partial f$ is
\begin{align*}
\int_0^t \|w_{\widetilde{\vartheta}}\alpha^\beta_{f, \varepsilon}\partial f\|_p^p+O(\varepsilon) \int_0^t \|\nabla_{x,v}f\|^p_\infty.
\end{align*}

(7). $\int_0^t |w_{\widetilde{\vartheta}}\alpha^\beta_{f, \varepsilon}\partial f|_{p, -}^p$.
We consider (\ref{perturb eqn}), namely,
\begin{align*}
[\partial_t + v\cdot \nabla_x - \nabla_x \phi_f \cdot \nabla_v ]f= -Lf + \Gamma(f, f)-\frac{v}{2}\cdot \nabla_x \phi_f f-v \cdot \nabla_x \phi_f \sqrt{\mu}:= G(f),
\end{align*}
with $f(t, x, v)= g(t, x, v)$ on $(x, v)\in \gamma_-$ and $G(f)$ is the source term. By Duhamel's principle, we can write the solution $f$ as
\begin{align*}
f(t, x, v)= g(t-t_b, x_b, v_b)+ \int_{t-t_b}^t G(f)(s, x(s), v(s))\mathrm{d}s.
\end{align*}
Taking $x$ and $v$ derivatives to both sides, we have
\begin{align}\label{Duhamel}
\nabla_{x, v}f(t, x, v)= \nabla_{x, v}[g(t-t_b, x_b, v_b)]+ \int_{t-t_b}^t \nabla_{x, v}[G(f)(s, x(s), v(s))]\mathrm{d}s.
\end{align}
Along the trajectory, as $(x, v)\rightarrow \gamma_-$, we notice that $t_b(t, x ,v)\rightarrow 0$, i.e. $t-t_b \rightarrow t$, $x_b(t, x, v)\rightarrow x$ and $v_b(t, x, v)\rightarrow v$, so the second term on the right-hand side of (\ref{Duhamel}) vanishes. Hence we obtain that
\begin{align*}
|\nabla_{x, v}f(t, x, v)||_{\gamma_-}=& |\nabla_{x, v}[g(t-t_b, x_b, v_b)]|\\
= & |\nabla_{t, x, v}g| \cdot \left|\frac{\partial (t-t_b, x_b, v_b)}{\partial(x, v)}\right|\\
= & |\nabla_{t, x, v}g|\cdot \frac{1}{|n(x_b)\cdot v_b|} \,\,\,(\mathrm{using\,\, Lemma} \,\ref{lemma 3})\\
= & |\nabla_{t, x, v}g|\cdot \frac{1}{|n(x)\cdot v|}
\end{align*}
By the definition of the kinetic weight $\alpha^\beta_{f, \varepsilon}$, we know that $\alpha^\beta_{f, \varepsilon}= |n(x)\cdot v|^\beta$ on $(x, v)\in \gamma_-$. Therefore,
\begin{align*}
\int_0^t |w_{\widetilde{\vartheta}}\alpha^\beta_{f, \varepsilon}\partial f|_{p, -}^p \lesssim \int_0^t \int w_{\widetilde{\vartheta}}^p |\partial g|^p |n(x)\cdot v|^{\beta p-p+1}\mathrm{d}|v_\parallel|\mathrm{d} v_\perp \mathrm{d}S_x \mathrm{d} s,
\end{align*}
where $v_\parallel = \{v\cdot n(x)\}n(x)$, $v_\perp = v- v_\parallel$, $|v_\parallel|=|n(x)\cdot v|$ and $\mathrm{d}v= \mathrm{d}|v_\parallel|\mathrm{d} v_\perp$. We notice that $\beta p-p+1 > \frac{p-2}{p}p-p+1>-1$, so $|n(x)\cdot v|^{\beta p-p+1}= |v_\parallel|^{\beta p-p+1} \in L^1_{\mathrm{loc}}(\mathbb{R}^1_{|v_\parallel|}\times \mathbb{R}^2_{v_\perp})= L^1_{\mathrm{loc}}(\mathbb{R}^3_v)$. Therefore,
\begin{align*}
\int_0^t |w_{\widetilde{\vartheta}}\alpha^\beta_{f, \varepsilon}\partial f|_{p, -}^p \lesssim \int_0^T \|w_{4\vartheta}\partial g\|^p_\infty.
\end{align*}

In conclusion, we have
\begin{align*}
\|w_{\widetilde{\vartheta}}f(t)\|_p^p + \|w_{\widetilde{\vartheta}}\alpha^\beta_{f, \varepsilon}\partial f(t)\|_p^p \lesssim &\|w_{\widetilde{\vartheta}}f(0)\|_p^p + \|w_{\widetilde{\vartheta}}\alpha^\beta_{f, \varepsilon}\partial f(0)\|_p^p + \|w_{\widetilde{\vartheta}}g \|^p_{L^p((0,T)\times \gamma_-)} + \int_0^T \|w_{4\vartheta}\partial g\|^p_\infty \\
& + \left(1+ \sup_{0 \leq s \leq t}\|\nabla^2 \phi\|_\infty + \sup_{0 \leq s \leq t}\|w_{\frac{4\vartheta}{s_0}}f\|_\infty\right) \int_0^t \left(\|w_{\widetilde{\vartheta}}f\|_p^p + \|w_{\widetilde{\vartheta}}\alpha^\beta_{f, \varepsilon}\partial f\|_p^p\right)\\
& + O(\varepsilon) \int_0^t \|\nabla_{x,v}f\|^p_\infty.
\end{align*}
For given $f_0$ and $g$ as the initial datum and the in-flow condition,
by using the condition \eqref{3.3}, we make $O(\varepsilon)$ term be absorbed into the first line on right-hand side above when $\varepsilon$ is sufficiently small. Eventually, we have
\begin{align*}
\|w_{\widetilde{\vartheta}}f(t)\|_p^p + \|w_{\widetilde{\vartheta}}\alpha^\beta_{f, \varepsilon}\partial f(t)\|_p^p \lesssim &\|w_{\widetilde{\vartheta}}f(0)\|_p^p + \|w_{\widetilde{\vartheta}}\alpha^\beta_{f, \varepsilon}\partial f(0)\|_p^p + \|w_{\widetilde{\vartheta}}g \|^p_{L^p((0,T)\times \gamma_-)} + \int_0^T \|w_{4\vartheta}\partial g\|^p_\infty \\
& + \left(1+ \sup_{0 \leq s \leq t}\|\nabla^2 \phi\|_\infty + \sup_{0 \leq s \leq t}\|w_{\frac{4\vartheta}{s_0}}f\|_\infty\right) \int_0^t \left(\|w_{\widetilde{\vartheta}}f\|_p^p + \|w_{\widetilde{\vartheta}}\alpha^\beta_{f, \varepsilon}\partial f\|_p^p\right).
\end{align*}
Using Gronwall's inequality, we finally obtain the desired result and hence complete the proof of Proposition \ref{prop 3}.
\end{proof}

\begin{proposition}\label{prop 4}
Assume that $(f,\phi_f)$ solve the problem \eqref{perturb eqn}--\eqref{incoming}, and satisfy the conditions in Proposition $\ref{prop 3}$. We also assume that
\begin{equation*}
\|w_{4\vartheta}\nabla_v f_0\|_{L^3_{x,v}}<\infty\,\,\,\,\mathrm{and}\,\,\,\,\sup_{0 \leq s \leq t}\|w_{\frac{8 \vartheta}{s_0}}f\|_\infty \ll 1.
\end{equation*}
Then
\begin{equation*}
\|w_{\widetilde{\vartheta}}\nabla_v f(t)\|_{L^3_x(\Omega) L_v^{1+\delta}(\mathbb{R}^3)} \lesssim_t 1, \,\forall \, t\in (0,T).
\end{equation*}

\end{proposition}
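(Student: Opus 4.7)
The plan is to deduce the mixed-norm bound on $\nabla_v f$ from the weighted $W^{1,p}$ estimate of Proposition \ref{prop 3} by a H\"older interpolation in $v$ that trades the degenerate kinetic weight $\alpha^\beta_{f,\varepsilon}$ for a loss of integrability in $v$.

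First I would apply Proposition \ref{prop 3} (in its finite-time form) with a slightly enlarged time--velocity weight $w_{\widetilde{\vartheta}'}$ for some $\vartheta'\in(\vartheta,2\vartheta)$; this enlargement is permissible because the standing hypotheses control the data with the heavier weights $w_{2\vartheta}$, $w_{4\vartheta}$, $w_{8\vartheta/s_0}$ appearing in \eqref{3.-4} and in the initial/boundary assumptions. This yields, for $3<p<6-2\varpi$ and $\beta\in(\tfrac{p-2}{p},\tfrac{2-\varpi}{3-\varpi})$,
\begin{equation*}
\|w_{\widetilde{\vartheta}'}\alpha^\beta_{f,\varepsilon}\nabla_v f(t)\|_{L^p(\Omega\times\mathbb{R}^3)}\lesssim_t 1,\qquad t\in[0,T].
\end{equation*}

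Next, I would fix $\delta=\delta(p,\beta)>0$ so small that $\sigma:=\beta(1+\delta)\tfrac{p}{p-1-\delta}<1$; this is possible since $\beta<\tfrac{2-\varpi}{3-\varpi}<1-\tfrac{1}{p}$ whenever $p>3$. For fixed $(t,x)$, H\"older's inequality in $v$ with exponents $\tfrac{p}{1+\delta}$ and $\tfrac{p}{p-1-\delta}$ applied to the splitting
\begin{equation*}
|w_{\widetilde{\vartheta}}\nabla_v f|^{1+\delta}=|w_{\widetilde{\vartheta}'}\alpha^\beta_{f,\varepsilon}\nabla_v f|^{1+\delta}\Bigl(\tfrac{w_{\widetilde{\vartheta}}}{w_{\widetilde{\vartheta}'}}\Bigr)^{1+\delta}\alpha^{-\beta(1+\delta)}_{f,\varepsilon}
\end{equation*}
would give the pointwise-in-$x$ estimate
\begin{equation*}
\|w_{\widetilde{\vartheta}}\nabla_v f(t,x,\cdot)\|_{L^{1+\delta}_v}\lesssim \|w_{\widetilde{\vartheta}'}\alpha^\beta_{f,\varepsilon}\nabla_v f(t,x,\cdot)\|_{L^p_v}\,I(t,x)^{\frac{p-1-\delta}{(1+\delta)p}},
\end{equation*}
where $I(t,x):=\int_{\mathbb{R}^3}\bigl(w_{\widetilde{\vartheta}}/w_{\widetilde{\vartheta}'}\bigr)^{\frac{(1+\delta)p}{p-1-\delta}}\alpha^{-\sigma}_{f,\varepsilon}\,dv$.

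The ratio $(w_{\widetilde{\vartheta}}/w_{\widetilde{\vartheta}'})^{\frac{(1+\delta)p}{p-1-\delta}}$ equals a Gaussian $e^{-c|v|^2}$ with $c>0$ (since $\widetilde{\vartheta}'>\widetilde{\vartheta}$), which supplies exactly the decay needed to integrate $\alpha^{-\sigma}_{f,\varepsilon}$ over all of $\mathbb{R}^3_v$: splitting $\{|v|\le N\}\cup\{|v|>N\}$ and invoking respectively the two bounds of Lemma \ref{alpha beta} (the latter with the Gaussian playing the role of $e^{-C|v-u|^2}$) will give $I(t,x)\lesssim 1$ uniformly in $x\in\Omega$ and $t\in[0,T]$. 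Taking the $L^3_x$ norm and using $\|\cdot\|_{L^3_x}\le|\Omega|^{1/3-1/p}\|\cdot\|_{L^p_x}$ (valid since $p>3$ and $\Omega$ is bounded) then yields
\begin{equation*}
\|w_{\widetilde{\vartheta}}\nabla_v f(t)\|_{L^3_x L^{1+\delta}_v}\lesssim \|w_{\widetilde{\vartheta}'}\alpha^\beta_{f,\varepsilon}\nabla_v f(t)\|_{L^p(\Omega\times\mathbb{R}^3)}\lesssim_t 1.
\end{equation*}
The main obstacle I anticipate is the simultaneous calibration of $(\delta,\vartheta')$: one must secure $\sigma<1$ while preserving all the smallness and compatibility hypotheses of Proposition \ref{prop 3} (in particular \eqref{3.-4} and the bounds on $\phi_f$) under the slight enlargement of the weight, and then verify the uniform bound on $I(t,x)$ via Lemma \ref{alpha beta} using the hypotheses inherited from that proposition.
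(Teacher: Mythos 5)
Your argument is correct in substance, but it is a genuinely different route from the paper's. The paper proves Proposition \ref{prop 4} dynamically: it writes the Duhamel/mild formulation for $w_{\widetilde{\vartheta}}\partial_v f$ along the characteristics, uses the positivity of $\widetilde{\nu}$ to discard the exponential factors, and then estimates every term of the source $\mathcal{H}$ (in particular $K_2^\chi\nabla_v f$ and $\Gamma^\chi_{\mathrm{gain}}(\nabla_v f,f)+\Gamma^\chi_{\mathrm{gain}}(f,\nabla_v f)$, via a three-case decomposition in $|v|,|u|$) directly in the mixed norm $L^3_xL^{1+\delta}_v$, closing with Gronwall and Proposition \ref{prop 3}. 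You instead bypass the equation entirely and obtain the bound as a static consequence of Proposition \ref{prop 3} by H\"older in $v$, using a Gaussian weight gap $w_{\widetilde{\vartheta}}/w_{\widetilde{\vartheta}'}$ to integrate $\alpha^{-\sigma}_{f,\varepsilon}$ over $\mathbb{R}^3_v$ via Lemma \ref{alpha beta} (with $\sigma=\beta(1+\delta)\tfrac{p}{p-1-\delta}<1$, which indeed holds for small $\delta$ since $\beta q<1$), and $L^p_x\hookrightarrow L^3_x$ on the bounded domain. Your exponent bookkeeping and the application of both parts of Lemma \ref{alpha beta} check out. What your approach buys is brevity and the elimination of all the kernel estimates; what it costs is a strict loss in the velocity weight, since the $L^3_xL^{1+\delta}_v$ bound comes out with a weight strictly smaller than the one in the $W^{1,p}$ estimate. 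This is acceptable here: the paper itself runs a cascade $\vartheta>\vartheta_1>\vartheta_2$ in Theorem \ref{Thm1} and in Section 4 (and even the paper's own proof of Proposition \ref{prop 4} ends with $\|w_{2\widetilde{\vartheta}}\alpha^\beta_{f,\varepsilon}\nabla_v f\|_p$ on the right-hand side, so it too implicitly invokes Proposition \ref{prop 3} with an enlarged weight, consistent with the heavier-weight hypotheses $w_{4\vartheta}$, $w_{8\vartheta/s_0}$ of the statement). The one point you should make explicit is this relabeling: state the proposition with the target weight $\widetilde{\vartheta_2}$ and apply Proposition \ref{prop 3} with $\widetilde{\vartheta_1}$, $\vartheta_2<\vartheta_1$, verifying that the qualitative conditions \eqref{3.-1}--\eqref{3.-4} are assumed at the level of $\vartheta_1$; as literally written, your $\vartheta'\in(\vartheta,2\vartheta)$ enlargement asks for slightly more on the data than the statement grants.
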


\begin{remark}
We shall use the estimates in Proposition $\ref{prop 3}$ to prove this result, so we restrict $t$ in $(0,T)$. Note that as $t \rightarrow +\infty$, it is shown that $\|w_{\widetilde{\vartheta}}\nabla_v f(t)\|_{L^3_x L_v^{1+\delta}}$ do not have a finite upper bound.

\end{remark}

\begin{proof}[Proof of Proposition \ref{prop 4}]
From (\ref{perturb eqn}), we have
\begin{align*}
[\partial_t +v \cdot \nabla_x - \nabla \phi \cdot \nabla_v + \widetilde{\nu}](w_{\widetilde{\vartheta}}\partial_v f)= w_{\widetilde{\vartheta}} \mathcal{H},
\end{align*}
where
\begin{align}\label{H}
\mathcal{H}\equiv & \partial_v (Kf+ \Gamma(f, f))- \nabla \phi \cdot \partial_v (v \sqrt{\mu})-\partial_v v \cdot \nabla_x f- \partial_v \nu f- \partial_v \left(\frac{v}{2}\cdot \nabla \phi\right)f \nonumber\\
\lesssim & |\partial_x f|+ (1+ |\nabla \phi|)|f|+ |\nabla \phi|\mu^{\frac{1}{4}}+ \partial_v (Kf)+ \partial_v (\Gamma(f, f)).
\end{align}
Along the characteristic, it holds
\begin{align*}
\left( \frac{\mathrm{d}}{\mathrm{d}s}+ \widetilde{\nu}\right)(w_{\widetilde{\vartheta}}\partial_v f) = w_{\widetilde{\vartheta}}\mathcal{H},
\end{align*}
and thus
\begin{align*}
\frac{\mathrm{d}}{\mathrm{d}s}\left( e^{\int_t^s \widetilde{\nu}(\tau)\mathrm{d}\tau}w_{\widetilde{\vartheta}}\partial_v f\right)
= e^{\int_t^s \widetilde{\nu}(\tau)\mathrm{d}\tau} w_{\widetilde{\vartheta}}\mathcal{H}.
\end{align*}
Because $e^{\int_t^s \widetilde{\nu}(\tau)\mathrm{d}\tau} \leq 1$ for $s\leq t$, we have
\begin{align}\label{3,1+ norm}
w_{\widetilde{\vartheta}}\partial_v f(t, x, v)\leq \mathbf{1}_{t<t_b}|w_{\widetilde{\vartheta}}\partial_v f (0)|+ \mathbf{1}_{t\geq t_b}|w_{\widetilde{\vartheta}}\partial_v f (t-t_b)| +\int^t_{\max \{t-t_b, 0\}}w_{\widetilde{\vartheta}}\mathcal{H}(s, x(s), v(s))\mathrm{d}s.
\end{align}

We also notice that if $|v|> \frac{2 \delta_1}{\Lambda_1^{\frac{1}{\rho}}}\Gamma(\frac{1}{\rho}+1):= C_1$, where $\Gamma (a), a>0$, denotes the Gamma function, it holds for $0 \leq s \leq t$ that
\begin{align*}
|v(s)|\geq & |v|- \int_s^t |\nabla \phi|(\tau)\mathrm{d}\tau \\
\geq & |v|- \int_s^t \frac{\delta_1}{e^{\Lambda_1 \tau^\rho}}\mathrm{d}\tau \\
\geq & |v|- \frac{ \delta_1}{\Lambda_1^{\frac{1}{\rho}}}\Gamma\left(\frac{1}{\rho}+1\right)\geq \frac{|v|}{2}.
\end{align*}
Hence $w_{\widetilde{\vartheta}}^{-1}(v(s))\lesssim e^{-\widetilde{\vartheta}\frac{|v|^2}{4}} \in L^1 (\{|v|> C_1\})$. On the other hand, $w_{\widetilde{\vartheta}}^{-1}(v(s)) \leq 1 \in L^1(\{|v|\leq C_1\})$. So $w_{\widetilde{\vartheta}}^{-1}(v(s)) \in L^1 (\mathbb{R}^3_v)$. Moreover, it is a direct corollary that
\begin{align*}
\sup_{s, t, x}\|w_{\widetilde{\vartheta}}^{-1}(v(s))\|_{L^r_v} \lesssim 1, \,\,\mathrm{for} \,\,\mathrm{any}\,\, 1\leq r \leq \infty, \,\, 0 \leq s \leq t.
\end{align*}

We shall control $\|w_{\widetilde{\vartheta}}\partial_v f(t, x, v)\|_{L^3_x L^{1+\delta}_v}$ by dealing with the terms on the right-hand side of \eqref{3,1+ norm}, with the careful estimate \eqref{H}.

$(1)$. The contribution of $w_{\widetilde{\vartheta}}\partial_v f(0, x(0), v(0))$ and $w_{\widetilde{\vartheta}}\partial_v f(t-t_b, x_b, v_b)$.
By using H\"{o}lder's inequality to $x-$variable as $\frac{1}{1+\delta}=\frac{1}{3}+ \frac{1}{\frac{3(1+\delta)}{2-\delta}}$, we have
\begin{align*}
\|w_{\widetilde{\vartheta}}\partial_v f(0, x(0), v(0))\|_{L^3_x L^{1+\delta}_v}\leq & \|w_{4\vartheta}\partial_v f(0, x(0), v(0))\|_{L^3_{x, v}} \cdot \|w^{-1}_{2\vartheta}(v(0))\|_{L_v^{\frac{3(1+\delta)}{2-\delta}}}\\
\lesssim & \|w_{4\vartheta}\partial_v f(0, x(0), v(0))\|_{L^3_{x, v}}.
\end{align*}

On the other hand, because the $\mathbb{R}^3$ Lebesgue measure of $\partial \Omega$ is 0, it immediately holds
\begin{align*}
\|w_{\widetilde{\vartheta}}\partial_v f(t-t_b)\|_{L^3_x L^{1+\delta}_v}\equiv 0.
\end{align*}

$(2)$. $w_{\widetilde{\vartheta}}(1+|\nabla \phi|)|f|$.
Due to $|\nabla \phi|\lesssim \|w_{\widetilde{\vartheta}}f\|_\infty$, the contribution of
$w_{\widetilde{\vartheta}}(1+|\nabla \phi|)|f|$ is $t \sup_{0 \leq s \leq t} \|w_{4 \vartheta}f\|_\infty$.

$(3)$. $w_{\widetilde{\vartheta}}|\nabla \phi(s)|\mu^{\frac{1}{4}}(v(s))$.
For $3 < p < 6-2 \varpi$, there exists an $\alpha >1$, such that $\frac{1}{1+\delta}= \frac{1}{p}+ \frac{1}{\alpha}$. By H\"{o}lder's inequality for $v$ and then $x$, and considering $\mathrm{d}x\mathrm{d}v=\mathrm{d}x(s)\mathrm{d}v(s)$ as well, we have
\begin{align*}
\|w_{\widetilde{\vartheta}}|\nabla \phi(s)|\mu^{\frac{1}{4}}(v(s))\|_{L^3_x L^{1+\delta}_v} \lesssim & \|\nabla \phi (s, x(s))\mu^{\frac{1}{16}}(v(s))\|_{L^p_{x, v}}\|\mu^{\frac{1}{16}}(v(s))\|_{L^\alpha_v}\\
\lesssim & \|\nabla \phi(s, x)\|_{L^p_x}\leq \|\phi(s, x)\|_{W^{2, p}_x} \lesssim \|w_{\widetilde{\vartheta}}f(s)\|_{L^p_{x, v}}.
\end{align*}
So the contribution of $w_{\widetilde{\vartheta}}|\nabla \phi|\mu^{\frac{1}{4}}$ is $\int_0^t \|w_{\widetilde{\vartheta}}f(s)\|_{L^p_{x, v}} \mathrm{d}s$.

$(4)$. $w_{\widetilde{\vartheta}}\partial_x f(s, x(s), v(s))$.
Similar to the method used in \cite{Kim}, we can obtain without difficulty that the contribution is $\int_0^t \|w_{2\widetilde{\vartheta}}\alpha^\beta_{f, \varepsilon}\partial_x f(s)\|_p \mathrm{d}s$.

$(5)$. $w_{\widetilde{\vartheta}}(\partial_v (Kf)+ \partial_v (\Gamma (f, f)))$.
First, we deal with the term $w_{\widetilde{\vartheta}}K_2^\chi \nabla_v f$.
It holds
\begin{align*}
w_{\widetilde{\vartheta}}K_2^\chi \nabla_v f=\int_{\mathbb{R}^3}w_{\widetilde{\vartheta}}(v(s))\mathbf{k}_2^\chi (v(s), u)\nabla_v f(u)\mathrm{d}u,
\end{align*}
and recalling (\ref{k2x}) for $s_0 =\min \{s_1, s_2\}$ gives
\begin{align*}
|\mathbf{k}_2^\chi (v,u)|\lesssim \frac{\exp\left( -\frac{s_0}{8}|u-v|^2- \frac{s_0}{8}\frac{(|v|^2-|u|^2)^2}{|v-u|^2}\right)}{|v-u|(1+|v|+|u|)^{1-\gamma}} \leq \frac{\exp\left( -\frac{s_0}{8}|u-v|^2\right)}{|v-u|}.
\end{align*}

We need to divide $\mathbb{R}_v^3 \times \mathbb{R}_u^3$ into the following three cases to discuss.

\emph{Case 1}. For $C_1 := \frac{2 \delta_1}{\Lambda_1^{\frac{1}{\rho}}}\Gamma(\frac{1}{\rho}+1)$, if $|v|\geq 2 N + C_1$ and $|u|\leq N$, we know that $|v(s)|\geq 2N$ and thus $|v(s)-u|\geq \frac{|v(s)|}{2}$. Then we have
\begin{align*}
w_{\widetilde{\vartheta}}(v(s))e^{-\frac{s_0}{8}|v(s)-u|^2}\leq & \exp\left\{ \widetilde{\vartheta}|v(s)|^2-\frac{s_0}{16}|v(s)-u|^2-\frac{s_0}{64}|v(s)|^2\right\}\\
\lesssim & \exp\left\{ -\frac{s_0}{16}|v(s)-u|^2 - \frac{s_0}{256}|v(s)|^2 - \frac{s_0}{256}|v(s)|^2\right\}.
\end{align*}
Hence
\begin{align*}
\int_{\mathbb{R}^3}w_{\widetilde{\vartheta}}(v(s))\mathbf{k}_2^\chi (v(s), u)\nabla_v f(u)\mathrm{d}u \lesssim e^{-2C_3 |v(s)|^2}\int_{\mathbb{R}^3} \frac{e^{-C_2 |v(s)-u|^2}}{|v(s)-u|}\nabla_v f(s, x(s),u)\mathrm{d}u,
\end{align*}
where $C_2 = \frac{s_0}{16}$ and $C_3 = \frac{s_0}{256}$. Therefore, by H\"{o}lder's inequality for $\frac{1}{1+\delta}= \frac{1}{3}+ \frac{1}{\frac{3(1+\delta)}{2-\delta}}$, it holds
\begin{align*}
\|w_{\widetilde{\vartheta}}K_2^\chi \nabla_v f\|_{L^3_x L^{1+\delta}_v} \lesssim & \left\|e^{-C_3 |v(s)|^2}\right\|_{L_v^{\frac{3(1+\delta)}{2-\delta}}} \left\|\int_{\mathbb{R}^3_u} \frac{e^{-C_3 |v(s)|^2} e^{-C_2 |v(s)-u|^2}}{|v(s)-u|}\nabla_v f(s, x(s),u)\mathrm{d}u\right\|_{L^3_{x, v}}\\
\lesssim & \left\|\int_{\mathbb{R}^3_u} \frac{e^{-C_3 |v|^2} e^{-C_2 |v-u|^2}}{|v-u|}\nabla_v f(s, x,u)\mathrm{d}u\right\|_{L^3_{x, v}},
\end{align*}
where we have used $\mathrm{d}x\mathrm{d}v=\mathrm{d}x(s)\mathrm{d}v(s)$.
Considering $\frac{1}{3}= \frac{1}{\frac{3(1+\delta)}{1-2\delta}}+ \frac{1}{\frac{1+\delta}{\delta}}$, it holds
\begin{align*}
&\left\|\int_{\mathbb{R}^3_u} \frac{e^{-C_3 |v|^2} e^{-C_2 |v-u|^2}}{|v-u|}\nabla_v f(s, x,u)\mathrm{d}u\right\|_{L^3_{v}}\\
\leq & \left\|e^{-C_3 |v|^2}\right\|_{L_v^{\frac{1+\delta}{\delta}}} \left\|\int_{\mathbb{R}^3_u} \frac{ e^{-C_2 |v-u|^2}}{|v-u|}\nabla_v f(s, x,u)\mathrm{d}u\right\|_{L_v^{\frac{3(1+\delta)}{1-2\delta}}}\\
\lesssim & \left\|\frac{e^{-C_2 |v|^2}}{|v|}\ast \nabla_v f(s, x,v)\right\|_{L_v^{\frac{3(1+\delta)}{1-2\delta}}}\\
\lesssim & \left\|\frac{e^{-C_2 |v|^2}}{|v|}\right\|_{L^3(|v|\geq 2N+C_1)} \|\nabla_v f(s, x, v)\|_{L_v^{1+\delta}} \,\,\left(\mathrm{since}\,\, \,1+ \frac{1}{\frac{3(1+\delta)}{1-2\delta}}=\frac{1}{3}+\frac{1}{1+\delta}\right)\\
\lesssim & \|w_{\widetilde{\vartheta}}\nabla_v f(s, x, v)\|_{L_v^{1+\delta}}.
\end{align*}
So in this case, $\|w_{\widetilde{\vartheta}}K_2^\chi \nabla_v f\|_{L^3_x L^{1+\delta}_v} \lesssim \|w_{\widetilde{\vartheta}} \nabla_v f(s)\|_{L^3_x L^{1+\delta}_v}$.

\emph{Case 2}. If $|v|\leq 2N+C_1$, then $|v(s)|\leq |v|+C_1 \leq 2N+2C_1$,  $w_{\widetilde{\vartheta}}(v(s))\lesssim 1$. As $|v|$ is bounded, by H\"{o}lder's inequality, we have $\|\cdot\|_{L^{1+\delta}_v}\lesssim \|\cdot\|_{L^{3}_v}$. So we can use $\mathrm{d}x\mathrm{d}v=\mathrm{d}x(s)\mathrm{d}v(s)$ again. We take some $\alpha$ such that
\begin{align*}
1+\frac{1}{\frac{\alpha (1+\delta)}{1+\delta-\alpha\delta}}=\frac{1}{\alpha}+ \frac{1}{1+\delta}, \,\,\, 0<3-\alpha \ll 1, \,\,\,\frac{\alpha (1+\delta)}{1+\delta-\alpha\delta} >3.
\end{align*}
Note that
\begin{align*}
\left\|\int_{\mathbb{R}_u^3} \mathbf{k}_2^\chi (v, u)\nabla_v f(u)\right\|_{L_v^{\frac{\alpha (1+\delta)}{1+\delta-\alpha\delta}}} \leq &\left\|\frac{e^{-C|v|^2}}{|v|}\ast \nabla_v f(v)\right\|_{L_v^{\frac{\alpha (1+\delta)}{1+\delta-\alpha\delta}}}\\
\leq & \left\|\frac{e^{-C|v|^2}}{|v|}\right\|_{L_v^\alpha} \left\|\nabla_v f\right\|_{L^{1+\delta}_v}\\
\lesssim & \left\|w_{\widetilde{\vartheta}}\nabla_v f(s)\right\|_{L^{1+\delta}_v}.
\end{align*}
So in this case, we also have $\|w_{\widetilde{\vartheta}}K_2^\chi \nabla_v f\|_{L^3_x L^{1+\delta}_v} \lesssim \|w_{\widetilde{\vartheta}} \nabla_v f(s)\|_{L^3_x L^{1+\delta}_v}$.

\emph{Case 3}. $|v|\geq 2N+C_1 \geq 2N$ and $|u|\geq N$.
We go through a similar procedure to the hard sphere case in \cite{Kim}. In soft potential case, we have
\begin{align*}
&w_{\widetilde{\vartheta}}(v(s))\mathbf{k}_2^\chi(v(s), u)\nabla_v f(s, x(s), u)\\
=& \frac{w_{2\widetilde{\vartheta}}(v(s))}{w_{2\widetilde{\vartheta}}(u)} \frac{\mathbf{k}_2^\chi(v(s), u)}{\alpha^\beta_{f, \varepsilon}(u)}\frac{w_{2\widetilde{\vartheta}}(u)\alpha^\beta_{f, \varepsilon}(u)\nabla_v f(s, x(s), u)}{w_{\widetilde{\vartheta}}(v(s))^{1-l}w_{\widetilde{\vartheta}}(v(s))^{l}},
\end{align*}
where $0<l \ll 1$. Then we are able to find some $\widetilde{\rho}< \frac{s_0}{8}-\widetilde{\vartheta}$ such that
\begin{align*}
\mathbf{k}_2^\chi(v(s), u)\frac{w_{2\widetilde{\vartheta}}(v(s))}{w_{2\widetilde{\vartheta}}(u)} \lesssim \mathbf{k}_{\widetilde{\rho}}(v(s), u)\leq \frac{e^{-\widetilde{\rho}|v(s)-u|^2}}{|v(s)-u|}.
\end{align*}
Omitting the details for brevity, we finally obtain that $\|w_{\widetilde{\vartheta}}K_2^\chi \nabla_v f\|_{L^3_x L^{1+\delta}_v} \lesssim \big\|w_{2\widetilde{\vartheta}}\alpha^\beta_{f, \varepsilon}\nabla_v f(s)\big\|_p$.

Hence the contribution of $w_{\widetilde{\vartheta}}K_2^\chi \nabla_v f$ is
\begin{align*}
\int_0^t \|w_{\widetilde{\vartheta}} \nabla_v f(s)\|_{L^3_x L^{1+\delta}_v}+\big\|w_{2\widetilde{\vartheta}}\alpha^\beta_{f, \varepsilon}\nabla_v f(s)\big\|_p.
\end{align*}

Next, we deal with the term $w_{\widetilde{\vartheta}}(\Gamma^\chi_{\mathrm{gain}}(\nabla_v f, f)+\Gamma^\chi_{\mathrm{gain}}(f, \nabla_v f))$.
Note that
\begin{align*}
&\Gamma^\chi_{\mathrm{gain}}(\nabla_v f, f)+\Gamma^\chi_{\mathrm{gain}}(f, \nabla_v f)\\
\lesssim & \Big\|w_{\frac{4\widetilde{\vartheta}}{s_0}}f\Big\|_\infty \int |u-v|^\gamma \chi b_0(\theta)w_{\frac{4\widetilde{\vartheta}}{s_0}}^{-1}(u)
(w_{\frac{4\widetilde{\vartheta}}{s_0}}^{-1}(v')\nabla_v f(u')+ w_{\frac{4\widetilde{\vartheta}}{s_0}}^{-1}(u')\nabla_v f(v'))\mathrm{d}u\mathrm{d}\omega\\
\lesssim &\Big\|w_{\frac{4\widetilde{\vartheta}}{s_0}}f\Big\|_\infty \int \mathbf{k}_{2\widetilde{\vartheta}}(v, u)\nabla_v f(u)\mathrm{d}u,
\end{align*}
where
\begin{align*}
\mathbf{k}_{2\widetilde{\vartheta}}(v, u) \leq \frac{\exp\left\{-2\widetilde{\vartheta}|u-v|^2- 2\widetilde{\vartheta}\frac{(|v|^2-|u|^2)^2}{|v-u|^2}\right\}}{|v-u|}
\end{align*}
and there exists a $\widetilde{\rho}< 2 \widetilde{\vartheta}- \widetilde{\vartheta}$, such that
\begin{align*}
\mathbf{k}_{2\widetilde{\vartheta}}(v(s), u)\frac{w_{2\widetilde{\vartheta}}(v(s))}{w_{2\widetilde{\vartheta}}(u)} \lesssim \mathbf{k}_{\widetilde{\rho}}(v(s), u).
\end{align*}
Then we can take the same procedure for $K_2^\chi \nabla_v f$. So the contribution of  $w_{\widetilde{\vartheta}}(\Gamma^\chi_{\mathrm{gain}}(\nabla_v f, f)+\Gamma^\chi_{\mathrm{gain}}(f, \nabla_v f))$ is
\begin{align*}
\sup_{0 \leq s \leq t}\Big\|w_{\frac{4\widetilde{\vartheta}}{s_0}}f\Big\|_\infty \int_0^t \|w_{\widetilde{\vartheta}} \nabla_v f(s)\|_{L^3_x L^{1+\delta}_v}+\big\|w_{2\widetilde{\vartheta}}\alpha^\beta_{f, \varepsilon}\nabla_v f(s)\big\|_p.
\end{align*}

We point out that there is no essential difficulty or new method needed to deal with the other terms of $w_{\widetilde{\vartheta}}(\partial_v (Kf)+ \partial_v (\Gamma (f, f)))$. Actually, by using the techniques of the proofs in Proposition \ref{prop 3}, we can successfully control the following terms, so we shall only state the results and omit the details.

The contribution of $w_{\widetilde{\vartheta}}(K_2^{1-\chi}\nabla_v f+ \Gamma_{\mathrm{gain}}^{1-\chi}(\nabla_v f, f)+\Gamma_{\mathrm{gain}}^{1-\chi}(f, \nabla_v f))$ is
$(1+\sup_{0 \leq s \leq t}\|w_{\widetilde{\vartheta}}f\|_\infty ) \int_0^t \|w_{\widetilde{\vartheta}}\alpha^\beta_{f, \varepsilon}\nabla_v f(s)\|_p$.
Note that the term $O(\varepsilon)$ is also absorbed into initial data as $\varepsilon$ is sufficiently small.

The contribution of $w_{\widetilde{\vartheta}}(K_1 \nabla_v f +\Gamma_{\mathrm{loss}}(\nabla_v f, f))$ is $(1+\sup_{0 \leq s \leq t}\|w_{2\widetilde{\vartheta}}f\|_\infty ) \int_0^t \|w_{\widetilde{\vartheta}}\alpha^\beta_{f, \varepsilon}\nabla_v f(s)\|_p$.

The contribution of $w_{\widetilde{\vartheta}}(\Gamma_{\mathrm{loss}}(f, \nabla_v f))$ is $\sup_{0 \leq s \leq t}\|w_{\widetilde{\vartheta}}f\|_\infty \int_0^t \|w_{2\widetilde{\vartheta}}\alpha^\beta_{f, \varepsilon}\nabla_v f(s)\|_p$.

The contribution of $w_{\widetilde{\vartheta}}(K_v f +\Gamma_v(f, f))$ is $t(1+\sup_{0 \leq s \leq t}\|w_{\frac{4\widetilde{\vartheta}}{s_0}}f\|_\infty ) \|w_{2\widetilde{\vartheta}}f\|_\infty$.

Therefore, in summary, we have
\begin{align*}
\|w_{\widetilde{\vartheta}}\nabla_v f(t)\|_{L^3_x L^{1+\delta}_v} \lesssim & \|w_{4\vartheta}\nabla_v f_0\|_{L^3_{x, v}}\\
& + t\left(1+\sup_{0 \leq s \leq t}\|w_{\frac{4\widetilde{\vartheta}}{s_0}}f\|_\infty \right) \sup_{0 \leq s\leq t}\left(\|w_{4\vartheta}f\|_\infty+ \|w_{\widetilde{\vartheta}}f\|_p + \|w_{2\widetilde{\vartheta}}\alpha^\beta_{f, \varepsilon}\nabla_{x, v} f\|_p\right)\\
&+ \left(1+\sup_{0 \leq s \leq t}\|w_{\frac{4\widetilde{\vartheta}}{s_0}}f\|_\infty \right) \int_0^t \|w_{\widetilde{\vartheta}}\nabla_v f(s)\|_{L^3_x L^{1+\delta}_v}.
\end{align*}
Then by Proposition \ref{prop 3} and Gronwall's inequality, we finally obtain that
$\|w_{\widetilde{\vartheta}}\nabla_v f(t)\|_{L^3_x L^{1+\delta}_v} \lesssim_t 1$.
\end{proof}

Once we have Proposition \ref{prop 4}, we can obtain the following $L^{1+}$ stability for the solutions to the problem \eqref{perturb eqn}--\eqref{incoming}.

\begin{proposition}\label{prop 5}
Suppose that $(f,\phi_f)$ and $(g,\phi_g)$ solve the problem \eqref{perturb eqn}--\eqref{incoming}, with $f|_{\gamma_-}=h_1, g|_{\gamma_-}=h_2 $, and satisfy all the conditions in Propositions \emph{\ref{prop 3}} and \emph{\ref{prop 4}}. We also assume that $w_{\widetilde{\vartheta}}(h_1-h_2) \in L^{1+\delta} ((0,T)\times \gamma_-)$. Then
\begin{align*}
&\|w_{\widetilde{\vartheta}}(f-g)(t)\|^{1+\delta}_{L^{1+\delta}(\Omega \times \mathbb{R}^3)} + \int_0^t \|w_{\widetilde{\vartheta}}(f-g)\|^{1+\delta}_{L^{1+\delta}(\gamma_+)} \mathrm{d}s \\
& \lesssim_t \|w_{\widetilde{\vartheta}}(f-g)(0)\|^{1+\delta}_{L^{1+\delta}(\Omega \times \mathbb{R}^3)}+ \|w_{\widetilde{\vartheta}}(h_1-h_2)\|^{1+\delta}_{L^{1+\delta}((0,T)\times \gamma_-)}.
\end{align*}
\end{proposition}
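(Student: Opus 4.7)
The plan is to subtract the two copies of \eqref{perturb eqn} for $f$ and $g$, set $F:=f-g$ and $\Phi:=\phi_f-\phi_g$, and mimic the $L^p$ energy estimate of Proposition \ref{prop 3} with the exponent $p=1+\delta$, treating the one genuinely new source term via Proposition \ref{prop 4}. The difference equation reads
\begin{align*}
\partial_t F + v\cdot\nabla_x F - \nabla_x\phi_f\cdot\nabla_v F + \tfrac{v}{2}\cdot\nabla_x\phi_f\,F + LF = \nabla_x\Phi\cdot\nabla_v g - \tfrac{v}{2}\cdot\nabla_x\Phi\,g + \Gamma(f,F) + \Gamma(F,g) - v\cdot\nabla_x\Phi\sqrt{\mu},
\end{align*}
coupled with $-\Delta\Phi=\int F\sqrt{\mu}\,dv$, $\partial_n\Phi=0$, and $F|_{\gamma_-}=h_1-h_2$. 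Applying $w_{\widetilde{\vartheta}}$ exactly as in Proposition \ref{prop 3} produces the weighted equation for $w_{\widetilde{\vartheta}}F$ with the same nonnegative damping $\widetilde{\nu}_f$ (built from $\phi_f$). Multiplying by $|w_{\widetilde{\vartheta}}F|^{\delta-1}(w_{\widetilde{\vartheta}}F)$, integrating over $(0,t)\times\Omega\times\mathbb{R}^3$ and discarding the sign-definite $\widetilde{\nu}_f$-term yields
\begin{align*}
\|w_{\widetilde{\vartheta}}F(t)\|_{1+\delta}^{1+\delta}+\int_0^t |w_{\widetilde{\vartheta}}F|_{1+\delta,+}^{1+\delta}\,ds\lesssim \|w_{\widetilde{\vartheta}}F(0)\|_{1+\delta}^{1+\delta}+\int_0^t |w_{\widetilde{\vartheta}}(h_1-h_2)|_{1+\delta,-}^{1+\delta}\,ds+\mathcal{I},
\end{align*}
where $\mathcal{I}$ collects the bulk integrals of $|w_{\widetilde{\vartheta}}F|^\delta\,w_{\widetilde{\vartheta}}|\mathrm{RHS}|$.

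Next, I would control the ``standard'' contributions to $\mathcal{I}$ by replaying Step~1 of Proposition \ref{prop 3}. The pieces coming from $KF$, $\Gamma(f,F)+\Gamma(F,g)$, $v\cdot\nabla_x\Phi\sqrt{\mu}$ and $\tfrac{v}{2}\cdot\nabla_x\Phi\,g$ are all absorbed into $\int_0^t\|w_{\widetilde{\vartheta}}F\|_{1+\delta}^{1+\delta}\,ds$ by the smallness of $\|w_{4\vartheta/s_0}f\|_\infty$ and $\|w_{4\vartheta/s_0}g\|_\infty$, Young's inequality, and the elliptic bound
\begin{align*}
\|\Phi\|_{W^{2,1+\delta}(\Omega)}\lesssim \Big\|\int F\sqrt{\mu}\,dv\Big\|_{L^{1+\delta}_x}\lesssim \|w_{\widetilde{\vartheta}}F\|_{L^{1+\delta}(\Omega\times\mathbb{R}^3)},
\end{align*}
where the last inequality uses $\sqrt{\mu}/w_{\widetilde{\vartheta}}\in L^{(1+\delta)/\delta}_v$.

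The main obstacle is the genuinely new term $\nabla_x\Phi\cdot\nabla_v g$, which has no analogue in Proposition \ref{prop 3} and for which no $L^\infty$ bound on $\nabla_v g$ is available. The plan is to use H\"older in $(x,v)$ with exponents $\tfrac{1+\delta}{\delta}$ and $1+\delta$, then H\"older in $x$ with exponents $\tfrac{3}{2-\delta}$ and $\tfrac{3}{1+\delta}$, so that the second factor matches Proposition \ref{prop 4} and the first factor matches the critical Sobolev embedding $W^{2,1+\delta}(\Omega)\hookrightarrow W^{1,3(1+\delta)/(2-\delta)}(\Omega)$:
\begin{align*}
\int|w_{\widetilde{\vartheta}}F|^\delta\,w_{\widetilde{\vartheta}}\,|\nabla_x\Phi\cdot\nabla_v g|\,dx\,dv\leq \|w_{\widetilde{\vartheta}}F\|_{1+\delta}^\delta\,\|\nabla_x\Phi\|_{L^{3(1+\delta)/(2-\delta)}_x}\,\bigl\|\,\|w_{\widetilde{\vartheta}}\nabla_v g\|_{L^{1+\delta}_v}\,\bigr\|_{L^3_x}.
\end{align*}
Proposition \ref{prop 4} bounds the last factor by some $C(t)$, while the elliptic/Sobolev chain controls $\|\nabla_x\Phi\|_{L^{3(1+\delta)/(2-\delta)}_x}$ by $\|w_{\widetilde{\vartheta}}F\|_{1+\delta}$; Young's inequality then produces a contribution $\lesssim_t \|w_{\widetilde{\vartheta}}F\|_{1+\delta}^{1+\delta}$.

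Collecting all estimates, the boundary term at $\gamma_+$ is absorbed on the left, the incoming boundary contribution equals $\int_0^t|w_{\widetilde{\vartheta}}(h_1-h_2)|_{1+\delta,-}^{1+\delta}\,ds$ directly, and Gronwall's inequality on the finite interval $[0,T]$ closes the argument. The central difficulty is that $\nabla_v g$ lies only in the mixed space $L^3_xL^{1+\delta}_v$; it is precisely this exponent pair that is dictated by the critical Sobolev embedding for the Poisson equation $-\Delta\Phi=\int F\sqrt{\mu}\,dv$, which is why $L^{1+\delta}$ (rather than $L^1$ or $L^2$) is the natural stability space here and why Proposition \ref{prop 4} was proved in exactly that form.
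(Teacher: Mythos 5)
Your proposal is correct and follows essentially the same route as the paper: the same difference equation with damping $\widetilde{\nu}_f\ge 0$ discarded, the same absorption of the $K$, $\Gamma$, and $\nabla_x\Phi$-source terms as in Proposition \ref{prop 3}, and for the key term $\nabla_x\Phi\cdot\nabla_v g$ the identical H\"older splitting $1=\frac{2-\delta}{3(1+\delta)}+\frac{1}{3}+\frac{\delta}{1+\delta}$ paired with the critical Sobolev embedding $W^{1,1+\delta}(\Omega)\hookrightarrow L^{3(1+\delta)/(2-\delta)}(\Omega)$ and the $L^3_xL^{1+\delta}_v$ bound from Proposition \ref{prop 4}. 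No substantive differences to report.
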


\begin{proof}
The weighted linearized equation of \eqref{perturb eqn} for the solution $f$ and $g$ are:
\begin{align*}
\{\partial_t + v \cdot \nabla_x - \nabla_x \phi_f \cdot \nabla_v + \widetilde{\nu}_f\}(w_{\widetilde{\vartheta}}f)=& w_{\widetilde{\vartheta}}(Kf+ \Gamma(f, f)- v \cdot \nabla_x \phi_f \sqrt{\mu}),\\
\{\partial_t + v \cdot \nabla_x - \nabla_x \phi_g \cdot \nabla_v + \widetilde{\nu}_g\}(w_{\widetilde{\vartheta}}g)=& w_{\widetilde{\vartheta}}(Kg+ \Gamma(g, g)- v \cdot \nabla_x \phi_g \sqrt{\mu}).
\end{align*}
Taking the difference between the both equations, we have
\begin{align}\label{f-g}
&\{\partial_t + v \cdot \nabla_x - \nabla_x \phi_f \cdot \nabla_v + \widetilde{\nu}_f\}(w_{\widetilde{\vartheta}}(f-g))\nonumber\\
=& w_{\widetilde{\vartheta}}(K[f-g]+\Gamma(f, f)-\Gamma(g, g)-v \cdot \nabla \phi_{f-g}\sqrt{\mu})+ \frac{v}{2} \cdot \nabla \phi_{g-f}w_{\widetilde{\vartheta}}g + w_{\widetilde{\vartheta}}\nabla \phi_{f-g} \cdot\nabla_v g.
\end{align}
With the above equation for $w_{\widetilde{\vartheta}}(f-g)$, we can deduce the corresponding inequality for $|w_{\widetilde{\vartheta}}(f-g)|^{1+\delta}$ by taking integration over $(0, t)\times \Omega \times \mathbb{R}^3$ and integrating by parts. It holds
\begin{align*}
&\|w_{\widetilde{\vartheta}}(f-g)(t)\|_{1+\delta}^{1+\delta}+ \int_0^t |w_{\widetilde{\vartheta}}(f-g)|_{1+\delta, +}^{1+\delta}+ \int_0^t \widetilde{\nu}_f |w_{\widetilde{\vartheta}}(f-g)|^{1+\delta}\mathrm{d}x\mathrm{d}v\mathrm{d}s\\
\leq & \|w_{\widetilde{\vartheta}}(f-g)(0)\|_{1+\delta}^{1+\delta}+ \int_0^t |w_{\widetilde{\vartheta}}(f-g)|_{1+\delta, -}^{1+\delta} +\int_0^t |w_{\widetilde{\vartheta}}(f-g)|^{\delta}\cdot |\mathrm{RHS} \,\,\mathrm{of}\,\,\eqref{f-g} |\mathrm{d}x\mathrm{d}v\mathrm{d}s.
\end{align*}
Note again that $\widetilde{\nu}_f \geq 0$, hence the above inequality can be immediately simplified, namely,
\begin{align*}
&\|w_{\widetilde{\vartheta}}(f-g)(t)\|_{1+\delta}^{1+\delta}+ \int_0^t |w_{\widetilde{\vartheta}}(f-g)|_{1+\delta, +}^{1+\delta}\\
\leq & \|w_{\widetilde{\vartheta}}(f-g)(0)\|_{1+\delta}^{1+\delta}+ \int_0^t |w_{\widetilde{\vartheta}}(f-g)|_{1+\delta, -}^{1+\delta} +\int_0^t |w_{\widetilde{\vartheta}}(f-g)|^{\delta}\cdot |\mathrm{RHS} \,\,\mathrm{of}\,\,\eqref{f-g} |\mathrm{d}x\mathrm{d}v\mathrm{d}s.
\end{align*}

We shall deal with the contribution of the right-hand side of \eqref{f-g} term by term.

(1). $w_{\widetilde{\vartheta}}(K[f-g]+\Gamma(f, f)-\Gamma(g, g)-v \cdot \nabla \phi_{f-g}\sqrt{\mu})$.
For the term $\Gamma_{\mathrm{loss}}(f-g, g)$, we have
\begin{align*}
w_{\widetilde{\vartheta}}(v)\Gamma_{\mathrm{loss}}(f-g, g)
=& w_{\widetilde{\vartheta}}(v)\int_{\mathbb{R}^3 \times \mathbb{S}^2}|u-v|^\gamma b_0(\theta)\sqrt{\mu(u)}(f-g)(u)g(v)\mathrm{d}u \mathrm{d}\omega\\
\lesssim & \Big\|w_{\frac{4 \vartheta}{s_0}}g\Big\|_\infty \int_{\mathbb{R}^3}|u-v|^\gamma \mu^{\frac{1}{2}}(u)w_{\vartheta}^{-1}(v)|w_{\widetilde{\vartheta}}(f-g)(u)|\mathrm{d}u.
\end{align*}
We notice that
\begin{align*}
\int_{\mathbb{R}^3}|u-v|^\gamma \mu^{\frac{1}{2}}(u)w_{\vartheta}^{-1}(v) \mathrm{d}v \lesssim & \langle u\rangle^\gamma \lesssim 1,\\
\int_{\mathbb{R}^3}|u-v|^\gamma \mu^{\frac{1}{2}}(u)w_{\vartheta}^{-1}(v) \mathrm{d}u \lesssim & \langle v\rangle^\gamma \lesssim 1.
\end{align*}
Similar to the deduction in Proposition \ref{prop 3}, we obtain that the contribution of $\Gamma_{\mathrm{loss}}(f-g, g)$ is
\begin{align*}
\sup_{0 \leq s \leq t}\Big\|w_{\frac{4 \vartheta}{s_0}}g\Big\|_\infty \int_0^t \|w_{\widetilde{\vartheta}}(f-g)\|_{1+\delta}^{1+\delta}.
\end{align*}
Meanwhile, the contribution of the other terms therein also shares the same deduction with Proposition \ref{prop 3}, thus the total contribution of $w_{\widetilde{\vartheta}}(K[f-g]+\Gamma(f, f)-\Gamma(g, g)-v \cdot \nabla \phi_{f-g}\sqrt{\mu})$ is
\begin{align*}
\left(1+\sup_{0 \leq s \leq t}\Big\|w_{\frac{4 \vartheta}{s_0}}f\Big\|_\infty+ \sup_{0 \leq s \leq t}\Big\|w_{\frac{4 \vartheta}{s_0}}g\Big\|_\infty\right) \int_0^t \|w_{\widetilde{\vartheta}}(f-g)\|_{1+\delta}^{1+\delta}.
\end{align*}

(2). $\frac{v}{2} \cdot \nabla \phi_{g-f}w_{\widetilde{\vartheta}}g$.
We have
\begin{align*}
\|\frac{v}{2} \cdot \nabla \phi_{g-f}w_{\widetilde{\vartheta}}g\|_{L^{1+\delta}_{x, v}}
\lesssim & \Big\|w_{\frac{4 \vartheta}{s_0}}g\Big\|_\infty \left(\int_x |\nabla \phi_{f-g}|^{1+\delta} \mathrm{d}x \int_v w_{\widetilde{\vartheta}}^{-(1+\delta)}(v)\mathrm{d}v\right)^{\frac{1}{1+\delta}}\\
\lesssim & \Big\|w_{\frac{4 \vartheta}{s_0}}g\Big\|_\infty \|\nabla \phi_{f-g}\|_{L^{1+\delta}_x}.
\end{align*}
Due to
\begin{align*}
\|\nabla \phi_{f-g}\|_{L^{1+\delta}_x} \leq \|\phi_{f-g}\|_{W^{2, 1+\delta}_x} \lesssim \left\|\int (f-g)\sqrt{\mu}\mathrm{d}v\right\|_{L^{1+\delta}_x} \lesssim \|w_{\widetilde{\vartheta}}(f-g)\|_{L^{1+\delta}_{x, v}},
\end{align*}
the contribution of $\frac{v}{2} \cdot \nabla \phi_{g-f}w_{\widetilde{\vartheta}}g$ is
\begin{align*}
\sup_{0 \leq s \leq t}\Big\|w_{\frac{4 \vartheta}{s_0}}g\Big\|_\infty \int_0^t \|w_{\widetilde{\vartheta}}(f-g)\|_{1+\delta}^{1+\delta}.
\end{align*}

(3). $w_{\widetilde{\vartheta}}\nabla \phi_{f-g} \cdot\nabla_v g$.
Inspired by \cite{Kim}, considering
\begin{align*}
1= \frac{1}{\frac{3(1+\delta)}{2-\delta}}+ \frac{1}{3}+\frac{1}{\frac{1+\delta}{\delta}}, \,\, 1= \frac{1}{1+\delta}+\frac{1}{\frac{1+\delta}{\delta}},
\end{align*}
and using H\"{o}lder's inequality, we deduce that
\begin{align*}
&\int_0^t \int_{\Omega \times \mathbb{R}^3}|w_{\widetilde{\vartheta}}(f-g)|^\delta |\nabla \phi_{f-g}||w_{\widetilde{\vartheta}}\nabla_v g|\\
\lesssim & \int_0^t \|\nabla \phi_{f- g}\|_{L_x^{\frac{3(1+\delta)}{2-\delta}}} \|w_{\widetilde{\vartheta}}\nabla_v g\|_{L_x^3 L_v^{1+\delta}} \|w_{\widetilde{\vartheta}}(f-g)\|_{L_{x, v}^{1+\delta}}^\delta.
\end{align*}
Thanks to the Gagliardo-Nirenberg-Sobolev inequality, it holds
\begin{align*}
\|\nabla \phi_{f- g}\|_{L_x^{\frac{3(1+\delta)}{2-\delta}}}\lesssim \|\nabla \phi_{f- g}\|_{W^{1, 1+\delta}_x} \lesssim \|w_{\widetilde{\vartheta}}(f-g)\|_{L^{1+\delta}_{x, v}}.
\end{align*}
Hence the contribution of $w_{\widetilde{\vartheta}}\nabla \phi_{f-g} \cdot\nabla_v g$ is
\begin{align*}
\sup_{0 \leq s \leq t}\|w_{\widetilde{\vartheta}}\nabla_v g\|_{L_x^3 L_v^{1+\delta}} \int_0^t \|w_{\widetilde{\vartheta}}(f-g)\|_{1+\delta}^{1+\delta}.
\end{align*}

In summary, we have
\begin{align*}
&\|w_{\widetilde{\vartheta}}(f-g)(t)\|_{1+\delta}^{1+\delta}+\int_0^t |w_{\widetilde{\vartheta}}(f-g)|_{1+\delta, +}^{1+\delta}\\
\lesssim & \|w_{\widetilde{\vartheta}}(f-g)(0)\|_{1+\delta}^{1+\delta}+ \int_0^t |w_{\widetilde{\vartheta}}(h_1-h_2)|_{1+\delta, -}^{1+\delta} \\
& + \left(1+\sup_{0 \leq s \leq t}\Big\|w_{\frac{4 \vartheta}{s_0}}f\Big\|_\infty+ \sup_{0 \leq s \leq t}\Big\|w_{\frac{4 \vartheta}{s_0}}g\Big\|_\infty + \sup_{0 \leq s \leq t}\|w_{\widetilde{\vartheta}}\nabla_v g\|_{L_x^3 L_v^{1+\delta}}\right) \int_0^t \|w_{\widetilde{\vartheta}}(f-g)\|_{1+\delta}^{1+\delta}.
\end{align*}
By using Gronwall's inequality, we obtain the desired result.
\end{proof}

\section{Local Existence}
\begin{theorem}
Let $0<\vartheta \ll 1$. Assume that for sufficiently small $M, \delta^* >0, \vartheta\theta > \frac{8}{s_0}M$, the initial datum $F_0=\mu + \sqrt{\mu}f_0 \geq 0$ and the boundary datum $F|_{\gamma_-}=\mu +\sqrt{\mu} g \geq 0$ satisfy $\|w_{\widetilde{\vartheta}}f_0\|_{\infty}+ \sup_{0\leq s\leq \infty}\|w_{\widetilde{\vartheta}}g(s)\|_{\infty}\leq M/2$, and
\begin{align*}
\|\nabla^2 \phi(0)\|_{\infty}+ \|w_\vartheta f(0)\|_p+ \big\|w_\vartheta \alpha^\beta_{f_0, \varepsilon}\nabla_{x, v}f(0)\big\|_p+ \|w_\vartheta g\|_{L^p_{s, x, v}} +\|w_\vartheta \nabla_{x, v}g\|_{L^p ((0, \infty); L_{x, v}^\infty )} \leq \delta^* M,
\end{align*}
where $(p, \beta)$ satisfies \eqref{p beta}, and $\|w_\vartheta \nabla_v f_0\|_{L^3_{x, v}}< \infty$.

Then there exists a $\widehat{T}>0$ and a unique solution $F(t,x,v)=\mu +\sqrt{\mu}f(t,x,v)\geq 0$ to the problem \eqref{perturb eqn}--\eqref{incoming} in $[0, \widehat{T}]\times \Omega \times \mathbb{R}^3$ such that
\begin{equation*}
\sup_{0\leq t \leq \widehat{T}}\|w_{\widetilde{\vartheta}}f(t)\|_{\infty} \leq M.
\end{equation*}
Moreover, for $0\leq t\leq \widehat{T}$, it holds that
\begin{align*}
\Big\|w_{\frac{s_0\widetilde{\vartheta}}{4}}\alpha^\beta_{f,\varepsilon}\nabla_{x,v}f(t)\Big\|_p< \infty,\quad \Big\|w_{\frac{s_0\widetilde{\vartheta}}{8}}\nabla_v f(t)\Big\|_{L^3_x L_v^{1+\delta}}<\infty,
\end{align*}
and
\begin{align*}
\|\nabla_{x,v}f(t)\|_\infty < \infty.
\end{align*}
Furthermore, $\|w_{\widetilde{\vartheta}}f(t)\|_{\infty}, \Big\|w_{\frac{s_0\widetilde{\vartheta}}{8}}\nabla_v f(t)\Big\|_{L^3_x L_v^{1+\delta}}$ and $\Big\|w_{\frac{s_0 \widetilde{\vartheta}}{4}}\alpha^\beta_{f,\varepsilon}\nabla_{x,v}f(t)\Big\|_p^p$ are continuous in $t$.
\end{theorem}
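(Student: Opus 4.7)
The plan is a Picard iteration on the linearized equation combined with the a priori estimates of Propositions \ref{prop 3}--\ref{prop 5}. Starting from $f^0\equiv 0$, define $f^{n+1}$ as the unique solution of the \emph{linear} problem
\begin{align*}
&\{\partial_t+v\cdot\nabla_x-\nabla_x\phi_{f^n}\cdot\nabla_v+\widetilde{\nu}_{f^n}\}(w_{\widetilde{\vartheta}}f^{n+1})= w_{\widetilde{\vartheta}}\bigl(Kf^n+\Gamma(f^n,f^n)-v\cdot\nabla_x\phi_{f^n}\sqrt{\mu}\bigr),\\
&f^{n+1}(0,x,v)=f_0(x,v),\qquad f^{n+1}|_{\gamma_-}=g.
\end{align*}
Since $\phi_{f^n}\in C^{1,1-\delta}(\overline{\Omega})$ by the last lemma of Section 2, the associated characteristic ODEs are well posed and this linear problem is solved explicitly by Duhamel along $[\partial_t+v\cdot\nabla_x-\nabla_x\phi_{f^n}\cdot\nabla_v]$-trajectories.

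\textbf{Uniform bounds.} I would first establish $\sup_{[0,\widehat{T}]}\|w_{\widetilde{\vartheta}}f^{n+1}\|_\infty\leq M$ by induction. Under $\vartheta\theta>\tfrac{8}{s_0}M$, which forces $\vartheta\theta>\delta_1$ in light of $\|\nabla\phi_{f^n}\|_\infty\lesssim\|w_{\widetilde{\vartheta}}f^n\|_\infty\leq M$, Section 1 guarantees $\widetilde{\nu}_{f^n}\geq 0$ and hence $e^{\int_t^s\widetilde{\nu}\,\mathrm{d}\tau}\leq 1$ for $s\leq t$. Duhamel then gives
\begin{align*}
|w_{\widetilde{\vartheta}}f^{n+1}(t,x,v)|\leq \|w_{\widetilde{\vartheta}}f_0\|_\infty+\sup_{s\geq 0}\|w_{\widetilde{\vartheta}}g(s)\|_\infty+\int_{\max\{t-t_b,0\}}^t w_{\widetilde{\vartheta}}|Kf^n+\Gamma(f^n,f^n)-v\cdot\nabla\phi_{f^n}\sqrt{\mu}|\,\mathrm{d}s,
\end{align*}
and Lemmas \ref{kwh}--\ref{gamma} bound the integrand by $C(M+M^2)$, so choosing $\widehat{T}$ with $M/2+C\widehat{T}(M+M^2)\leq M$ closes the induction; crucially $\widehat{T}$ depends only on $M$, not on $\delta^*$. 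With this $L^\infty$ control in hand, the hypotheses \eqref{3.-4}--\eqref{hessian phi} of Proposition \ref{prop 3} are verified for $f^n$ inductively through the Poisson bounds (controlling $\|\nabla^2\phi_{f^n}\|_\infty$ by $\|f^n\|_p+\|\alpha^\beta\nabla f^n\|_p$), possibly after further shrinking $\widehat{T}$. Propositions \ref{prop 3} and \ref{prop 4} then yield the uniform-in-$n$ bounds
\begin{align*}
\|w_{\widetilde{\vartheta}}\alpha^\beta_{f^n,\varepsilon}\nabla_{x,v}f^{n+1}(t)\|_p\lesssim 1,\qquad \|w_{\widetilde{\vartheta}}\nabla_v f^{n+1}(t)\|_{L^3_xL^{1+\delta}_v}\lesssim 1,
\end{align*}
on $[0,\widehat{T}]$. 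The pointwise requirement $\|\nabla_{x,v}f^{n+1}\|_\infty<\infty$ needed to invoke Proposition \ref{prop 3} is propagated along the iteration by a direct characteristic estimate for the linear equation satisfied by $\nabla_{x,v}f^{n+1}$, whose right-hand side involves only $\nabla_{x,v}f^n$ and the $C^1$-regular coefficients $\nabla_x\phi_{f^n}$.

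\textbf{Convergence and uniqueness.} Proposition \ref{prop 5} applied to the difference $f^{n+1}-f^n$ (which satisfies a linear equation of exactly the same form controlled by that proposition) gives
\begin{align*}
\sup_{[0,\widehat{T}]}\|w_{\widetilde{\vartheta}}(f^{n+1}-f^n)\|_{L^{1+\delta}(\Omega\times\mathbb{R}^3)}\lesssim \widehat{T}^{1/(1+\delta)}\sup_{[0,\widehat{T}]}\|w_{\widetilde{\vartheta}}(f^n-f^{n-1})\|_{L^{1+\delta}(\Omega\times\mathbb{R}^3)}.
\end{align*}
Shrinking $\widehat{T}$ once more makes this a strict contraction, so $\{f^n\}$ is Cauchy in $C([0,\widehat{T}];L^{1+\delta})$ and converges to some $f$. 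Using the uniform higher-norm bounds and the weak-star lower semicontinuity of $L^\infty$ on the separable predual $L^1$ (precisely the point flagged in the introduction), $f$ inherits all the stated $L^\infty$, $L^p$, and $L^3_xL^{1+\delta}_v$ bounds and solves the nonlinear system \eqref{perturb eqn}--\eqref{incoming}. Uniqueness within this class is an immediate consequence of Proposition \ref{prop 5} applied to two solutions sharing the same data.

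\textbf{Nonnegativity, continuity, and the main obstacle.} To secure $F=\mu+\sqrt{\mu}f\geq 0$ I would run the parallel, positivity-preserving iteration
\begin{align*}
\partial_tF^{n+1}+v\cdot\nabla_x F^{n+1}-\nabla\phi_{F^n}\cdot\nabla_v F^{n+1}+R(F^n)\,F^{n+1}=Q_{\mathrm{gain}}(F^n,F^n),\qquad F^0=\mu,
\end{align*}
with $R(F^n)=\int_{\mathbb{R}^3\times\mathbb{S}^2}|u-v|^\gamma b_0(\theta)F^n(u)\,\mathrm{d}u\mathrm{d}\omega$; the integrating-factor representation along characteristics together with the positivity of $Q_{\mathrm{gain}}$ forces $F^{n+1}\geq 0$ whenever $F_0,G\geq 0$, and since both iterations produce the same $L^{1+\delta}$ limit we obtain $F\geq 0$. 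Continuity in $t$ of $\|w_{\widetilde{\vartheta}}f(t)\|_\infty$, $\|w_{\widetilde{\vartheta}}\nabla_v f(t)\|_{L^3_xL^{1+\delta}_v}$, and $\|w_{\widetilde{\vartheta}}\alpha^\beta_{f,\varepsilon}\nabla_{x,v}f(t)\|_p^p$ follows from the Duhamel representations combined with dominated convergence under the uniform bounds. The principal obstacle is maintaining $\|\nabla_{x,v}f^n\|_\infty<\infty$ uniformly in $n$: the singular soft-potential term \eqref{1-X} in Proposition \ref{prop 3} can only be absorbed at the price of an $O(\varepsilon)\|\nabla_{x,v}f\|_\infty$ remainder, so this pointwise derivative bound must be carried along the iteration by a dedicated characteristic estimate rather than recovered a posteriori from the $L^p$ bound, and $\varepsilon$ has to be tuned against the initial data before the iteration is launched.
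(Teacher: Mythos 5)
Your proposal follows the same overall architecture as the paper's proof: set up an iteration, control $\|w_{\widetilde{\vartheta}}f^{n+1}\|_\infty$ via Duhamel together with the nonnegativity of $\widetilde{\nu}$, verify the hypotheses of Propositions~\ref{prop 3}--\ref{prop 4} inductively to get uniform higher-order bounds, contract in $L^{1+\delta}$ via the stability estimate of Proposition~\ref{prop 5}, and pass to the limit using weak-$*$ lower semicontinuity of $L^\infty$ for $\|\nabla_{x,v}f\|_\infty$. The one place you genuinely diverge is the choice of iteration. You run a plain Picard scheme with the full nonlinearity $\Gamma(f^n,f^n)$ placed on the source side; this does not preserve nonnegativity of $F^{n+1}$, so you are forced to run a \emph{second}, positivity-preserving iteration and argue that both iterates converge to the same limit in $L^{1+\delta}$. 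The paper avoids this doubling: it iterates on $F$ with $Q_{\mathrm{gain}}(F^\ell,F^\ell)-Q_{\mathrm{loss}}(F^\ell,F^{\ell+1})$, i.e., the loss term is kept at level $\ell+1$ so it is absorbed into the damping rate $\nu^\ell=\widetilde{\nu}+\nu(\sqrt{\mu}f^\ell)$, and the very same iteration then gives both the $L^\infty$ closure (via $e^{\int_t^s\nu^\ell}\le 1$) and $F^{\ell+1}\ge 0$ in one stroke. Your route buys nothing and costs an extra identification-of-limits argument (though that argument is essentially free by uniqueness from Proposition~\ref{prop 5}); the paper's split of $Q_{\mathrm{loss}}$ across levels is both more economical and the standard way to encode the monotone structure of the collision operator. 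Everything else — the bootstrap that keeps $\|\nabla^2\phi^\ell\|_\infty$ under control on $[0,\widehat{T}]$, the remark that $\widehat{T}$ depends only on $M$ and not on $\delta^*$, the propagation of $\|\nabla_{x,v}f^n\|_\infty<\infty$ so that the $O(\varepsilon)$ remainder in the singular soft-potential term can be absorbed, and the weak-$*$ passage to the limit in Step~5 — matches the paper, up to slightly stronger weight exponents written in your intermediate bounds ($w_{\widetilde{\vartheta}}$ where the paper, consistent with Proposition~\ref{prop 3}, lands on $w_{s_0\widetilde{\vartheta}/4}$ and $w_{s_0\widetilde{\vartheta}/8}$).
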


\begin{proof}
We consider a sequence for $\ell \in \mathbb{N}$,
\begin{align}
& \partial_t F^{\ell+1}+v \cdot \nabla_x F^{\ell+1}-\nabla \phi^\ell \cdot \nabla_v F^{\ell+1}                  = Q_{\mathrm{gain}}(F^\ell , F^\ell)-Q_{\mathrm{loss}}(F^\ell, F^{\ell+1}),\\
& -\Delta \phi^\ell = \int_{\mathbb{R}^3}F^\ell \mathrm{d}v-\rho_0,\quad \frac{\partial \phi^\ell}{\partial n}\bigg|_{\partial \Omega}=0,
\end{align}
where $F^{\ell+1}(0,x,v)=F_0(x,v)$ for $\ell \in \mathbb{N}$ and $F^\ell (t,x,v)\mid_{\gamma_-}=\mu+\sqrt{\mu}g(t,x,v)\geq 0$. We set $F^0(t,x,v)\equiv \mu, \phi^0 \equiv 0$.

We shall prove the main results through the following steps.

\emph{Step 1}. The proofs of the positivity, i.e. $F^\ell \geq 0, \ell \in \mathbb{N}$. It is similar to \cite{Kim}, we omit the details for brevity.

\emph{Step 2}. In this step, we claim that by choosing $M \ll 1$ and $\widehat{T}=\widehat{T}(M) \ll 1$, it holds
\begin{equation}\label{L infty bdd}
\sup_{0 \leq s \leq \widehat{T}}\max_\ell \big\|w_{\widetilde{\vartheta}}f^\ell (s)\big\|_{\infty} \leq M.
\end{equation}

We denote that
\begin{equation*}
h^\ell (t,x,v):= w_{\widetilde{\vartheta}}(v)f^\ell (t,x,v), \,\,\ell \in \mathbb{N}.
\end{equation*}
Since $h^0=w_{\widetilde{\vartheta}} f^0 \equiv 0$, it is clear that $\sup_{0\leq s\leq \widehat{T}}
\|h^0(s)\|_{\infty}\equiv 0 \leq M$.

For $\ell \geq 1$, we inductively assume that
\begin{equation*}
\sup_{0\leq s\leq \widehat{T}}\|h^\ell(s)\|_{\infty}\leq M,
\end{equation*}
and we shall prove it holds for $\ell+1$ below.

First of all, $h^{\ell+1}$ solves
\begin{align}\label{h l+1}
&\left[\partial_t + v \cdot \nabla_x -\nabla{\phi^\ell}\cdot \nabla_v +\nu + \frac{v}{2}\cdot \nabla \phi^\ell +  \frac{\nabla \phi^\ell \cdot \nabla_v w_{\widetilde{\vartheta}}}{w_{\widetilde{\vartheta}}} - \frac{\partial_t w_{\widetilde{\vartheta}}}{w_{\widetilde{\vartheta}}}                                                                \right]h^{\ell+1}\nonumber\\
&= K_{w_{\widetilde{\vartheta}}} h^\ell -v \cdot \nabla\phi^\ell w_{\widetilde{\vartheta}} \sqrt{\mu} +w_{\widetilde{\vartheta}} \Gamma_{\mathrm{gain}}\left(\frac{h^\ell}{w_{\widetilde{\vartheta}}},
\frac{h^\ell}{w_{\widetilde{\vartheta}}}\right)-w_{\widetilde{\vartheta}} \Gamma_{\mathrm{loss}}
\left(\frac{h^\ell}{w_{\widetilde{\vartheta}}},\frac{h^{\ell+1}}{w_{\widetilde{\vartheta}}}\right),
\end{align}
where $K_{w_{\widetilde{\vartheta}}}(\cdot)=w_{\widetilde{\vartheta}} K(\frac{1}{w_{\widetilde{\vartheta}}}\cdot)$.

We define $\nu (F):=\int_{\mathbb{R}^3\times \mathbb{S}^2}|v-u|^\gamma b_0(\theta)F(u)\mathrm{d}\omega \mathrm{d}u$. Thus,
\begin{equation*}
w_{\widetilde{\vartheta}} \Gamma_{\mathrm{loss}}
\left(\frac{h^\ell}{w_{\widetilde{\vartheta}}},\frac{h^{\ell+1}}{w_{\widetilde{\vartheta}}}\right)
=h^{\ell+1}(v)\nu\left(\sqrt{\mu}\frac{h^\ell}{w_{\widetilde{\vartheta}}}\right).
\end{equation*}
Let
\begin{align*}
\nu^\ell (t,x,v)&:=\nu + \frac{v}{2}\cdot \nabla \phi^\ell +  \frac{\nabla \phi^\ell \cdot \nabla_v w_{\widetilde{\vartheta}}}{w_{\widetilde{\vartheta}}} - \frac{\partial_t w_{\widetilde{\vartheta}}}{w_{\widetilde{\vartheta}}}+\nu\left(\sqrt{\mu}\frac{h^\ell}
{w_{\widetilde{\vartheta}}}\right),\\
g^\ell &:= -v\cdot \nabla \phi^\ell \sqrt{\mu}+\Gamma_{\mathrm{gain}}\left(\frac{h^\ell}{w_{\widetilde{\vartheta}}},
\frac{h^\ell}{w_{\widetilde{\vartheta}}}\right),
\end{align*}
and then \eqref{h l+1} becomes
\begin{equation*}
[\partial_t +v \cdot \nabla_x - \nabla \phi^\ell \cdot \nabla_v +\nu^\ell ]h^{\ell+1}=K_{w_{\widetilde{\vartheta}}}h^\ell+w_{\widetilde{\vartheta}} g^\ell.
\end{equation*}
Along the trajectory, we have
\begin{equation*}
\frac{\mathrm{d}}{\mathrm{d}s}\left(e^{\int^s_t \nu^\ell(\tau)\mathrm {d}{\tau}}h^{\ell+1}(s)\right)=e^{\int_t^s \nu^\ell(\tau)\mathrm{d}\tau}\times \{K_{w_{\widetilde{\vartheta}}}h^\ell (s)+w_{\widetilde{\vartheta}} g^\ell(s)\}.
\end{equation*}
Thus,
\begin{align*}
h^{\ell+1}(t,x,v)&=\mathbf{1}_{t-t_b\leq 0}\, e^{\int_t^0 \nu^\ell(\tau)\mathrm{d}\tau}             h^{\ell+1}(0, x(0), v(0))\\
&+ \mathbf{1}_{t-t_b\geq 0}\,e^{\int_t^{t-t_b} \nu^\ell(\tau)\mathrm{d}\tau}w_{\widetilde{\vartheta}}g(t-t_b, x_b, v_b) \\
&+ \int_{\max\{t-t_b, 0\}}^t e^{\int_t^s \nu^\ell(\tau)\mathrm{d}\tau} \mid [K_{w_{\widetilde{\vartheta}}}h^\ell (s)+w_{\widetilde{\vartheta}} g^\ell(s)](s, x(s), v(s))\mid \mathrm{d}s  .
\end{align*}

We notice that
\begin{equation*}
\exp\left\{\vartheta \left(1+\frac{1}{(1+t)^\theta}\right) |v|^2\right\}\leq
\exp\left\{\vartheta \left(1+\frac{1}{(1+t)^\theta}\right)\langle v \rangle^2\right\}\leq
e^{2\vartheta}\exp\left\{\vartheta \left(1+\frac{1}{(1+t)^\theta}\right) |v|^2\right\}.
\end{equation*}
Hence,
\begin{equation*}
w_{\widetilde{\vartheta}} (v)\sim \exp\left\{\vartheta \left(1+\frac{1}{(1+t)^\theta}\right)\langle v \rangle^2\right\}.
\end{equation*}
Therefore,
\begin{align*}
\widetilde{\nu}&:=\nu + \frac{v}{2}\cdot \nabla \phi^\ell +  \frac{\nabla \phi^\ell \cdot \nabla_v w_{\widetilde{\vartheta}}}{w_{\widetilde{\vartheta}}} - \frac{\partial_t w_{\widetilde{\vartheta}}}{w_{\widetilde{\vartheta}}}\\
&\geq \langle v\rangle^\gamma -\left(\frac{1}{2}+ 4 \vartheta \right)\langle v\rangle |\nabla \phi^\ell|+ \vartheta\theta \langle v\rangle^2 \frac{1}{(v)^{\theta+1}}.
\end{align*}
With the Young's inequality, we have
\begin{equation*}
\langle v\rangle \lesssim \frac{\langle v\rangle^2}{(1+t)^{\theta+1}}+\langle v\rangle^\gamma (1+t)^{(1+\theta)(1-\gamma)}.
\end{equation*}
Note that $\|\nabla \phi^\ell\|_{\infty}\lesssim \|h^\ell\|_{\infty}\leq M \ll 1$ and $\vartheta\theta > M$, then
\begin{align*}
\widetilde{\nu}&\gtrsim \langle v\rangle^\gamma (1-(1+t)^{(1+\theta)(1-\gamma)}|\nabla \phi^\ell|) +(\vartheta\theta -|\nabla \phi^\ell|)\langle v\rangle^2 \frac{1}{(1+t)^{\theta+1}}\\
& \gtrsim \langle v\rangle^\gamma (1-(1+t)^{(1+\theta)(1-\gamma)}\|\nabla \phi^\ell\|_{\infty}) +(\vartheta\theta -\|\nabla \phi^\ell\|_{\infty})\langle v\rangle^2 \frac{1}{(1+t)^{\theta+1}} \\
& \gtrsim \frac{1}{2}\langle v\rangle^\gamma \gtrsim \nu.
\end{align*}
On the other hand, it holds that
\begin{equation*}
\left| \nu \left(\sqrt{\mu}\frac{h^\ell}{w_{\widetilde{\vartheta}}}\right)\right| \lesssim \|h^\ell\|_{\infty}\int_{\mathbb{R}^3}|u-v|^\gamma \sqrt{\mu(u)}\mathrm{d}u \lesssim \langle v\rangle^\gamma\|h^\ell\|_{\infty} \leq \langle v\rangle^\gamma M.
\end{equation*}
Then we have
\begin{align*}
\nu^\ell &= \widetilde{\nu} + \nu \left(\sqrt{\mu}\frac{h^\ell}{w_{\widetilde{\vartheta}}}\right)\\
& \gtrsim \langle v\rangle^\gamma (1-(1+t)^{(1+\theta)(1-\gamma)}\|\nabla \phi^\ell\|_{\infty}-M) +(\vartheta\theta -\|\nabla \phi^\ell\|_{\infty})\langle v\rangle^2 \frac{1}{(1+t)^{\theta+1}} \\
& \gtrsim \frac{1}{2}\langle v\rangle^\gamma \gtrsim \nu.
\end{align*}
Thus we obtain that
\begin{equation*}
e^{\int_t^s \nu^\ell (\tau)\mathrm{d}\tau}\leq 1, \,\,\,0 \leq s \leq t.
\end{equation*}
Therefore,
\begin{align*}
|h^{\ell+1}(t)|&\leq \|h_0\|_{\infty}+ \sup_{0\leq s\leq t}\|w_{\widetilde{\vartheta}}g(s)\|_{\infty}\\
&+ \int_0^t \mid K_{w_{\widetilde{\vartheta}}}h^\ell (s, x(s), v(s))\mid \mathrm{d}s+ \int_0^t \mid w_{\widetilde{\vartheta}}g^\ell (s, x(s), v(s))\mid \mathrm{d}s.
\end{align*}
With the properties of $K$ and $\Gamma$ of soft potential (see for example \cite{shuangqian}, Lemmas 2.2 and 2.3), we immediately know that
\begin{align*}
\|K_{w_{\widetilde{\vartheta}}}h^\ell\|_{\infty}&\leq C \|h^\ell\|_{\infty},\\
\left\|w_{\widetilde{\vartheta}}\Gamma\left(\frac{h^\ell}{w_{\widetilde{\vartheta}}}, \frac{h^\ell}{w_{\widetilde{\vartheta}}}\right)\right\|_{\infty} &\lesssim \langle v\rangle^\gamma \|h^\ell\|_{\infty}^2 \lesssim M \|h^\ell\|_{\infty}\leq C \|h^\ell\|_{\infty},
\end{align*}
where $C>0$ is independent of the time $s$.

Combining these facts with $\|w_{\widetilde{\vartheta}}(-v\cdot \nabla \phi^\ell \sqrt{\mu})\|_{\infty}\lesssim \|\nabla \phi^\ell\|_{\infty}\leq C \|h^\ell\|_{\infty}$ and choosing $\widehat{T}= \frac{1}{2 C}$, we obtain that
\begin{align*}
\|h^{\ell+1}(t)\|_{\infty}&\leq \|h_0\|_{\infty}+\sup_{0\leq s\leq t}\|w_{\widetilde{\vartheta}}g(s)\|_{\infty}+Ct \sup_{0\leq s \leq t}\|h^\ell (s)\|_{\infty}\\
& \leq \frac{M}{2}+ C \widehat{T} M \leq M.
\end{align*}
Hence we have $\sup_{0\leq s\leq \widehat{T}}\|h^{\ell+1}(s)\|_{\infty}\leq M$ and obtain the desired result \eqref{L infty bdd}.
\begin{remark} \label{local exist}
We immediately know that if $\|h_0\|_{\infty}+\sup_{0\leq s\leq \infty}\|w_{\widetilde{\vartheta}}g(s)\|_{\infty} \leq (1-k)M$, $k \in (0,1)$, we can choose $\widehat{T} (k) =\frac{k}{C}$. As $k \rightarrow 0$, $\widehat{T}(k)\rightarrow 0$. Therefore, assuming that there exists some $T>0$ such that
\begin{align}\label{local time}
T = \sup_t \bigg\{t>0:\|h(t)\|_{\infty} + \sup_{0 \leq s \leq \infty}\|w_{\widetilde{\vartheta}}g(s)\|_{\infty} \leq M\bigg\},
\end{align}
we regard such $T$ as the ``initial'' time, then the corresponding $\widehat{T}$ will vanish. It means that such $T$ cannot be extended to $T+$, so $T$ satisfying \eqref{local time} is the largest existing time.
According to this way to expand the local solution, it is clear that the solution exists in $[0,T]$, even if supposing in the next section that $\|h_0\|_{\infty}+ \sup_{0\leq s\leq \infty}\|w_{\widetilde{\vartheta}}g(s)\|_{\infty}\leq \delta^* M$, $\delta^* \ll 1$.

\end{remark}


\emph{Step 3.} We prove that $\{f^\ell\}_{\ell \in \mathbb{N}}$ is the Cauchy sequence in $L^{1+}(\Omega \times \mathbb{R}^3) \cap L^{1+}(\gamma)$.

(i). We define, for $0< \varepsilon \ll 1$ and some $0 < \vartheta_1 < \vartheta$, $\widetilde{\vartheta_1}= \vartheta_1 (1+ (1+t)^{-\theta})$,
\begin{equation*}
E^{\ell+1}(t):= \big\|w_{\widetilde{\vartheta_1}}f^{\ell+1}(t)\big\|_p^p+\big\|w_{\widetilde{\vartheta_1}}
\alpha^\beta_{f^\ell, \varepsilon}\nabla_{x, v}f^{\ell+1}(t)\big\|_p^p.
\end{equation*}
Actually, we know that $\vartheta_1 \leq \frac{s_0}{4} \vartheta$ from Proposition \ref{prop 3}.

We inductively prove that there exist $T^{**}\ll 1$, $T^{**}< \widehat{T}$ and $C>0$ such that
\begin{align}\label{T **}
\max_{\ell} \sup_{0 \leq t\leq T^{**}}E^\ell (t)\leq C \{\|w_\vartheta f(0)\|_p+ \|w_\vartheta \alpha^\beta_{f_0, \varepsilon}\nabla_{x, v}f(0)\|_p+ \|w_\vartheta g\|_{L^p_{s, x, v}} +\|w_\vartheta \nabla_{x, v}g\|_{L^p ((0, \infty); L_{x, v}^\infty )}\}.
\end{align}

For the $\ell =0$ case, $E^0(t)\equiv 0$.
We inductively assume that it holds for the $\ell\geq 1$ case.

First, we claim that for some small constant $\Lambda_2 >0$, $\sup_{0\leq t \leq \widehat{T}}e^{\Lambda_2 t}\|\nabla^2 \phi^{\ell}(t)\|_{\infty}\lesssim M$, where $\widehat{T} \ll 1$ is chosen above.
In fact, considering $\|\nabla^2 \phi (0)\|_{\infty}\leq \delta^* M$, by the continuity, if there is $t'< \widehat{T}$, $\|\nabla^2 \phi^\ell(t')\|_{\infty}$ has already attain $M$, then we use Proposition \ref{prop 3} in $[0, t']$ to get $\|\nabla^2 \phi^\ell(t')\|_{\infty} \leq e^{Ct'} \delta^* M < M$. This conflict shows that the inequality holds in $[0, \widehat{T}]$. Hence, Proposition \ref{prop 3} can be successfully utilized in $[0, \widehat{T}]$.

Similar to the proof of Proposition \ref{prop 3} and considering the index $\ell$, we have
\begin{align*}
E^{\ell +1}(t)=& \|w_{\widetilde{\vartheta_1}}f^{\ell+1}(t)\|_p^p+\|w_{\widetilde{\vartheta_1}}
\alpha^\beta_{f^\ell, \varepsilon}\nabla_{x, v}f^{\ell+1}(t)\|_p^p \\
 \lesssim & \|w_\vartheta f(0)\|_p+ \|w_\vartheta \alpha^\beta_{f_0, \varepsilon}\nabla_{x, v}f(0)\|_p+ \|w_\vartheta g\|_{L^p_{s, x, v}} +\|w_\vartheta \nabla_{x, v}g\|_{L^p ((0, \infty); L_{x, v}^\infty )}\\
& + t \left( 1+ \sup_{0 \leq s \leq t}\|h^\ell (s)\|_{\infty}+ \sup_{0 \leq s \leq t}\|h^{\ell+1} (s)\|_{\infty}+ \sup_{0 \leq s \leq t}\|\nabla^2 \phi^\ell (s)\|_{\infty}\right) \left( \sup_{0 \leq s \leq t}E^\ell (s) +
\sup_{0 \leq s \leq t}E^{\ell+1} (s)\right).
\end{align*}
We choose $T^{**} < \widehat{T} \ll 1$ to get
\begin{align*}
\sup_{0\leq s\leq T^{**}}E^{\ell+1}(s)\lesssim & \|w_\vartheta f(0)\|_p+ \|w_\vartheta \alpha^\beta_{f_0, \varepsilon}\nabla_{x, v}f(0)\|_p+ \|w_\vartheta g\|_{L^p_{s, x, v}} +\|w_\vartheta \nabla_{x, v}g\|_{L^p ((0, \infty); L_{x, v}^\infty )}\\
& + T^{**}(1+M)\sup_{0\leq s\leq T^{**}}E^{\ell}(s).
\end{align*}
With the induction hypothesis, we obtain the conclusion \eqref{T **}.

(ii). We claim that for some $\vartheta_2 \leq \frac{\vartheta_1}{2}$, $\widetilde{\vartheta_2}=\vartheta_2 (1+ (1+t)^{-\theta})$, it holds
\begin{equation*}
\max_\ell \sup_{0\leq s \leq T^{**}} \|w_{\widetilde{\vartheta_2}}\nabla_v f^\ell (s)\|_{L^3_x L^{1+\delta}_v}\lesssim 1.
\end{equation*}

When $\ell =0$,  $\sup_{0\leq s \leq T^{**}} \|w_{\widetilde{\vartheta_2}}\nabla_v f^0 (s)\|_{L^3_x L^{1+\delta}_v}\equiv 0$.

Supposing that the result holds for the $\ell$ case, we follow the same proof of Proposition \ref{prop 4} and conclude that
\begin{align*}
\|w_{\widetilde{\vartheta_2}}\nabla_v f^{\ell+1} (t)\|_{L^3_x L^{1+\delta}_v}  \lesssim & \|w_{\vartheta}\nabla_v f_0\|_{L^3_{x,v}}+ t \left( 1+
\sup_{0 \leq s \leq t}\|h^\ell (s)\|_{\infty}+ \sup_{0 \leq s \leq t}\|h^{\ell+1} (s)\|_{\infty}\right)\\
& \times \sup_{0 \leq s \leq t} \left\{ \|h^\ell (s)\|_{\infty} + \|h^{\ell+1} (s)\|_{\infty}     + E^\ell (s)+ E^{\ell+1}(s)+ \|w_{\widetilde{\vartheta_2}}\nabla_v f^\ell (s)\|_{L^3_x L^{1+\delta}_v}\right\}.
\end{align*}
As $t \in [0, T^{**}]$, we have
\begin{equation*}
\sup_{0\leq s \leq T^{**}} \|w_{\widetilde{\vartheta_2}}\nabla_v f^{\ell+1} (s)\|_{L^3_x L^{1+\delta}_v}\lesssim 1.
\end{equation*}

(iii). We claim that $\{w_{\widetilde{\vartheta_2}} f^\ell\}$ is a Cauchy sequence in $L^{1+}(\Omega \times \mathbb{R}^3) \cap L^{1+}(\gamma)$, for $t \in [0, T^{**}]$.

Note that $(f^{\ell+1}- f^\ell)|_{t=0}\equiv f_0-f_0 =0, (f^{\ell+1}- f^\ell)|_{\gamma_-}
\equiv g-g =0$. Through the same procedure of Proposition \ref{prop 5}, we have
\begin{align*}
&\sup_{0 \leq s \leq t}\|w_{\widetilde{\vartheta_2}}(f^{\ell+1}- f^\ell)(s)\|^{1+\delta}_{L^{1+\delta}(\Omega \times \mathbb{R}^3)}
+ \int_0^t |w_{\widetilde{\vartheta_2}}(f^{\ell+1}- f^\ell)(s)|^{1+\delta}_{1+\delta, +} \\
& \lesssim t\left( 1+ \sup_{0 \leq s \leq t}\max_{m}\left\{\|h^m (s)\|_{\infty}+
\|w_{\widetilde{\vartheta_2}}\nabla_v f^m(s)\|_{L^3_x L^{1+\delta}_v }\right\} \right)
\sup_{0 \leq s \leq t}\big\|w_{\widetilde{\vartheta_2}}(f^{\ell}- f^{\ell-1})(s)\big\|^{1+\delta}_{L^{1+\delta}(\Omega \times \mathbb{R}^3)}.
\end{align*}
Denote $\Phi^\ell := \sup_{0 \leq s \leq t}\big\|w_{\widetilde{\vartheta_2}}(f^{\ell+1}- f^\ell)(s)\big\|^{1+\delta}_{L^{1+\delta}(\Omega \times \mathbb{R}^3)}
+ \int_0^t |w_{\widetilde{\vartheta_2}}(f^{\ell+1}- f^\ell)(s)|^{1+\delta}_{1+\delta, +} $. It holds that
\begin{equation*}
\Phi^\ell \lesssim t \Phi^{\ell-1} \lesssim \cdots \lesssim t^\ell \Phi^0,
\end{equation*}
where $\Phi^0 \equiv \sup_{0 \leq s \leq t}\big\|w_{\widetilde{\vartheta_2}}f^{1} (s)\big\|^{1+\delta}_{L^{1+\delta}(\Omega \times \mathbb{R}^3)}
+ \int_0^t \big|w_{\widetilde{\vartheta_2}}f^1(s)\big|^{1+\delta}_{1+\delta, +}$.

Hence we have, supposing $\ell > m$,
\begin{align*}
&\sup_{0 \leq s \leq t}\|w_{\widetilde{\vartheta_2}}(f^{\ell}- f^m)(s)\|^{1+\delta}_{L^{1+\delta}(\Omega \times \mathbb{R}^3)}
+ \int_0^t |w_{\widetilde{\vartheta_2}}(f^{\ell}- f^m)(s)|^{1+\delta}_{1+\delta, +} \\
& \leq \Phi^{\ell-1}+ \cdots + \Phi^m \lesssim (t^{\ell-1}+ \cdots + t^m)\Phi^0.
\end{align*}
Denoting $S_m := 1+ \cdots + t^{m-1}$ and noticing that $S_m \rightarrow \frac{1}{1-t}$ as $m\rightarrow \infty$, thus it holds that $\{S_m\}$ is a Cauchy sequence and $\{w_{\widetilde{\vartheta_2}} f^\ell\}$ is also a Cauchy sequence.

Therefore, $\{f^\ell\}$ is a Cauchy sequence in $L^{1+}(\Omega \times \mathbb{R}^3) \cap L^{1+}(\gamma)$. Hence there exists a unique $f$, such that $f^\ell \rightarrow f$ strongly in $L^{1+}(\Omega \times \mathbb{R}^3) \cap L^{1+}(\gamma)$ for $t \in [0, T^{**}]$.

\emph{Step 4}. We claim that the limit function $f$ is the weak solution to the VPB system \eqref{perturb eqn}--\eqref{incoming}.
We have already known that $f^\ell$ converges weakly$-*$ to $f$ in $L^\infty ([0, T]\times \overline{\Omega}\times \mathbb{R}^3)$ for some $T>0$. Then for each $h \in L^1 ([0, T]\times \overline{\Omega}\times \mathbb{R}^3)$, it holds $\langle f^\ell, h\rangle \rightarrow \langle f, h\rangle$. For any test function $\varphi \in C_c^\infty ([0, T]\times \overline{\Omega}\times \mathbb{R}^3)$, we have
\begin{align*}
&\int_0^T \int_{\Omega\times \mathbb{R}^3} \left[-\partial_t + v\cdot \nabla_x + \nu\right]\varphi f^{\ell+1} +\left[\nabla \phi^\ell \cdot \nabla_v \varphi+ \frac{v}{2}\cdot \nabla \phi^\ell \varphi\right]f^{\ell+1}\\
&= \int_0^T \int_{\Omega\times \mathbb{R}^3} Kf^\ell \varphi-v \cdot \nabla \phi^\ell \sqrt{\mu}\varphi + \Gamma_{\mathrm{gain}}(f^\ell, f^\ell)\varphi -\Gamma_{\mathrm{loss}}(f^\ell, f^{\ell+1})\varphi - \int_0^T \left( \int_{\gamma_+}- \int_{\gamma_-}\right)f^{\ell+1}\varphi \mathrm{d}\gamma \mathrm{d}t.
\end{align*}
The boundary term above follows from the $L^{1+\delta}(\gamma)$ stability of $f^\ell$. Because the product of weakly$-*$ converging sequences cannot preserve weak$-*$ convergence, we emphasize the following three terms:
\begin{align*}
&(1). \,\,\int_0^T \int_{\Omega\times \mathbb{R}^3}\Gamma_{\mathrm{loss}}(f^\ell, f^{\ell+1})\varphi,\\
&(2). \,\,\int_0^T \int_{\Omega\times \mathbb{R}^3}\Gamma_{\mathrm{gain}}(f^\ell, f^\ell)\varphi,\\
&(3). \,\,\int_0^T \int_{\Omega\times \mathbb{R}^3}\left[\nabla \phi^\ell \cdot \nabla_v \varphi+ \frac{v}{2}\cdot \nabla \phi^\ell \varphi\right]f^{\ell+1}.
\end{align*}

(1). To deal with the limit of $\int_0^T \int_{\Omega\times \mathbb{R}^3}\Gamma_{\mathrm{loss}}(f^\ell, f^{\ell+1})\varphi$, we take the difference:
\begin{align*}
&\left(\Gamma_{\mathrm{loss}}\left(f^\ell, f^{\ell+1}\right)-\Gamma_{\mathrm{loss}}\left(f, f\right)\right)\varphi (t, x, v)\\
&=\int_{\mathbb{R}^3 \times \mathbb{S}^2} |v-u|^\gamma b_0(\theta)\sqrt{\mu(u)}(f^\ell(u)(f^{\ell+1}(v)-f(v))+f(v)(f^\ell(u)-f(u)))\mathrm{d}u \mathrm{d}\omega \cdot \varphi (t, x, v)\\
:&= I_1+I_2.
\end{align*}
For the term $I_1$, we denote that
\begin{align*}
I_1=&(f^{\ell+1}(v)-f(v))\cdot\int_{\mathbb{R}^3 \times \mathbb{S}^2} |v-u|^\gamma b_0(\theta)\sqrt{\mu(u)}f^\ell(u)\mathrm{d}u \mathrm{d}\omega \cdot \varphi (t, x, v)\\
:=& (f^{\ell+1}(v)-f(v))\cdot h_1(t, x, v).
\end{align*}
We immediately have $|h_1|\lesssim \|f^\ell\|_{\infty}\langle v\rangle^\gamma |\varphi|\lesssim |\varphi (t, x, v)|\in L^1_{t, x, v}$ and thus
\begin{equation*}
\int_{t, x, v}(f^{\ell+1}(v)-f(v))\cdot h_1(t, x, v)\mathrm{d}t\mathrm{d}x\mathrm{d}v \rightarrow 0.
\end{equation*}
For the term $I_2$, by changing the variables $(u, v)\leftrightarrow (v, u)$, it holds
\begin{align*}
\int_{\mathbb{R}_v^3}I_2\mathrm{d}v & =\int_{\mathbb{R}^3_v \times \mathbb{R}^3_u \times \mathbb{S}^2}|v-u|^\gamma b_0(\theta)\sqrt{\mu(u)}f(v)(f^\ell(u)-f(u))\cdot \varphi(t, x, v)\mathrm{d}u\mathrm{d}v\mathrm{d}\omega \\
&= \int_{\mathbb{R}^3_v \times \mathbb{R}^3_u \times \mathbb{S}^2}|v-u|^\gamma b_0(\theta)\sqrt{\mu(v)}f(u)(f^\ell(v)-f(v))\cdot \varphi(t, x, u)\mathrm{d}u\mathrm{d}v\mathrm{d}\omega \\
& = (f^\ell(v)-f(v)) \cdot \int_{\mathbb{R}^3_v \times \mathbb{R}^3_u \times \mathbb{S}^2}|v-u|^\gamma b_0(\theta)\sqrt{\mu(v)}f(u)\cdot \varphi(t, x, u)\mathrm{d}u\mathrm{d}v\mathrm{d}\omega\\
:&= (f^\ell(v)-f(v)) \cdot \int_{\mathbb{R}^3_v} h_2(t, x, v)\mathrm{d}v.
\end{align*}
We have $|h_2|\leq \sqrt{\mu(v)}\int_{\mathbb{R}^3_u} |v-u|^\gamma w^{-1}_{\widetilde{\vartheta}}(u)\|w_{\widetilde{\vartheta}}f(u)\|_\infty \varphi(t, x, u)\mathrm{d}u \mathrm{d}\omega \lesssim \sqrt{\mu(v)}\chi_\varphi(t, x) \in L^1_{t, x, v}$, where $\chi_\varphi$ is the characteristic function of $ \mathrm{supp} \varphi|_{t, x}$.

Therefore, it holds for $\ell \rightarrow \infty$ that
\begin{align*}
\int_0^T \int_{\Omega\times \mathbb{R}^3}\Gamma_{\mathrm{loss}}(f^\ell, f^{\ell+1})\varphi \rightarrow \int_0^T \int_{\Omega\times \mathbb{R}^3}\Gamma_{\mathrm{loss}}(f, f)\varphi.
\end{align*}
(2). To deal with the limit of $\int_0^T \int_{\Omega\times \mathbb{R}^3}\Gamma_{\mathrm{gain}}(f^\ell, f^\ell)\varphi$, we take the difference:
\begin{align*}
&\left(\Gamma_{\mathrm{gain}}\left(f^\ell, f^{\ell}\right)-\Gamma_{\mathrm{gain}}\left(f, f\right)\right)\varphi (t, x, v)\\
&=\int_{\mathbb{R}^3 \times \mathbb{S}^2} |v-u|^\gamma b_0(\theta)\sqrt{\mu(u)}(f^\ell(u')(f^{\ell}(v')-f(v'))+f(v')(f^\ell(u')-f(u')))\mathrm{d}u \mathrm{d}\omega \cdot \varphi (t, x, v)\\
:&=I_3+I_4.
\end{align*}
For the term $I_3$, we have
\begin{align*}
\int_{\mathbb{R}_v^3}I_3\mathrm{d}v&=\int_{\mathbb{R}_v^3 \times\mathbb{R}^3_u \times \mathbb{S}^2}  |v-u|^\gamma b_0(\theta)\sqrt{\mu(u)}f^\ell(u')(f^{\ell}(v')-f(v'))\cdot \varphi (t, x, v)\mathrm{d}u \mathrm{d}v \mathrm{d}\omega\\
((u', v')\leftrightarrow(u, v))&= \int |v-u|^\gamma b_0(\theta)\sqrt{\mu(u')}f^\ell(u)(f^{\ell}(v)-f(v))\cdot \varphi (t, x, v')\mathrm{d}u \mathrm{d}v \mathrm{d}\omega\\
&= (f^\ell(v)-f(v))\cdot \int_v \left( \int |v-u|^\gamma b_0(\theta)\sqrt{\mu(u')}f^\ell(u)\cdot \varphi (t, x, v')\mathrm{d}u \mathrm{d}\omega\right)\mathrm{d}v\\
:& = (f^\ell(v)-f(v))\cdot \int_v h_3(t, x, v)\mathrm{d}v.
\end{align*}
Because $\varphi(t, x, v')$ has compact support, it holds $|v'|\leq C$, $C$ is a constant. Then we have
\begin{equation*}
\sqrt{\mu (u')}= e^{-\frac{1}{4} (|u|^2+|v|^2-|v'|^2)}\lesssim \sqrt{\mu(u)}\sqrt{\mu (v)}.
\end{equation*}
Hence $|h_3(t, x, v)|\lesssim \|f^\ell\|_\infty \int_u |v-u|^\gamma \sqrt{\mu(u)}\chi_\varphi \mathrm{d}u \cdot \sqrt{\mu(v)} \in L^1_{t, x, v}$. So $\int_{t,x,v}I_3 \rightarrow 0$ as $\ell \rightarrow \infty$.

For the term $I_4$, we have
\begin{align*}
\int_{\mathbb{R}_v^3}I_4\mathrm{d}v&=\int_{\mathbb{R}^3_v \times \mathbb{R}^3_v \times \mathbb{S}^2}  |v-u|^\gamma b_0(\theta)\sqrt{\mu(u)}f(v')(f^{\ell}(u')-f(u'))\cdot \varphi (t, x, v)\mathrm{d}u \mathrm{d}v \mathrm{d}\omega\\
((u', v')\leftrightarrow(v, u))&= \int |v-u|^\gamma b_0(\theta)\sqrt{\mu(v')}f(u)(f^{\ell}(v)-f(v))\cdot \varphi (t, x, u')\mathrm{d}u \mathrm{d}v \mathrm{d}\omega\\
&= (f^\ell(v)-f(v))\cdot \int_v \left( \int |v-u|^\gamma b_0(\theta)\sqrt{\mu(v')}f(u)\cdot \varphi (t, x, u')\mathrm{d}u \mathrm{d}\omega\right)\mathrm{d}v\\
:& = (f^\ell(v)-f(v))\cdot \int_v h_4(t, x, v)\mathrm{d}v.
\end{align*}
Because $\varphi(t, x, u')$ has compact support, it holds $|u'|\leq C$, $C$ is a constant. Then we have $\sqrt{\mu (v')}\lesssim \sqrt{\mu(u)} \sqrt{\mu(v)}$. Hence
$|h_4(t, x, v)|\lesssim \|f\|_\infty \int_u |v-u|^\gamma \sqrt{\mu(u)}\chi_\varphi \mathrm{d}u \cdot \sqrt{\mu(v)} \in L^1_{t, x, v}$. So $\int_{t,x,v}I_4 \rightarrow 0$ as $\ell \rightarrow \infty$.

Therefore, it holds for $\ell \rightarrow \infty$ that
\begin{align*}
\int_0^T \int_{\Omega\times \mathbb{R}^3}\Gamma_{\mathrm{gain}}(f^\ell, f^\ell)\varphi \rightarrow \int_0^T \int_{\Omega\times \mathbb{R}^3}\Gamma_{\mathrm{gain}}(f, f)\varphi.
\end{align*}

(3). To deal with the limit of $\int_0^T \int_{\Omega\times \mathbb{R}^3}\left[\nabla \phi^\ell \cdot \nabla_v \varphi+ \frac{v}{2}\cdot \nabla \phi^\ell \varphi\right]f^{\ell+1}$ as $\ell\rightarrow \infty$, we take the difference:
\begin{align*}
\nabla \phi^\ell \cdot \nabla_v \varphi f^{\ell+1}-\nabla \phi \cdot \nabla_v \varphi f&=\nabla_v \varphi \cdot (f^{\ell+1}(\nabla \phi^\ell- \nabla \phi)+\nabla \phi (f^{\ell+1}-f))\\
:&=I_5+I_6.
\end{align*}
Obviously, the another term, $\frac{v}{2}\cdot \nabla \phi^\ell \varphi f^{\ell+1}$, shares the same procedure with the latter.

For the term $I_6$:
\begin{equation*}
\int_{t,x,v}I_6=\int_0^t \int_{\Omega \times \mathbb{R}^3} (f^{\ell+1}-f)(\nabla_v \varphi \cdot \nabla \phi)\mathrm{d}x \mathrm{d}v \mathrm{d}s.
\end{equation*}
Note that $\int_0^t \int_{\Omega \times \mathbb{R}^3} |\nabla_v \varphi \cdot \nabla \phi|\mathrm{d}x \mathrm{d}v \mathrm{d}s \lesssim \|\nabla \phi\|_\infty \lesssim \|w_{\widetilde{\vartheta}}f\|_\infty \leq M$. Thus $\nabla_v \varphi \cdot \nabla \phi \in L^1_{t, x, v}$.

For the term $I_5$:
\begin{align*}
\int_{t,x,v}I_5&=\int_0^t \int_{\Omega \times \mathbb{R}^3} f^{\ell+1}\nabla_v \varphi \cdot (\nabla (\phi^\ell-\phi))\mathrm{d}x \mathrm{d}v \mathrm{d}s\\
&\leq \int_0^t \int_{\Omega \times \mathbb{R}^3} |f^{\ell+1}| |\nabla_v \varphi| \cdot |\nabla (\phi^\ell-\phi)|\mathrm{d}x \mathrm{d}v \mathrm{d}s\\
& \lesssim \int_0^t \int_{\Omega }  |\nabla (\phi^\ell-\phi)|\mathrm{d}x \mathrm{d}s.
\end{align*}
We notice that
\begin{align*}
\int_{\Omega}|\nabla (\phi^\ell-\phi)| \mathrm{d}x \lesssim \|\nabla (\phi^\ell-\phi)\|_{L^{1+\delta}_x}\lesssim \|\phi^\ell-\phi\|_{W^{2, 1+\delta}_x}
\lesssim \|\int_v (f^\ell-f)\sqrt{\mu}\mathrm{d}v\|_{L^{1+\delta}_x} \lesssim \|f^\ell-f\|_{L^{1+\delta}_{x, v}}.
\end{align*}
Therefore,
\begin{align*}
\int_0^t \int_{\Omega \times \mathbb{R}^3} f^{\ell+1}\nabla_v \varphi \cdot (\nabla (\phi^\ell-\phi))\mathrm{d}x \mathrm{d}v \mathrm{d}s \lesssim \int_0^t \|f^\ell-f\|_{L^{1+\delta}_{x, v}}\mathrm{d}s \rightarrow 0.
\end{align*}

In conclusion, $f$ is the weak solution to the VPB system \eqref{perturb eqn}--\eqref{incoming}.

\emph{Step 5}. The uniqueness and regularity of $f$.
We prove that $\nabla_{x,v}f^\ell $  converges weakly$-*$ to $\nabla_{x,v}f$ in $L^\infty(\Omega \times \mathbb{R}^3)$.
Actually, since $\|f^\ell\|_\infty\leq M$, it holds that $f^\ell $ converges weakly$-*$ to $f$ in $L^\infty(\Omega \times \mathbb{R}^3)$. For $\psi \in C_c^\infty (\Omega \times \mathbb{R}^3)$, we have
\begin{align*}
\lim_{\ell\rightarrow \infty}\int \nabla_{x,v}f^\ell \psi \mathrm{d}x\mathrm{d}v=-\lim_{\ell\rightarrow \infty}\int \nabla_{x,v}\psi f^\ell \mathrm{d}x\mathrm{d}v=-\int \nabla_{x,v}\psi f \mathrm{d}x\mathrm{d}v.
\end{align*}
Through the density argument, we have $\nabla_{x,v}f^\ell $ converges weakly$-*$ to $\mathcal{F}:=\nabla_{x,v}f$. By the weak lower semicontinuity of $L^p$, for $p=\infty$, it holds
\begin{align*}
\|\nabla_{x,v}f\|_\infty\leq \liminf_{\ell \rightarrow \infty}\|\nabla_{x,v}f^\ell\|_\infty < \infty.
\end{align*}
The fact that $\sup_{0 \leq t \leq T^*}\|f(t)\|_{\infty}$ and $\sup_{0 \leq t \leq T^*}\|\nabla_{x,v} f(t)\|_{\infty}$ are finite means that $f \in W^{1, \infty}_{\mathrm{loc}}([0, T^*]\times \Omega \times \mathbb{R}^3)$. Thus, $f$ is differentiable a.e. in $[0, T^*]\times \Omega \times \mathbb{R}^3$.
The strategies to claim that $\Big\|w_{\frac{s_0\widetilde{\vartheta}}{4}}\alpha^\beta_{f,\varepsilon}\nabla_{x,v}f(t)\Big\|_p< \infty$ is similar to that in \cite{Kim} and we omit the details. Therefore, $f$ is actually a strong solution to the VPB system \eqref{perturb eqn}--\eqref{incoming}.

\end{proof}

\section{Global Existence and Subexponential Decay}
For $0 < M, \delta^* \ll 1$ and $\vartheta \theta > \frac{4 M}{s_0}, \theta\gamma+2 >0$, we assume that there exists a $\lambda_0 >0$ such that
\begin{align*}
&\|w_{\widetilde{\vartheta}}f(0)\|_{\infty}+ \sup_{0\leq s\leq \infty}\|e^{\lambda_0 s^\rho} w_{\widetilde{\vartheta}}g(s)\|_{\infty} +(\|\nabla^2 \phi(0)\|_{\infty}
+ \|\nabla \phi(0)\|_2)\\
&+ \|w_\vartheta f(0)\|_p+ \|w_\vartheta \alpha^\beta_{f_0, \varepsilon}\nabla_{x, v}f(0)\|_p
+ \|w_\vartheta g\|_{L^p_{s, x, v}} +\|w_\vartheta \nabla_{x, v}g\|_{L^p ((0, \infty); L_{x, v}^\infty )} \leq \delta^* M,
\end{align*}
where $(p, \beta)$ satisfies \eqref{p beta}, and $\|w_\vartheta \nabla_v f_0\|_{L^3_{x, v}}< \infty$. With the local continuation and considering the Remark \ref{local exist} in the above section, we set $T>0$ is the largest time such that $\|e^{\lambda_m t^\rho}w_{\widetilde{\vartheta}}f(t)\|_\infty \leq M$ where $\lambda_m \leq \min\big\{\frac{\lambda}{2}, \lambda_2\big\}$ will be clear later.
If $T= \infty$, the case is trivial. Thus we assume that $T$ is finite.
Furthermore, there is an important observation that $T$ is independent of the specific value of $M$, but we will see that the selection of $M$ may depend on that fixed finite $T$. Choosing $M_T$ so small that $ 10 \sqrt{M_T} e^{CT^2}\leq \lambda_m$,
we shall prove that $\|e^{\lambda_m t^\rho}w_{\widetilde{\vartheta}}f(t)\|_\infty$ actually strictly less than $M$ on $[0, T]$, which conflicts to the definition of $T$. Through the bootstrap argument, we can extend the solution to $T+$ until $M_T$ no more satisfies $10 \sqrt{M_T}e^{C(T+ \sigma)^2}\leq \lambda_m$ for some $\sigma>0$.

Denote $h=w_{\widetilde{\vartheta}} f$. The weight linearized equation for $h$ reads
\begin{align*}
\left[\partial_t + v \cdot \nabla_x -\nabla{\phi}\cdot \nabla_v +\nu + \frac{v}{2}\cdot \nabla \phi +  \frac{\nabla \phi \cdot \nabla_v w_{\widetilde{\vartheta}}}{w_{\widetilde{\vartheta}}} - \frac{\partial_t w_{\widetilde{\vartheta}}}{w_{\widetilde{\vartheta}}}                                                                \right]h &=w_{\widetilde{\vartheta}}(Kf+\Gamma(f, f)-v\cdot \nabla \phi \sqrt{\mu}).
\end{align*}
We also denote that
\begin{align*}
w_{\widetilde{\vartheta}}(Kf+\Gamma(f, f)-v\cdot \nabla \phi \sqrt{\mu})&:= K_{w_{\widetilde{\vartheta}}}h + w_{\widetilde{\vartheta}} \widetilde{g},\\ \widetilde{\nu}&:= \nu + \frac{v}{2}\cdot \nabla \phi +  \frac{\nabla \phi \cdot \nabla_v w_{\widetilde{\vartheta}}}{w_{\widetilde{\vartheta}}} - \frac{\partial_t w_{\widetilde{\vartheta}}}{w_{\widetilde{\vartheta}}}.
\end{align*}
Along the trajectory, it holds
\begin{equation*}
\frac{\mathrm{d}}{\mathrm{d}s}\left( e^{\int_t^s \widetilde{\nu}(\tau)\mathrm{d}\tau} h(s)\right)= e^{\int_t^s \widetilde{\nu}(\tau)\mathrm{d}\tau } \left( K_{w_{\widetilde{\vartheta}}}h + w_{\widetilde{\vartheta}} \widetilde{g}\right).
\end{equation*}
Then we have
\begin{align}\label{h(t)}
h(t, x, v)=&\mathbf{1}_{t-t_b>0} e^{\int_t^{t-t_b} \widetilde{\nu}(\tau)\mathrm{d}\tau} h(t-t_b)\nonumber\\
& + \mathbf{1}_{t-t_b<0}e^{\int_t^0 \widetilde{\nu}(\tau)\mathrm{d}\tau} h_0
+ \int_{\max\{t-t_b, 0\}}^t e^{\int_t^s \widetilde{\nu}(\tau)\mathrm{d}\tau} w_{\widetilde{\vartheta}}\widetilde{g}(s)\mathrm{d}s\\
&+\int_{\max\{t-t_b, 0\}}^t e^{\int_t^s \widetilde{\nu}(\tau)\mathrm{d}\tau}
\left( K_{w_{\widetilde{\vartheta}}}^\chi h(s)+ K_{w_{\widetilde{\vartheta}}}^{1-\chi} h(s)\right)\mathrm{d}s. \nonumber
\end{align}
Similarly, we can obtain that
\begin{align}\label{h(s)}
h(s, x(s), v')=&\mathbf{1}_{s-t_b'>0} e^{\int_s^{s-t_b'} \widetilde{\nu}(\tau')\mathrm{d}\tau'} h(s-t_b')\nonumber\\
& + \mathbf{1}_{s-t_b'<0}e^{\int_s^0 \widetilde{\nu}(\tau')\mathrm{d}\tau'} h_0
+ \int_{\max\{s-t_b', 0\}}^s e^{\int_s^{s_1} \widetilde{\nu}(\tau')\mathrm{d}\tau'} w_{\widetilde{\vartheta}}\widetilde{g}(s_1)\mathrm{d}s_1\\
& +\int_{\max\{s-t_b', 0\}}^s e^{\int_s^{s_1} \widetilde{\nu}(\tau')\mathrm{d}\tau'}
\left( K_{w_{\widetilde{\vartheta}}}^\chi h(s_1)+ K_{w_{\widetilde{\vartheta}}}^{1-\chi} h(s_1)\right)\mathrm{d}s_1.\nonumber
\end{align}
Recalling the discussion in Section 4, it holds
\begin{align*}
\widetilde{\nu} \gtrsim \langle v\rangle^\gamma (1-(1+t)^{(1+\theta)(1-\gamma)}\|\nabla \phi\|_{\infty}) +(\vartheta\theta -\|\nabla \phi\|_{\infty})\langle v\rangle^2 \frac{1}{(1+t)^{\theta+1}}.
\end{align*}
On one hand, we deduce that $\widetilde{\nu}\gtrsim \frac{1}{2}\langle v\rangle^\gamma \gtrsim \nu$. On the other hand, by using the Young's inequality, we have
\begin{align*}
\widetilde{\nu}\gtrsim \frac{1}{2}\langle v\rangle^\gamma + C \frac{\langle v\rangle^2}{(1+t)^{\theta+1}}\gtrsim (1+t)^{\frac{(\theta+1)\gamma}{2-\gamma}}:= (1+t)^{\rho-1},
\end{align*}
where $\rho:= \frac{\theta \gamma+2}{2-\gamma}\in (0, 1)$, provided that $\theta\gamma +2 >0$. Hence,
\begin{align*}
e^{\int_t^s \widetilde{\nu}(\tau)\mathrm{d}\tau} \lesssim e^{\frac{C}{\rho}\int_t^s \mathrm{d}(1+\tau)^\rho}:= e^{\lambda ((1+s)^\rho-(1+t)^\rho)}\sim e^{\lambda s^\rho- \lambda t^\rho}, \,\,\, \lambda>0.
\end{align*}
With Lemmas \ref{kwh} and \ref{gamma} and the above crucial investigations, one obtains
\begin{align*}
\int_0^t e^{\int_t^s \widetilde{\nu}(\tau)\mathrm{d}\tau} K_{w_{\widetilde{\vartheta}}}^{1-\chi}h(s)\mathrm{d}s
\lesssim \int_0^t e^{\frac{1}{2}\int_t^s \widetilde{\nu}(\tau)\mathrm{d}\tau}\widetilde{\nu}(s, x(s), v(s))e^{\frac{\lambda}{2}(s^\rho-t^\rho)}\widetilde{\nu}^{-1}(s)
K_{w_{\widetilde{\vartheta}}}^{1-\chi}h(s)\mathrm{d}s.
\end{align*}
Note in addition that
\begin{align*}
&\widetilde{\nu}^{-1}(s)
K_{w_{\widetilde{\vartheta}}}^{1-\chi}h(s)\lesssim {\nu}^{-1}(s)
K_{w_{\widetilde{\vartheta}}}^{1-\chi}h(s)\lesssim \varepsilon^{\gamma+3}\|h(s)\|_\infty, \\
&\int_0^t e^{\frac{1}{2}\int_t^s \widetilde{\nu}(\tau)\mathrm{d}\tau}\widetilde{\nu}(s, x(s), v(s))\mathrm{d}s=2\int_0^t \mathrm{d}\left( e^{\frac{1}{2}\int_t^s \widetilde{\nu}(\tau)\mathrm{d}\tau}\right)\leq 2.
\end{align*}
Therefore, we have
\begin{align}\label{53}
\int_0^t e^{\int_t^s \widetilde{\nu}(\tau)\mathrm{d}\tau} K_{w_{\widetilde{\vartheta}}}^{1-\chi}h(s)\mathrm{d}s
\lesssim \varepsilon^{\gamma +3}e^{-\frac{\lambda}{2} t^\rho}\sup_{0 \leq s \leq t}\Big\|e^{\frac{\lambda}{2}s^\rho} h(s)\Big\|_\infty.
\end{align}
Considering $\|\nu^{-1}w_{\widetilde{\vartheta}}\Gamma (f, f)(s)\|_\infty \lesssim \|h(s)\|^2_\infty$  and then taking the same procedure as the above, we have
\begin{align}\label{54}
\int_0^t e^{\int_t^s \widetilde{\nu}(\tau)\mathrm{d}\tau} w_{\widetilde{\vartheta}}\Gamma (f, f)\mathrm{d}s
\lesssim \sup_{0 \leq s \leq t} \|h(s)\|_\infty e^{-\frac{\lambda}{2} t^\rho}\sup_{0 \leq s \leq t}\Big\|e^{\frac{\lambda}{2}s^\rho} h(s)\Big\|_\infty.
\end{align}
For the term of $\widetilde{g}$, i.e. $v \cdot \nabla \phi \sqrt{\mu}$, we have
\begin{align}\label{55}
&\int_0^t e^{\int_t^s \widetilde{\nu}(\tau)\mathrm{d}\tau} w_{\widetilde{\vartheta}}(v(s))v(s)\cdot \nabla \phi(s, x(s))\sqrt{\mu(v(s))}\mathrm{d}s \nonumber\\
 &\lesssim \int_0^t e^{\frac{1}{2}\int_t^s \widetilde{\nu}(\tau)\mathrm{d}\tau}\widetilde{\nu}(s)e^{\frac{\lambda}{2}(s^\rho-t^\rho)}
(\nu^{-1}(s)|v(s)|\mu^{1/4}(v(s)))|\nabla \phi(s, x(s))| \mathrm{d}s\nonumber\\
 &\lesssim e^{-\frac{\lambda}{2}t^\rho}\sup_{0\leq s \leq t}\|e^{\frac{\lambda}{2}s^\rho} \nabla \phi (s)\|_\infty.
\end{align}
In summary, combining \eqref{53}--\eqref{55} up, we obtain that
\begin{align*}
&\int_0^t e^{\int_t^s \widetilde{\nu}(\tau)\mathrm{d}\tau} (K_{w_{\widetilde{\vartheta}}}^{1-\chi}h(s)+ w_{\widetilde{\vartheta}} \widetilde{g}(s))\mathrm{d}s \\
\lesssim & e^{-\frac{\lambda}{2} t^\rho}\left\{(\varepsilon^{\gamma+3}+\sup_{0 \leq s \leq t} \|h(s)\|_\infty)\sup_{0 \leq s \leq t}\|e^{\frac{\lambda}{2}s^\rho} h(s)\|_\infty +\sup_{0\leq s \leq t}\|e^{\frac{\lambda}{2}s^\rho} \nabla \phi (s)\|_\infty\right\}.
\end{align*}
In addition, it holds
\begin{align*}
& e^{\int_t^{t-t_b} \widetilde{\nu}(\tau)\mathrm{d}\tau} h(t-t_b)+e^{\int_t^{0} \widetilde{\nu}(\tau)\mathrm{d}\tau} h(0)\\
 \lesssim & e^{-\frac{\lambda}{2}t^\rho}\left\{ \sup_{0 \leq s \leq t}\|e^{\frac{\lambda}{2}s^\rho}w_{\widetilde{\vartheta}}g(s)\|_\infty+
\|w_{\widetilde{\vartheta}}f(0)\|_\infty \right\}.
\end{align*}
By Lemma \ref{kwh}, we have $\widetilde{\nu}^{-1}(s)
K_{w_{\widetilde{\vartheta}}}^{\chi}h(s)\lesssim {\nu}^{-1}(s)
K_{w_{\widetilde{\vartheta}}}^{\chi}h(s)\lesssim \|h(s)\|_\infty$. Plugging \eqref{h(s)} into \eqref{h(t)} gives
\begin{align*}
&\int_0^t e^{\int_t^s \widetilde{\nu}(\tau)\mathrm{d}\tau}K_{w_{\widetilde{\vartheta}}}^\chi \left( \int_0^s e^{\int_s^{s_1} \widetilde{\nu}(\tau')\mathrm{d}\tau'}
\left(  K_{w_{\widetilde{\vartheta}}}^{1-\chi} h(s_1)+w_{\widetilde{\vartheta}}\widetilde{g}(s_1)\right)\mathrm{d}s_1 \right) \mathrm{d}s\\
&\lesssim e^{-\frac{\lambda}{2}t^\rho}\sup_{0 \leq s \leq t}\left\{ e^{\frac{\lambda}{2}s^\rho} \cdot e^{-\frac{\lambda}{2} s^\rho}\{(\varepsilon^{\gamma+3}+\sup_{0 \leq s_1 \leq s} \|h(s_1)\|_\infty)\sup_{0 \leq s_1 \leq s}\|e^{\frac{\lambda}{2}{s_1}^\rho} h(s_1)\|_\infty +\sup_{0\leq s_1 \leq s}\|e^{\frac{\lambda}{2}{s_1}^\rho} \nabla \phi (s_1)\|_\infty\}\right\}\\
& \lesssim e^{-\frac{\lambda}{2} t^\rho}\left\{(\varepsilon^{\gamma+3}+\sup_{0 \leq s \leq t} \|h(s)\|_\infty)\sup_{0 \leq s \leq t}\|e^{\frac{\lambda}{2}s^\rho} h(s)\|_\infty +\sup_{0\leq s \leq t}\|e^{\frac{\lambda}{2}s^\rho} \nabla \phi (s)\|_\infty\right\},
\end{align*}
and similarly,
\begin{align*}
&\int_0^t e^{\int_t^s \widetilde{\nu}(\tau)\mathrm{d}\tau}K_{w_{\widetilde{\vartheta}}}^\chi \left(e^{\int_s^{s-t_b'} \widetilde{\nu}(\tau')\mathrm{d}\tau'} h(s-t_b')+e^{\int_s^{0} \widetilde{\nu}(\tau')\mathrm{d}\tau'} h(0) \right)\mathrm{d}s \\
& \lesssim e^{-\frac{\lambda}{2}t^\rho}\left\{ \sup_{0 \leq s \leq t}\|e^{\frac{\lambda}{2}s^\rho}w_{\widetilde{\vartheta}}g(s)\|_\infty+
\|w_{\widetilde{\vartheta}}f(0)\|_\infty\right\}.
\end{align*}
In summary, after the twice iteration, we obtain
\begin{equation}\label{56}
\begin{aligned}
&|h(t, x, v)|\\
\lesssim & e^{-\frac{\lambda}{2} t^\rho}\left\{(\varepsilon^{\gamma+3}+\sup_{0 \leq s \leq t} \|h(s)\|_\infty)\sup_{0 \leq s \leq t}\|e^{\frac{\lambda}{2}s^\rho} h(s)\|_\infty +\sup_{0\leq s \leq t}\|e^{\frac{\lambda}{2}s^\rho} \nabla \phi (s)\|_\infty\right\}\\
& + e^{-\frac{\lambda}{2}t^\rho}\left\{ \sup_{0 \leq s \leq t}\|e^{\frac{\lambda}{2}s^\rho}w_{\widetilde{\vartheta}}g(s)\|_\infty+
\|w_{\widetilde{\vartheta}}f(0)\|_\infty \right\}\\
& +\int_{\max\{t-t_b, 0\}}^t \int_{\max\{s-t_b', 0\}}^s \int_{v'} \int_{v''}
e^{\int_t^{s} \widetilde{\nu}(\tau)\mathrm{d}\tau}
e^{\int_s^{s_1} \widetilde{\nu}(\tau')\mathrm{d}\tau'}
\mathbf{k}_w^\chi (v(s), v')\mathbf{k}_w^\chi (v'(s_1), v'')h(s_1, x'(s_1), v'')\mathrm{d}v''\mathrm{d}v'\mathrm{d}s_1\mathrm{d}s.
\end{aligned}
\end{equation}
The last term on the right-hand side of \eqref{56} is controlled by
\begin{align*}
\int_0^t \int_0^s \int_{v'} \int_{v''} &\left[ e^{\frac{1}{2}\int_t^{s} \widetilde{\nu}(\tau)\mathrm{d}\tau}\widetilde{\nu}(v(s))\right]
\left[ e^{\frac{1}{2}\int_s^{s_1} \widetilde{\nu}(\tau')\mathrm{d}\tau'}\widetilde{\nu}(v'(s_1))\right]
e^{\frac{\lambda}{2}({s_1}^\rho-t^\rho)}
(\widetilde{\nu}^{-1}(v(s))\mathbf{k}_w^\chi (v(s), v'))\\
& \cdot(\widetilde{\nu}^{-1}(v'(s_1))\mathbf{k}_w^\chi (v'(s_1), v''))
\cdot h(s_1, x'(s_1), v'')\mathrm{d}v''\mathrm{d}v'\mathrm{d}s_1\mathrm{d}s := \mathcal{E}.
\end{align*}
By Lemma \ref{kwh}, for $\varepsilon$ sufficiently small, we know that
\begin{equation*}
\int_{\mathbb{R}^3_u} \nu^{-1}(v)\mathbf{k}_w^\chi (v, u)e^{\varepsilon |v-u|^2}\mathrm{d}u \lesssim \langle v\rangle^{-2}\lesssim 1,
\end{equation*}
where
\begin{align*}
\nu^{-1}(v)\mathbf{k}_w^\chi (v, u)&\lesssim \langle v\rangle^{-\gamma} \frac{\exp\left( -\frac{s_2}{8}|u-v|^2- \frac{s_1}{8}\frac{(|v|^2-|u|^2)^2}{|v-u|^2}+\widetilde{\vartheta}
(|v|^2-|u|^2)\right)}{|v-u|(1+|v|+|u|)^{1-\gamma}}\\
& \lesssim \frac{\exp\left( -\frac{s_2}{8}|u-v|^2- \frac{s_1}{8}\frac{(|v|^2-|u|^2)^2}{|v-u|^2}+\widetilde{\vartheta}
(|v|^2-|u|^2)\right)}{|v-u|}:= \mathbf{k}_3 (v, u).
\end{align*}
We also have
\begin{align*}
\int_{\mathbb{R}^3_u} \mathbf{k}_3 (v, u) e^{\varepsilon |v-u|^2}\mathrm{d}u \lesssim \langle v\rangle^{-1} \lesssim 1.
\end{align*}
Hence,
\begin{align}\label{twice k3}
\mathcal{E}\lesssim e^{-\frac{\lambda}{2}t^\rho}\sup_{0\leq s_1 \leq t}\{\|e^{\frac{\lambda}{2}{s_1}^\rho}h(s_1)\|_\infty\}
\int \mathbf{k}_3 (v(s), v')\mathbf{k}_3 (v'(s_1), v'')\mathrm{d}v' \mathrm{d}v''.
\end{align}
Below we use some ideas developed in \cite{Guo2010} to simplify the integration (\ref{twice k3}).

\emph{Case 1}. $|v|>N$.
Note that $|v(t)-v(s)|\leq \int_s^t |\nabla \phi(\tau)|\mathrm{d}\tau \leq \int_s^t M e^{-\lambda_m \tau^\rho}\mathrm{d}\tau \leq \frac{M}{\lambda_m^{\frac{1}{\rho}}} \Gamma(\frac{1}{\rho}+1)\lesssim \frac{M}{\lambda_m^{\frac{1}{\rho}}}\ll 1$, where we have used the Gamma function:
\begin{align*}
\int_0^\infty e^{-s^\rho}\mathrm{d}s= \frac{1}{\rho}\int_0^\infty \tau^{\frac{1}{\rho}-1}e^{-\tau}\mathrm{d}\tau = \frac{1}{\rho}\Gamma\left(\frac{1}{\rho}\right)=\Gamma \left(\frac{1}{\rho}+1\right)< \infty.
\end{align*}
Thus we obtain $|v(s)|>N-M/(\lambda_m)^{\frac{1}{\rho}}$. Furthermore,
\begin{equation*}
\int \mathbf{k}_3 (v(s), v')\mathbf{k}_3 (v'(s_1), v'')\mathrm{d}v' \mathrm{d}v''\leq \frac{C_k}{1+|v(s)|}\leq \frac{C_k}{N}.
\end{equation*}

\emph{Case 2}. $|v|\leq N, |v'|>2N$ or $|v'|\leq 2N, |v''|>3N$.
In this case, we immediately know that $|v(s)|\leq N+M/(\lambda_m)^{\frac{1}{\rho}}, |v'|>2N$ or $|v'(s_1)|<2N+ M/(\lambda_m)^{\frac{1}{\rho}}, |v''|>3N$. It means that
$|v(s)-v'|> \frac{N}{2}$ or $|v'(s_1)-v''|>\frac{N}{2}$. Hence we have
\begin{align*}
|\mathbf{k}_3 (v(s), v')|&\leq e^{-\frac{\varepsilon}{4}N^2}|\mathbf{k}_3 (v(s), v') e^{\varepsilon|v(s)-v'|^2}|, \,\,\mathrm{or}\\
|\mathbf{k}_3 (v'(s_1), v'')|&\leq e^{-\frac{\varepsilon}{4}N^2}|\mathbf{k}_3 (v'(s_1), v'') e^{\varepsilon|v'(s_1)-v''|^2}|.
\end{align*}
Therefore,
\begin{align*}
\int \mathbf{k}_3 (v(s), v')\mathbf{k}_3 (v'(s_1), v'')\mathrm{d}v' \mathrm{d}v''
\leq e^{-\frac{\varepsilon}{4}N^2}.
\end{align*}

\emph{Case 3}. $s-s_1\leq \delta$, $\delta$ is sufficiently small.
By the time continuity of $\widetilde{\nu}(\tau')$, we have
\begin{align*}
&\int_{s-\delta}^s \left[ e^{\frac{1}{2}\int_s^{s_1} \widetilde{\nu}(\tau')\mathrm{d}\tau'}\widetilde{\nu}(v'(s_1))\right] \mathrm{d}s_1
= 2 \int_{s-\delta}^s \mathrm{d} \left(e^{\frac{1}{2}\int_s^{s_1} \widetilde{\nu}(\tau')\mathrm{d}\tau'}\right)=O(\delta).
\end{align*}

In summary, for the above three cases, it holds
\begin{align*}
\mathcal{E}\lesssim \{N^{-1} + e^{-\frac{\varepsilon}{4}N^2} +\delta\}e^{-\frac{\lambda}{2}t^\rho}\sup_{0\leq s \leq t}\{\|e^{\frac{\lambda}{2}{s}^\rho}h(s)\|_\infty\}.
\end{align*}

\emph{Case 4}. $s-s_1 >\delta, |v|\leq N, |v'|\leq 2N, |v''|\leq 3N$.
We choose some $\mathbf{k}_N (v, u)$ smooth with compact support, such that
\begin{align*}
\sup_{|v|\leq 3N}\int_{\mathbb{R}^3}|\mathbf{k}_N(v, u)-\mathbf{k}_3(v, u)|\mathrm{d}u \leq \frac{1}{N}.
\end{align*}
We have
\begin{align*}
&\mathbf{k}_3(v(s), v') \mathbf{k}_3(v'(s_1), v'')\\
=& ( \mathbf{k}_3(v(s), v')-\mathbf{k}_N(v(s), v') )\mathbf{k}_3(v'(s_1), v'')+\mathbf{k}_N(v(s), v') (\mathbf{k}_3(v'(s_1), v'')-\mathbf{k}_N(v'(s_1), v''))\\
&+\mathbf{k}_N(v(s), v') \mathbf{k}_N(v'(s_1), v'').
\end{align*}
On the right-hand-side above, the contribution of the first and second terms is
\begin{align*}
\frac{1}{N} e^{-\frac{\lambda}{2}t^\rho}\sup_{0\leq s \leq t}\{\|e^{\frac{\lambda}{2}{s}^\rho}h(s)\|_\infty\}.
\end{align*}
For the last term, it holds that $\mathbf{k}_N(v(s), v') \mathbf{k}_N(v'(s_1), v'') \lesssim 1$, so it remains
\begin{equation*}
\int_{v'}\int_{v''} h(s_1, x'(s_1), v'')\mathrm{d}v'\mathrm{d}v''
\end{equation*}
to be estimated.
Let $y\equiv x'(s_1)=x(s)-\int_{s_1}^s v'(\tau)\mathrm{d}\tau$ and note that $v'(\tau)=v'- \int_s^\tau \nabla \phi(\tau')\mathrm{d}\tau'$, we have
\begin{align*}
y=x(s)-v'(s-s_1)-\int^s_{s_1}\int^s_\tau \nabla_x \phi(\tau', x'(\tau'))\mathrm{d}\tau'\mathrm{d}\tau.
\end{align*}
Hence,
\begin{align*}
\frac{\partial y}{\partial v'}=-(s-s_1)\mathrm{Id}_{3\times 3}- \int_{s_1}^s \int_\tau^s \nabla^2 \phi(\tau')\nabla_{v'}x'(\tau')\mathrm{d}\tau' \mathrm{d}\tau.
\end{align*}
By taking the similar argument to the one in the preceding section, we know that $\sup_{0\leq s\leq T}\|\nabla^2 \phi (s)\|_\infty \leq M$, thus we have
\begin{align*}
\sup_{0\leq s\leq T}\|e^{\lambda_m s} \nabla^2 \phi (s)\|_\infty \leq  e^{\lambda_m T}M := C_T M
\end{align*}
for the finite $T$ selected above. By Lemma \ref{lemma10}, it holds
\begin{align*}
|\nabla_{v'}x'(\tau'; s, x'(s), v'(s))|\leq C_T e^{C_T M / \lambda_m^2} |s-\tau'|.
\end{align*}
So we have
\begin{align*}
&\int_{s_1}^s \int_\tau^s |\nabla^2 \phi(\tau')|\cdot  |\nabla_{v'}x'(\tau')|\mathrm{d}\tau' \mathrm{d}\tau \\
 \leq &\int_{s_1}^s \int_\tau^s e^{-\lambda_m \tau'}C_T M \cdot C_T e^{C_T M /\lambda_m^2}(s-\tau')\mathrm{d}\tau' \mathrm{d}\tau\\
 \leq & |s-s_1| \frac{C_T^2 M}{\lambda_m^2}e^{\frac{C_T^2 M}{\lambda_m^2}}.
\end{align*}
Let us take $ \frac{C_T^2 M}{\lambda_m^2}< \frac{1}{100}$, i.e. $M < \frac{\lambda_m^2}{100 C_T^2}$. Thus we obtain the crucial estimate:
\begin{align*}
\det \left( \frac{\partial y}{\partial v'}\right)\gtrsim |s-s_1|^3 \gtrsim \delta^3.
\end{align*}
Therefore,
\begin{align*}
\int_{v'}\int_{v''} h(s_1, x'(s_1), v'')\mathrm{d}v'\mathrm{d}v''
=& \int_{\Omega}\int_{v''}h(s_1, y, v'')\mathrm{d}v'' \left|\det\left( \frac{\partial v'}{\partial y}\right)\right|\mathrm{d}y\\
\lesssim & \delta^{-3}\int_{\Omega \times \{|v''|\leq 3N\}}w_{\widetilde{\vartheta}}(v'')f(s_1, y, v'')\mathrm{d}v''\mathrm{d}y\\
\lesssim & \delta^{-3}e^{C N^2}\|f(s_1)\|_{L^2_{x, v}}.
\end{align*}

As a result, we have
\begin{align*}
\mathcal{E}\lesssim \{N^{-1} + e^{-\frac{\varepsilon}{4}N^2} +\delta\}e^{-\frac{\lambda}{2}t^\rho}\sup_{0\leq s \leq t}\{\|e^{\frac{\lambda}{2}{s}^\rho}h(s)\|_\infty\} + \frac{e^{CN^2}}{\delta^3} e^{-\frac{\lambda}{2}t^\rho}\sup_{0\leq s\leq t}\{e^{\frac{\lambda}{2}s^\rho}\|f(s)\|_{L^2_{x, v}}\}.
\end{align*}

In summary, we eventually obtain that
\begin{align}\label{L infty}
e^{\frac{\lambda}{2}t^\rho}|h(t, x, v)| \lesssim & \|w_{\widetilde{\vartheta}}f(0)\|_\infty
+ \sup_{0 \leq s \leq t}\left\{e^{\frac{\lambda}{2}s^\rho}\|w_{\widetilde{\vartheta}}g(s)\|_\infty \right\}\nonumber\\
&+ \left[  \sup_{0 \leq s \leq t}\|h(s)\|_\infty + \varepsilon^{\gamma+3}+ \frac{1}{N}+ e^{-\frac{\varepsilon}{4}N^2}+\delta \right] \sup_{0 \leq s \leq t}\left\{e^{\frac{\lambda}{2}s^\rho}\|h(s)\|_\infty \right\}\nonumber\\
&+  \sup_{0 \leq s \leq t}\left\{e^{\frac{\lambda}{2}s^\rho}\|\nabla \phi(s)\|_\infty \right\}+ \frac{C_N}{\delta^3} \sup_{0 \leq s \leq t}\left\{e^{\frac{\lambda}{2}s^\rho}\|f(s)\|_{L^2_{x, v}}\right\}.
\end{align}

Below we deal with the $L^2$ estimate of $f$.
Multiplying both sides of (\ref{perturb eqn}) with $f$ and then with $e^{2\lambda_2 t^\rho}$, we have
\begin{align*}
&[\partial_t + v\cdot \nabla_x + E\cdot \nabla_v](e^{\lambda_2 t^\rho}f)^2-2\lambda_2 \rho t^{\rho-1}(e^{\lambda_2 t^\rho}f)^2-v\cdot E (e^{\lambda_2 t^\rho}f)^2 + 2 e^{\lambda_2 t^\rho}f L(e^{\lambda_2 t^\rho}f)\\
&= 2e^{2\lambda_2 t^\rho}f\Gamma (f, f)+ 2 e^{2\lambda_2 t^\rho}v\cdot E \sqrt{\mu}f.
\end{align*}
After taking integration over $(0, t)\times \Omega \times \mathbb{R}^3$, we arrive at
\begin{align*}
&\|e^{\lambda_2 t^\rho } f(t)\|_2^2- \|f(0)\|_2^2+ \int_0^t |e^{\lambda_2 \tau^\rho}f(\tau)|^2_{2,+}-\int_0^t |e^{\lambda_2 \tau^\rho}g(\tau)|^2_{2,-}-2 \lambda_2 \rho \int_0^t \tau^{\rho-1}\|e^{\lambda_2 \tau^\rho}f(\tau)\|_2^2\\
& + \int_0^t \int_{\Omega \times \mathbb{R}^3} v\cdot \nabla \phi e^{2\lambda_2 \tau^\rho}|f(\tau)|^2 + 2 \int_0^t e^{2\lambda_2 \tau^\rho}(f, Lf)\\
= & 2 \int_0^t \int_{\Omega \times \mathbb{R}^3} e^{2\lambda_2 \tau^\rho} f \Gamma (f, f)
-2 \int_0^t e^{2\lambda_2 \tau^\rho}\int_{\Omega}\nabla \phi \cdot \int_{\mathbb{R}^3}v\sqrt{\mu}f.
\end{align*}
On the other hand, we multiply both sides of (\ref{perturb eqn}) by $\sqrt{\mu}$ and then integrate over $\mathbb{R}^3_v$ to get
\begin{align*}
\partial_t \left(\int_v \sqrt{\mu}f \mathrm{d}v\right)+ \nabla_x \cdot \left(\int_v v \sqrt{\mu}f \mathrm{d}v\right)\equiv 0.
\end{align*}
If we denote
\begin{align*}
\widetilde{\rho}(t, x)=\int_v \sqrt{\mu}f \mathrm{d}v,\,\,\,\widetilde{j}(t, x)=\int_v v \sqrt{\mu}f \mathrm{d}v,
\end{align*}
then we have
\begin{align*}
\partial_t \widetilde{\rho}+ \nabla_x \cdot \widetilde{j}=0.
\end{align*}
By using the Neumann boundary condition $\frac{\partial \phi}{\partial n}\big|_{\partial\Omega}=0$, we obtain that
\begin{align*}
\frac{1}{2}\partial_t \int_{\Omega} |\nabla \phi|^2 \mathrm{d}x
=& \int_{\partial \Omega} \phi \partial_t \left( \frac{\partial \phi}{\partial n}\right) \mathrm{d}S_x + \int_{\Omega}\phi \partial_t (-\Delta \phi)\mathrm{d}x\\
= & \int_{\Omega}\phi \partial_t \widetilde{\rho}\mathrm{d}x
= \int_\Omega  -\phi \nabla_x \cdot \widetilde{j}\mathrm{d}x
= \int_{\partial \Omega}- \phi \widetilde{j}\cdot n \mathrm{d}S_x - \int_x E \cdot \widetilde{j}\mathrm{d}x\\
=& \int_{\partial\Omega}-\phi \mathrm{d}S_x \int_{\mathbb{R}^3_v}\sqrt{\mu}f \{n(x)\cdot v\} \mathrm{d}v- \int_{x, v}E \cdot v \sqrt{\mu} f \mathrm{d}x \mathrm{d}v
\end{align*}
To vanish the boundary term on the right-hand side of the last equality, we need
some condition about the in-flow, namely:
\begin{align*}
\int_{n\cdot v>0}F |n \cdot v|\mathrm{d}v= \int_{n\cdot v<0}G |n \cdot v|\mathrm{d}v,\,\,\,\, F|_{\gamma_-}=G.
\end{align*}
In fact, it follows that
\begin{align*}
0 = \int_{\mathbb{R}^3_v} F(t, x, v)\{n\cdot v\}\mathrm{d}v
= \int_{\mathbb{R}^3_v} (F(t, x, v)-\mu(v) )\{n\cdot v\}\mathrm{d}v
= \int_{\mathbb{R}^3_v} \sqrt{\mu} f(t, x, v)\{n\cdot v\}\mathrm{d}v.
\end{align*}
Thus, it holds
\begin{equation*}
\partial_t \int_{\Omega}|\nabla \phi|^2 \mathrm{d}x+ 2 \int_{\Omega \times \mathbb{R}^3}E\cdot v \sqrt{\mu}f \mathrm{d}x\mathrm{d}v =0.
\end{equation*}
After multiplying $e^{2 \lambda_2 t^\rho}$ to both sides of the above equation and integrating the result over $[0, t]$, we have
\begin{align*}
\|e^{\lambda_2 t^\rho}\nabla \phi(t)\|_{L^2(\Omega)}^2-\|\nabla \phi(0)\|_{L^2(\Omega)}^2
-2 \lambda_2 \rho \int_0^t \tau^{\rho-1}\|e^{\lambda_2 \tau^\rho}\nabla \phi(\tau)\|_{L^2(\Omega)}^2 \mathrm{d}\tau= \int_0^t 2 e^{2\lambda_2 \tau^\rho}\nabla \phi \cdot v \sqrt{\mu}f \mathrm{d}x\mathrm{d}v\mathrm{d}\tau.
\end{align*}
In a result, it holds
\begin{equation}\label{59}
\begin{aligned}
&\|e^{\lambda_2 t^\rho } f(t)\|_2^2 + \|e^{\lambda_2 t^\rho } \nabla \phi(t)\|_2^2 + \int_0^t |e^{\lambda_2 \tau^\rho}f(\tau)|^2_{2,+}
+ 2 \int_0^t e^{2\lambda_2 \tau^\rho}(f, Lf)\\
= & \|f(0)\|_2^2+ \|\nabla \phi(0)\|_2^2+\int_0^t |e^{\lambda_2 \tau^\rho}g(\tau)|^2_{2,-}+ 2 \lambda_2 \rho \int_0^t \tau^{\rho-1}\left(\|e^{\lambda_2 \tau^\rho}f(\tau)\|_2^2 +
\|e^{\lambda_2 \tau^\rho}\nabla \phi(\tau)\|_2^2 \right)\mathrm{d}\tau\\
& -\int_0^t \int_{\Omega \times \mathbb{R}^3} v\cdot \nabla \phi e^{2\lambda_2 \tau^\rho}|f(\tau)|^2 +2 \int_0^t \int_{\Omega \times \mathbb{R}^3} e^{2\lambda_2 \tau^\rho} f \Gamma (f, f).
\end{aligned}
\end{equation}
We estimate the terms on the right-hand side of \eqref{59} as follows:
\begin{align}
2 \lambda_2 \rho \int_0^t \tau^{\rho-1}\left(\|e^{\lambda_2 \tau^\rho}f(\tau)\|_2^2 +\|e^{\lambda_2 \tau^\rho}\nabla \phi(\tau)\|_2^2 \right)\mathrm{d}\tau
&\leq 2 \lambda_2 t^\rho \left( \sup_{0 \leq s \leq t}\|e^{\lambda_2 s^\rho} w_{\widetilde{\vartheta}}f(s)\|_\infty \right)^2,\label{1st}\\
\int_0^t \int_{\Omega \times \mathbb{R}^3} v\cdot \nabla \phi e^{2\lambda_2 \tau^\rho}|f(\tau)|^2
&\lesssim \int_0^t \int_{\Omega \times \mathbb{R}^3} |v|  w_{\widetilde{\vartheta}}^{-2}(v)|\nabla \phi (\tau)|e^{2\lambda_2 \tau^\rho}\|w_{\widetilde{\vartheta}}f(\tau)\|_\infty^2 \nonumber\\
&\lesssim t \sup_{0\leq s \leq t}\|h(s)\|_\infty \left( \sup_{0 \leq s \leq t}\|e^{\lambda_2 s^\rho} w_{\widetilde{\vartheta}}f(s)\|_\infty \right)^2,\label{2nd}\\
2 \int_0^t \int_{\Omega \times \mathbb{R}^3} e^{2\lambda_2 \tau^\rho} f \Gamma (f, f)
&\lesssim \varepsilon \int_0^t e^{2 \lambda_2 \tau^\rho}\|f\|_{\nu}^2 + C_\varepsilon \int_0^t e^{2 \lambda_2 \tau^\rho}\|\nu^{-\frac{1}{2}}\Gamma (f, f)\|_2^2.\label{3rd}
\end{align}

We shall deduce the $L^2$ coercivity for the case of the soft potential. Through the similar procedure as the ones in \cite{Kim, Esposito}, we have the following crucial estimate (provided $G(s)\leq \|f(s)\|_2^2$):
\begin{equation}
\|\mathbf{P}f(s)\|_\nu^2+ \|\nabla \phi(s)\|_2^2 \lesssim \frac{\mathrm{d}}{\mathrm{d}s}G(s)+ \|(\mathbf{I}-\mathbf{P})f(s)\|_\nu^2+|f(s)|_{2,+}^2 -|g(s)|_{2, -}^2+ \|\Gamma (f, f)\|_2^2.
\end{equation}
Adding $\|(\mathbf{I}-\mathbf{P})f(s)\|_\nu^2$ to the both sides of the above inequalities, we obtain that
\begin{equation}\label{(I-P)f}
\|f(s)\|_\nu^2+ \|\nabla \phi(s)\|_2^2 \lesssim \frac{\mathrm{d}}{\mathrm{d}s}G(s)+ 2\|(\mathbf{I}-\mathbf{P})f(s)\|_\nu^2+|f(s)|_{2,+}^2 -|g(s)|_{2, -}^2+ \|\Gamma (f, f)\|_2^2.
\end{equation}
It is well-known that there exists $\delta_0 >0$ such that
\begin{equation}\label{(Lf,f)}
(Lf, f)\geq \delta_0 \|(\mathbf{I}-\mathbf{P})f(s)\|_\nu^2.
\end{equation}
Multiplying $\delta_0 e^{2 \lambda_2 s^\rho}$ to (\ref{(I-P)f}) and integrating on $[0, t]$ gives
\begin{equation}\label{4th}
\begin{aligned}
&\delta_0 \int_0^t e^{2\lambda_2 s^\rho}\|f(s)\|_\nu^2
+\delta_0 \int_0^t e^{2\lambda_2 s^\rho}\|\nabla \phi(s)\|_2^2 \\
\lesssim & \delta_0 \int_0^t e^{2\lambda_2 s^\rho}G'(s)\mathrm{d}s
+ \int_0^t e^{2\lambda_2 s^\rho}2(Lf, f)+ \delta_0 \int_0^t e^{2\lambda_2 s^\rho}(|f(s)|_{2,+}^2 -|g(s)|_{2, -}^2)+\delta_0 \int_0^t e^{2\lambda_2 s^\rho}\|\nu^{-\frac{1}{2}}\Gamma(f, f)\|_2^2.
\end{aligned}
\end{equation}
On the right-hand side, it holds
\begin{align}\label{5th}
\delta_0 \int_0^t e^{2\lambda_2 s^\rho}G'(s)\mathrm{d}s
\lesssim & \delta_0 \left(e^{2\lambda_2 s^\rho} G(s)|_0^t -\int_0^t 2\lambda_2 \rho s^{\rho-1}
e^{2\lambda_2 s^\rho}G(s)\mathrm{d}s\right)\nonumber\\
\lesssim & \delta_0 \left( \|e^{2\lambda_2 t^\rho}f(t)\|_2^2+\|f(0)\|_2^2 +
\lambda_2 t^\rho \sup_{0 \leq s \leq t}\|e^{\lambda_2 s^\rho} h(s)\|_\infty^2\right),
\end{align}
and, by using Lemma \ref{gamma}, we have
\begin{align}\label{6th}
 \int_0^t e^{2\lambda_2 s^\rho}\|\nu^{-\frac{1}{2}}\Gamma (f, f)\|_2^2
= &  \int_0^t \int_{x,v}e^{2\lambda_2 s^\rho}w_{\widetilde{\vartheta}}^{-2}(v)|\nu^{-\frac{1}{2}}w_{\widetilde{\vartheta}}\Gamma (f,f)|^2 \mathrm{d}x\mathrm{d}v\mathrm{d}s\nonumber\\
\lesssim &  \int_0^t e^{2\lambda_2 s^\rho} \|h(s)\|_\infty^4 \mathrm{d}s\nonumber\\
\lesssim &  t \sup_{0 \leq s \leq t}\|h(s)\|_\infty^2
\sup_{0 \leq s \leq t}\|e^{\lambda_2 s^\rho} h(s)\|_\infty^2.
\end{align}

In summary, combining \eqref{59}--\eqref{3rd} and \eqref{4th}--\eqref{6th} up implies that
\begin{align*}
&(1-\delta_0)\|e^{\lambda_2 t^\rho } f(t)\|_2^2 + \|e^{\lambda_2 t^\rho } \nabla \phi(t)\|_2^2 \\
&+ (\delta_0- \varepsilon)\int_0^t e^{2\lambda_2 s^\rho }\|f(s)\|_\nu^2 +
\delta_0\int_0^t e^{2\lambda_2 s^\rho }\|\nabla \phi(s)\|_2^2
+ (1-\delta_0)\int_0^t |e^{\lambda_2 s^\rho}f(s)|^2_{2,+}
\\
\lesssim & (1+ \delta_0)\|f(0)\|_2^2+ \|\nabla \phi(0)\|_2^2+(1-\delta_0)\int_0^t |e^{\lambda_2 s^\rho}g(s)|^2_{2,-}\\
& + (\lambda_2 t^\rho+ t \sup_{0 \leq s \leq t}\| h(s)\|_\infty + C_\varepsilon t \sup_{0 \leq s \leq t}\| h(s)\|_\infty^2 )\cdot \sup_{0 \leq s \leq t}\|e^{\lambda_2 s^\rho} h(s)\|_\infty^2.
\end{align*}
Note that for $\lambda_2 < \lambda_0$,
\begin{align*}
\int_0^t |e^{\lambda_2 s^\rho}g(s)|^2_{2,-}
= & \int_0^t e^{2\lambda_2 s^\rho}\left\|\frac{w_{\widetilde{\vartheta}} g(s)}{w_{\widetilde{\vartheta}}}\right\|_{\gamma_-}^2\\
\lesssim & \int_0^t e^{2(\lambda_2- \lambda_0) s^\rho} e^{2\lambda_0 s^\rho}\|w_{\widetilde{\vartheta}} g(s)\|_\infty^2\\
\leq & \int_0^t e^{2(\lambda_2- \lambda_0) s^\rho}\mathrm{d}s \cdot
\sup_{0 \leq s \leq t}\|e^{\lambda_0 s^\rho} w_{\widetilde{\vartheta}} g(s)\|_\infty^2\\
\leq & (2(\lambda_0 -\lambda_2 ))^{-\frac{1}{\rho}}\Gamma \left(\frac{1}{\rho}+1\right)
\sup_{0 \leq s \leq t}\|e^{\lambda_0 s^\rho} w_{\widetilde{\vartheta}} g(s)\|_\infty^2\\
\lesssim & \sup_{0 \leq s \leq t}\|e^{\lambda_0 s^\rho} w_{\widetilde{\vartheta}} g(s)\|_\infty^2.
\end{align*}
We choose $\delta_0$ and $\varepsilon$ such that $M < \varepsilon < \delta_0 < \frac{1}{2}$, then it holds for $0 < \lambda_2 \ll 1$ that
\begin{align*}
\|e^{\lambda_2 t^\rho}f(t)\|_2 \lesssim  \|f(0)\|_2+ \|\nabla \phi (0)\|_2
+ \sup_{0 \leq s \leq t}\|e^{\lambda_0 s^\rho} w_{\widetilde{\vartheta}} g(s)\|_\infty
+ \left(\lambda_2^{\frac{1}{2}}t^{\frac{\rho}{2}}
+ t^{\frac{1}{2}}M^{\frac{1}{2}}\right) \sup_{0 \leq s \leq t}\|e^{\lambda_2 s^\rho} h(s)\|_\infty.
\end{align*}
Hence, for $0 < \lambda_m \leq \min\left\{\frac{\lambda}{2}, \lambda_2\right\}$, we have
\begin{align*}
& e^{-\frac{\lambda}{2}(t^\rho-s^\rho)}\|f(s)\|_2 \\
\leq & e^{-\lambda_m (t^\rho-s^\rho)}\|f(s)\|_2 \\
\lesssim  & e^{-\lambda_m t^\rho}\left(\|f(0)\|_2+ \|\nabla \phi (0)\|_2
+ \sup_{0 \leq s \leq t}\|e^{\lambda_0 s^\rho} w_{\widetilde{\vartheta}} g(s)\|_\infty
+ \left(\lambda_m^{\frac{1}{2}}t^{\frac{\rho}{2}}
+ t^{\frac{1}{2}}M^{\frac{1}{2}}\right) \sup_{0 \leq s \leq t}\|e^{\lambda_m s^\rho} h(s)\|_\infty\right).
\end{align*}
Therefore, for $M, \lambda_m \ll_T 1$, it holds
\begin{align*}
&\sup_{0 \leq s \leq t}\left\{e^{-\frac{\lambda}{2}(t^\rho-s^\rho)}\|f(s)\|_2\right\}\\
\lesssim & e^{-\lambda_m t^\rho}\left(\|f(0)\|_2+ \|\nabla \phi (0)\|_2
+ \sup_{0 \leq s \leq t}\|e^{\lambda_0 s^\rho} w_{\widetilde{\vartheta}} g(s)\|_\infty
+ o(1) \sup_{0 \leq s \leq t}\|e^{\lambda_m s^\rho} h(s)\|_\infty\right).
\end{align*}
Plugging it into (\ref{L infty}), we arrive at
\begin{align*}
e^{\lambda_m t^\rho}|h(t, x, v)| \lesssim & \|w_{\widetilde{\vartheta}}f(0)\|_\infty
+ \|f(0)\|_2+ \|\nabla \phi (0)\|_2 + \sup_{0 \leq s \leq t}\left\{e^{\lambda_0 s^\rho}\|w_{\widetilde{\vartheta}}g(s)\|_\infty \right\}\\
&+ o(1)  \sup_{0 \leq s \leq t}\left\{e^{\lambda_m s^\rho}\|h(s)\|_\infty \right\} +  \sup_{0 \leq s \leq t}\left\{e^{\lambda_m s^\rho}\|\nabla \phi(s)\|_\infty \right\}.
\end{align*}
In light of \cite{Kim}, we can control the last term on the right-hand side of the above inequality, with the following interpolation inequality. Actually, for $3< r< p$, it holds
\begin{align*}
\|\nabla \phi\|_\infty \lesssim \|\nabla \phi\|_{C^{0, 1-\frac{3}{r}}(\Omega)} \lesssim \|\nabla \phi\|_{W^{1, r}(\Omega)}.
\end{align*}
Note that
\begin{align*}
&\|\nabla \phi\|_{W^{1, 2}(\Omega)}\lesssim \left\|\int f \sqrt{\mu}\mathrm{d}v\right\|_{L^2(\Omega)}\lesssim e^{-\lambda_m t^\rho}\sup_{0 \leq s \leq t}\left\{e^{\lambda_m s^\rho}\|f(s)\|_{L^2_{x, v}}\right\},\\
&\|\nabla \phi\|_{W^{1, p}(\Omega)}\lesssim \left\|\int f \sqrt{\mu}\mathrm{d}v\right\|_{L^p(\Omega)}\lesssim e^{-\lambda_m t^\rho}\sup_{0 \leq s \leq t}\left\{e^{\lambda_m s^\rho}\|h(s)\|_{L^\infty_{x, v}}\right\}.
\end{align*}
For $0<\theta <1$, $\frac{1}{r}= \frac{\theta}{2}+ \frac{1-\theta}{p}$, we use the standard interpolation:
\begin{align*}
\|\nabla \phi\|_{W^{1, r}(\Omega)}&\lesssim \|\nabla \phi\|_{W^{1, 2}(\Omega)}^\theta \|\nabla \phi\|_{W^{1, p}(\Omega)}^{1-\theta}\\
& \lesssim e^{-\lambda_m t^\rho}\left(\sup_{0 \leq s \leq t}\left\{e^{\lambda_m s^\rho}\|f(s)\|_{L^2_{x, v}}\right\}\right)^\theta \left( \sup_{0 \leq s \leq t}\left\{e^{\lambda_m s^\rho}\|h(s)\|_{L^\infty_{x, v}}\right\}\right)^{1-\theta}.
\end{align*}
By using Young's inequality, we get
\begin{align*}
\sup_{0 \leq s \leq t}\left\{e^{\lambda_m s^\rho}\|\nabla \phi(s)\|_\infty \right\}\lesssim \sup_{0 \leq s \leq t}\left\{e^{\lambda_m s^\rho}\|f(s)\|_{L^2_{x, v}}\right\}+ o(1)\sup_{0 \leq s \leq t}\left\{e^{\lambda_m s^\rho}\|h(s)\|_{L^\infty_{x, v}}\right\}.
\end{align*}
In a result, we obtain
\begin{align*}
e^{\lambda_m t^\rho}|h(t, x, v)| \lesssim & \|w_{\widetilde{\vartheta}}f(0)\|_\infty
+ \|f(0)\|_2+ \|\nabla \phi (0)\|_2 + \sup_{0 \leq s \leq t}\left\{e^{\lambda_0 s^\rho}\|w_{\widetilde{\vartheta}}g(s)\|_\infty \right\}\\
&+ o(1)  \sup_{0 \leq s \leq t}\left\{e^{\lambda_m s^\rho}\|h(s)\|_\infty \right\},
\end{align*}
which implies that
\begin{align*}
\sup_{0 \leq s \leq t}\left\{e^{\lambda_m s^\rho}\|h(s)\|_\infty \right\} \lesssim & \|w_{\widetilde{\vartheta}}f(0)\|_\infty
+ \|f(0)\|_2+ \|\nabla \phi (0)\|_2 + \sup_{0 \leq s \leq t}\left\{e^{\lambda_0 s^\rho}\|w_{\widetilde{\vartheta}}g(s)\|_\infty \right\}\\
&+ o(1)  \sup_{0 \leq s \leq t}\left\{e^{\lambda_m s^\rho}\|h(s)\|_\infty \right\}.
\end{align*}

In conclusion, it is shown that
\begin{align*}
\sup_{0 \leq t \leq T}\left\{e^{\lambda_m t^\rho}\|h(t)\|_\infty \right\} \lesssim & \|w_{\widetilde{\vartheta}}f(0)\|_\infty + \|\nabla \phi (0)\|_2 + \sup_{s\geq 0}\left\{e^{\lambda_0 s^\rho}\|w_{\widetilde{\vartheta}}g(s)\|_\infty \right\}\leq  \mathcal{C}\delta^* M.
\end{align*}
Choosing $\delta^*$ such that $\mathcal{C}\delta^* \ll 1$, we conclude that $\sup_{0 \leq t \leq T}\left\{e^{\lambda_m t^\rho}\|h(t)\|_\infty \right\}$ actually cannot attain $M$. With the bootstrap argument, the solution $f$ can be extended until there is some $T_1>T$ too large to satisfy $\sqrt{M}e^{CT_1^2}\ll 1$. It means that, for any given (large enough) $T>0$, we choose a sufficiently small $M>0$ such that it holds $\sqrt{M}e^{CT^2}\ll 1$, then the local solution can be extended to $T$.
Hence, the proof of Theorem \ref{Thm1} is completed.
\hfill $\square$

\smallskip\medskip

{\bf Acknowledgements}: The authors are very grateful to the nice reviewer for his/her  constructive suggestions.
This work  is supported by NSFC (Grant No. 12071212) and A Project Funded by the Priority Academic Program Development of Jiangsu Higher Education Institutions.

\end{document}